\newcommand{\CP}{\mathbb{CP}}
\newcommand{\C}{\mathbb{C}}
\newcommand{\R}{\mathbb{R}}
\newcommand{\T}{\mathbb{T}}
\newcommand{\Z}{\mathbb{Z}}
\newcommand{\bD}{\mathbb{D}}
\newcommand{\cX}{\mathcal{X}}
\DeclareMathOperator{\std}{std}
\newcommand{\om}{\omega}
\newcommand{\Om}{\Omega}
\DeclareMathOperator{\Symp}{Symp}
\DeclareMathOperator{\Diff}{Diff}
\DeclareMathOperator{\Hom}{Hom}
\DeclareMathOperator{\id}{id}
\DeclareMathOperator{\PD}{PD}
\newtheorem{thm}{Theorem}[section]
\newtheorem{prop}[thm]{Proposition}
\newtheorem{exmp}[thm]{Example}
\newtheorem{cor}[thm]{Corollary}
\newtheorem{rmk}[thm]{Remark}
\newtheorem{defn}[thm]{Definition}
\newtheorem{lemma}[thm]{Lemma}
\newtheorem{ques}{Question}
\newtheorem{pblm}{Problem}
\title{Almost toric fibrations on K3 surfaces via degenerations}
\author{Pranav Chakravarthy and Yoel Groman}
\subjclass[2020]{Primary 14H70, Secondary 14J28,14J32,14D06}
\keywords{K3 surfaces, Almost toric fibrations, Integrable systems, Symplectic topology, Mirror symmetry}
\begin{document}

\begin{abstract}
For K\"ahler K3 surfaces we consider Kulikov models of type III  tamed by a symplectic form. Our main result shows that the generic smooth fiber admits an almost toric fibration over the intersection complex, which inherits a natural nodal integral affine structure from almost toric fibrations of the boundary divisors. We prove that a smooth anti-canonical hypersurface in a smooth toric Fano threefold, equipped with a toric Kähler form, admits a symplectic Kulikov model. Moreover, we demonstrate that the induced integral affine structure on the intersection complex is integral affine isomorphic (up to nodal slides) to the nodal integral affine structure considered by Gross and Siebert on the boundary of the moment polytope. 
\end{abstract} 
\maketitle
\tableofcontents

\section{Introduction}
\subsection{Statement of the main result}
A \emph{K3 surface} is a closed complex analytic surface which is simply connected and has trivial canonical bundle. Consider a  family  $$\pi: \cX \rightarrow D^*_\epsilon$$ over a punctured open disk $D^*_\epsilon:=\{z \in \C~|~ 0<|z| < \epsilon\}$ whose fiber over any point is a smooth K3 surface. A Theorem by Kulikov-Persson-Pinkham \cite{Kulikov,PerssonPinkham} states that $\pi$ can be extended to $D_\epsilon$  as a \emph{Kulikov model}. 

\begin{defn}(Kulikov model)
We call a family of K3 surfaces $$\pi: \cX \rightarrow D$$ over an open disk $D$ a \emph{Kulikov model} if the total space $\cX$ is smooth, has trivial canonical bundle, and the central fibre $\cX_0 = \bigcup_i X_i$ is  a reduced normal crossings divisor. The central fibre $\cX_0$ is called the \emph{Kulikov surface.}
\end{defn}

To the central fiber $\cX_0$ is associated the \emph{dual intersection complex}. This is a simplicial complex in which the $k$-simplices are $(k+1)$-fold intersections $X_I$ of the components of $\cX_0$ and the boundary faces of the simplex $X_I$ are the simplices $X_J$ with $J\subset I$ and $|J|=|I|-1$.

We say that the degeneration is \emph{maximal} if the monodromy around the origin of the middle homology of the fiber is maximally unipotent. In this case,  the  Kulikov model is of \emph{type III}. This means the dual intersection complex is homeomorphic to $S^2$. Moreover, each component $X_i$ of $\cX_0$ is a rational surface and the double curves  $X_{ij}:=X_i\cap X_j$ occurring on a given component form an anti-canonical cycle. For a reference see \cite{Huybrechts}. 

To study the symplectic topology of a K3 surface via its degeneration we introduce the following notion.
\begin{defn}\label{Defn:AdmissibleKulikovModel}
    A \emph{symplectic Kulikov model} $\pi:\cX \rightarrow D$ of Type III, is  a Kulikov model of type III endowed with 
    \begin{itemize}
        \item a symplectic form $\omega$ taming the complex structure on $\cX$ and such that at each point of triple intersection $X_{ijk}$ the 3 submanifolds $X_{ij}$, $X_{jk}$ and $X_{ki}$ are symplectically orthogonal, and $\omega$ is compatible with the complex structure in an open neighborhood of each point of $X_{ijk}$ in $\cX$, and, 
        \item for each $i,$ an almost toric fibration (ATF) $\mu_i:(X_i,\om)\to B_i$ whose almost toric boundary consists of the cycle $\sum_j X_{ij}.$
    \end{itemize}
\end{defn}

\begin{rmk}
According to a recent result by \cite{Li-Min-Ning} such almost toric fibrations exist for symplectic log Calabi-Yau pairs $(X_i,\sum_j X_{ij})$. 
\end{rmk}

A symplectic manifold $(X,\om)$ is said to admit a symplectic Kulikov model of Type III if $(X,\om)$ is symplectomorphic to the fibre of a degeneration $\pi: \cX \to D$ satisfying the properties of Definition~\ref{Defn:AdmissibleKulikovModel}. 
\begin{rmk}
    The compatibility assumption near the vertices is used in \S\ref{SubsecTransportContinuity} to establish continuity of the symplectic parallel transport up to the central fiber for points flowing into the triple intersections. For points flowing into lower strata of the central fiber we do not know how to prove such continuity when the symplectic form is merely tame. Since we do not require compatibility everywhere, we have to do without continuity everywhere of parallel transport. This somewhat complicates the argument. 
    Our reason for allowing symplectic forms that merely tame the total space is that the forms we construct in \S\ref{Sec:KulikovfromFano} for small resolutions are not compatible with $J$ on certain regions away from the triple intersections. See Remark \ref{Remark:Modifiedform_SymplecticOrthogonal} for further discussion. 
\end{rmk}

With a type III degeneration of K3 surfaces, we can also associate a cell complex, the \emph{intersection complex} $\Delta_{\cX}$, as follows. We have a $0$-cell for each triple intersection. We connect two distinct $0$-cells by a $1$-cell if there is a double curve containing them.  By the previous paragraph the edges formed in this way are in bijection with the double curves. Finally, we attach a $2$-cell to any cycle in this graph whose corresponding cycle of curves occurs as the boundary divisor of a component of $\cX_0$. Note that $\Delta_{\cX}$ is dual to the dual intersection complex $\Delta^*_{\cX}$ of the Kulikov surface $\cX_0$. It is therefore also homeomorphic to a $2$-sphere.

To proceed, we introduce an integral affine structure with nodal singularities on the intersection complex $\Delta_{\cX}$.

\begin{lemma}\label{lmIntAffStr}
    Given a symplectic Kulikov model $\cX$ of type III there is a nodal integral affine structure on the intersection complex $\Delta_{\cX}$ which is smooth in a neighborhood of the $1$-skeleton and whose restriction to the 2-cell in $\Delta_{\cX}$ associated with the component $X_i$ is integral affine isomorphic to $B_i$. 
\end{lemma}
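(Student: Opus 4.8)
The plan is to build the nodal integral affine structure on $\Delta_{\cX}$ by gluing the base spaces $B_i$ of the given almost toric fibrations along the combinatorial pattern of $\Delta_{\cX}$, and then to check smoothness along the $1$-skeleton. Since the almost toric boundary of $\mu_i$ is the anti-canonical cycle $\sum_j X_{ij}$, the boundary polygon $\partial B_i$ has exactly one edge for each double curve $X_{ij}\subset X_i$ and one vertex for each triple point on $X_i$, the two edges meeting at such a vertex corresponding to the two double curves through that triple point. This is precisely the pattern along which the $2$-cell of $X_i$ is attached in $\Delta_{\cX}$, so there is a tautological homeomorphism $\Delta_{\cX}\cong\bigl(\bigsqcup_i B_i\bigr)/\!\sim$ matching $2$-cells with the $B_i$, edges with double curves, and vertices with triple points. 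On the interior of the $2$-cell of $X_i$ I would take the integral affine structure of $\operatorname{int}B_i$; because $\mu_i$ is genuinely toric in a neighbourhood of $\partial B_i$, all focus--focus fibres, hence all nodes, sit a definite distance from the $1$-skeleton, which is what ``smooth near the $1$-skeleton'' requires.

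It then remains to check that these structures glue smoothly across the edges and agree at the vertices. Fix an edge corresponding to a double curve $X_{ij}$, a rational curve of symplectic area $A=\int_{X_{ij}}\om$. Near the interior of this edge both $B_i$ and $B_j$ are toric, so each carries a collar integral affine isomorphic to a standard half--strip $[0,\varepsilon)\times[0,A]\subset\R^2$ with the edge at $\{0\}\times[0,A]$; here I use the Duistermaat--Heckman description, which identifies the affine coordinate along the edge with the moment map of the Hamiltonian circle action on $X_{ij}\cong S^2$ obtained by restricting $\mu_i$ (resp.\ $\mu_j$), and the affine length of the edge with $A$, on both sides. Since all Hamiltonian circle actions on $(S^2,\om)$ are conjugate, the two parametrisations of the edge agree after fixing orientations, and the two collars are glued by the reflection $(\rho,s)\mapsto(-\rho,s)$, an element of $\GL_2(\Z)$; thus the two standard half--strips assemble to a single standard strip and the glued affine structure is smooth and integral along every edge.

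At a triple point $P=X_{ijk}$ three $2$-cells meet, contributing the three smooth corners of $B_i,B_j,B_k$ at $P$ (smooth because the two double curves through $P$ meet transversally, so the corresponding cones are unimodular). To see these glue to a smooth integral affine point I would develop a punctured neighbourhood of $P$ into $\R^2$: the three edge rays and three wedges must tile a neighbourhood of the origin with integral affine transitions across the rays, and a short computation shows that the only configuration of three unimodular integral affine corners doing this is the fan of $\CP^2$; in particular the monodromy around $P$ is trivial, so the structure extends smoothly over $P$. Conceptually this is the trivalent--vertex analysis of Gross--Siebert, and it is cleanest to organize it through a local model of the degeneration near $P$. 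The local structure of a reduced normal crossings degeneration makes a neighbourhood of $P$ in $\cX$ biholomorphic to a polydisc in $\C^3$ with $X_i=\{z_1=0\}$, $X_j=\{z_2=0\}$, $X_k=\{z_3=0\}$, $\pi=z_1z_2z_3$ and the double curves the coordinate axes; using that $\om$ tames the complex structure and that $X_{ij},X_{jk},X_{ki}$ are symplectically orthogonal at $P$ one can moreover arrange $\om$ to be the standard $\tfrac i2\sum_\ell dz_\ell\wedge d\bar z_\ell$ and the $\mu_\ell$ to be the toric corner maps $(z_a,z_b)\mapsto(\tfrac12|z_a|^2,\tfrac12|z_b|^2)$, which exhibits the three corners as the $\CP^2$ fan directly.

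Putting the three steps together yields a nodal integral affine structure on $\Delta_{\cX}$ that is smooth near the $1$-skeleton and restricts to $B_i$ on the $2$-cell of $X_i$. I expect the real work to be in the vertex step --- specifically in producing the local symplectic model at the triple points compatibly with the almost toric fibrations, a relative Darboux/Moser statement where the taming and symplectic--orthogonality hypotheses are used --- while the combinatorial bookkeeping and the edge gluing are routine. One should also check that the shear ambiguity inherent in the edge gluings can be chosen coherently around each vertex, which again is immediate from the local model.
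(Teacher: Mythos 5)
Your gluing of the $B_i$ and your analysis at the triple points parallel what the paper does (Proposition \ref{propPcwsNAffine} and the canonical vertex structure of Definition \ref{dfCanIntAff}), but the edge step contains a genuine gap, and it is exactly where the content of the lemma lies. The identification of a collar of an edge in $B_i$ with a standard half-strip $[0,\varepsilon)\times[0,A]$ is canonical only up to an integral shear fixing the edge: the lattice distance to the edge is canonical, but an affine coordinate along the edge extends to the collar only modulo adding multiples of that distance. So ``glue the two collars by the reflection'' does not single out an extension of the affine structure across the open edge; there is a $\Z$-torsor of such extensions, every one of them smooth on the interior of the edge, and nothing has been verified at this stage. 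What pins the extension down is the vertex model: as you note (and as is forced, once a smooth structure at the vertex is required), the three unimodular corners at a triple point must assemble into the fan of $\CP^2$, and this determines the germ of the extension across each edge near each of its two endpoints. The real assertion of the lemma --- proved in the paper as Theorem \ref{tmNAffine} --- is that the extension forced at one endpoint of an edge agrees with the one forced at the other endpoint. This is a global condition along the edge, not something ``immediate from the local model'': your argument, as written, would apply verbatim to the boundary of a Delzant polytope fibred by its toric facets, where the conclusion is false and one must insert nodes in the interiors of the edges (this is precisely the Gross--Siebert structure $\mathcal{A}_{GS}$; see Remark \ref{rmkNoMonodromy} and the Gauss--Bonnet obstruction cited there). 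The fact that your proof never uses the defining property of a Kulikov model --- triviality of the canonical bundle of the total space $\cX$ --- is the tell-tale sign that a hypothesis-bearing step is missing.

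The paper closes this gap as follows: near an edge $e_{ij}$ it considers the rank-$3$ lattice of piecewise integral linear functions on a two-sided neighborhood, shows that each such function $f$ induces a circle action on $T\cX|_{X_{ij}}$ (by gluing the linearized Hamiltonian circle actions on the two sides), and defines a weight $wt(f)\in\Z$ as the degree of $\theta\mapsto\arg\Omega\bigl(\rho_f(\theta)\cdot(v_1\wedge v_2\wedge v_3)\bigr)$, where $\Omega$ is the holomorphic volume form on $\cX$. The kernel of $wt$ is the rank-$2$ lattice declared to consist of the affine functions along the whole edge, and compatibility with the canonical vertex structure follows because the latter's affine functions are exactly those whose induced action preserves the standard volume form on $\C^3$. (Equivalently, the obstruction you are ignoring is governed by the self-intersections of $X_{ij}$ in the two adjacent components, and its vanishing is the triple point formula for a type III Kulikov surface.) To repair your proof you would need to replace the ``reflection gluing'' by a construction of this kind, or verify the numerical edge condition directly from $K_\cX\cong\mathcal{O}_\cX$; choosing shears ``coherently around each vertex'' is easy and not the issue.
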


This is proven below as Theorem \ref{tmNAffine}. Our aim is to prove the following theorem.

\begin{thm}\label{Thm:LTFongoodKulikov}
The smooth fibre $\cX_t:=\pi^{-1}(t),t\neq 0$, endowed with the symplectic form $\omega|_{\cX_t}$ admits an almost toric fibration $\mu: \cX_t\to \Delta_{\cX}$ inducing the nodal integral affine structure of Lemma \ref{lmIntAffStr}.
\end{thm}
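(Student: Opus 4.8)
I need to prove that the generic smooth fiber $\cX_t$ admits an almost toric fibration over the intersection complex $\Delta_{\cX}$, inducing the nodal integral affine structure from Lemma \ref{lmIntAffStr}. The key structure: near $\cX_0 = \bigcup X_i$, the total space looks like a normal crossings degeneration, and each $X_i$ has an ATF $\mu_i: X_i \to B_i$ with boundary the anticanonical cycle $\sum_j X_{ij}$.

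**Strategy.** The natural approach: construct the ATF on $\cX_t$ piece by piece, gluing together "thickened" versions of the ATFs $\mu_i$ on the components.

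First I would understand the local model near a triple point $X_{ijk}$. Near such a point, in suitable holomorphic coordinates, $\cX \to D$ looks like $(z_1, z_2, z_3) \mapsto z_1 z_2 z_3 = t$. The taming symplectic form, with the symplectic orthogonality condition, should be close to a product form, so that the fiber $\{z_1z_2z_3 = t\}$ carries a semi-toric/toric structure. Over the corner of $B_i$ corresponding to $X_{ijk}$, the map $\mu_i$ looks like the standard moment map $(z_1, z_2) \mapsto (|z_1|^2, |z_2|^2)$ (roughly), since the ATF has toric behavior near the boundary. I would build a local model for an ATF of $\{z_1z_2z_3 = t\}$ over the corresponding neighborhood in $\Delta_{\cX}$ — this is a "pair of pants" region where three 2-cells meet at a vertex — compatible with the three ambient moment maps.

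Second, over a neighborhood of a double curve $X_{ij}$ (away from triple points): here $\cX \to D$ is locally $(z_1, z_2) \mapsto z_1 z_2 = t$, with $z_1=0$ defining $X_i$ and $z_2 = 0$ defining $X_j$. The fiber is locally $\C^* \times (\text{something})$, and the gluing of $B_i$ to $B_j$ along the edge corresponding to $X_{ij}$ is governed by the monodromy/transition data. The ATFs $\mu_i, \mu_j$ restricted near the edge should be made to match up; since $X_{ij}$ is a smooth curve with a moment map to an interval (from both $\mu_i$ and $\mu_j$ restricted to it), I need these to agree and then thicken into the fiber $\cX_t$.

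Third, over the interior of a 2-cell corresponding to $X_i$: here I need to see the fiber $\cX_t$ near $X_i$ (minus a neighborhood of its boundary curves) as a symplectic fiber bundle — essentially a neighborhood of $X_i$ in $\cX$ intersected with $\cX_t$ — and transport the ATF $\mu_i$ through a symplectomorphism from this region to (a neighborhood of the zero section in) the total space, pushed down. The symplectic neighborhood theorem for $X_i \subset \cX$ (a divisor with normal crossings, with the boundary curves), and a Moser-type argument to identify $\cX_t$ near $X_i$ with $X_i$ itself (minus boundary neighborhoods), should give this.

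**Gluing and the main obstacle.** Having built the fibration over each 2-cell, each edge neighborhood, and each vertex neighborhood, I would patch them using a partition of unity in the base $\Delta_{\cX}$, Moser's argument to make the symplectic forms agree on overlaps, and check that the nodal singularities (the focus-focus points coming from the nodes in each $B_i$, plus possibly new nodes appearing along edges from the "eigenray" matching) are consistent. I expect the main obstacle to be the \textbf{gluing over the edges and especially the vertices}: ensuring that the three local ATFs near a triple point $X_{ijk}$ can be simultaneously matched to the three 2-cell fibrations, since this requires the integral-affine transition data of Lemma \ref{lmIntAffStr} to be exactly realized by an honest symplectic gluing — i.e., producing a symplectomorphism of the relevant piece of $\cX_t$ intertwining two different "broomstick"/almost-toric models that differ by the prescribed affine monodromy. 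Controlling the symplectic form in the degenerating family near $\cX_0$ (where it merely tames, rather than being compatible with, the complex structure) so that these local models are genuinely valid, is the technical heart; I would likely invoke a normalization of $\omega$ near $\cX_0$ (using the symplectic orthogonality of the triple intersections and a Moser argument) to reduce to a standard local form before carrying out the gluing.
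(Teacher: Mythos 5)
Your overall plan---thicken the ATFs $\mu_i$ of the components into the smooth fiber, with the local models $z_1z_2z_3=t$ at vertices and $z_1z_2=t$ along double curves---matches the spirit of the paper's \emph{preparatory step}, which realizes it via symplectic parallel transport (Ruan's idea): after interpolating the $\mu_i$ to agree with the standard moment map near each triple point and modifying $\pi$ so that it equals $z_1z_2z_3$ in an admissible Darboux chart there, the torus-invariance of the local Hamiltonians makes $\mu_0\circ P_\gamma$ extend smoothly over the interiors of the $2$-cells \emph{and} over neighborhoods of the vertices. But there is a genuine gap in how you propose to finish: the remaining locus is a finite union of intervals $A$ in the interiors of the edges, i.e.\ the points of $\cX_t$ that flow into the double curves away from the triple points, and there is no local normal form there. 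Your plan to ``patch using a partition of unity in the base and Moser's argument'' cannot work in that region: Lagrangian torus fibrations cannot be averaged by a partition of unity, and a Moser/normalization argument presupposes that you already know the preimage of a rectangle $R$ around such an interval is symplectomorphic (indeed even diffeomorphic) to the standard $R\times\T^2$ in a way compatible with the fibration near $\partial R$ --- and that is precisely the hard content, not something the tameness-plus-orthogonality hypotheses give you locally. The paper proves it only by global four-dimensional rigidity results: Wendl's classification of minimal strong fillings of $(\T^3,\xi_{\std})$ to pin down the diffeomorphism type of $\mu_1^{-1}(R)$, then compactification by symplectic cut to $S^2\times S^2$ together with the McDuff--Lalonde classification of symplectic forms and the symplectic Torelli theorem for orthogonal log Calabi--Yau divisors to get the symplectomorphism, followed by an interpolation statement (Proposition \ref{prpATFinterpolation}) to match the fibrations near $\partial R$. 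Triviality of the affine monodromy around these intervals (needed even to have a rectangular chart $R$) is also an output of the preparatory step, not automatic.

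A secondary misjudgment: you single out the vertices as the technical heart, but in the paper the vertex regions are exactly where soft methods suffice (symplectic orthogonality gives an admissible chart, Lemma \ref{Lemma:InterpolatingtorusfibrationonC^2} interpolates the torus actions, and Lemma \ref{Lemma:Ruansidea} transports the standard fibration of $\{z_1z_2z_3=t\}$), while the edge interiors are where the argument is genuinely hard. Relatedly, your assertion that the taming form is ``close to a product form'' near the strata is unjustified and is never needed in that form by the paper; what is used instead is the continuity of parallel transport under the normal-crossings gradient estimate and the modification of $\pi$ to $\tilde\pi$ satisfying the hypotheses of Lemma \ref{lmTrasprtContinuity2}. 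As written, your proposal would not close the argument over the $1$-skeleton.
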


\begin{rmk}
    The dual intersection complex, $\Delta^*$ also carries a triangulated integral affine structure with singularities. See, e.g., \cite{Engel2018}. 
    While the cells in $\Delta_{\cX}$ are dual to those in $\Delta^*$, it is not entirely clear how to relate the integral affine structures. For example, applying a discrete Legendre transform to $\Delta^*$  produces an IAS on $\Delta_{\cX}$ with all the nodes of a given cell occurring at one point. However, this holds, up to nodal slides, if and only if 
   the restriction of $\omega$ to each component of the central fiber is exact on the interior.   
\end{rmk}

\subsection{Hypersurfaces in smooth toric Fano}
We discuss the case of the degeneration of an anti-canonical hypersurface in a smooth toric Fano threefold $X$. Namely, we let $s_0,s_1$ be sections of the anti-canonical bundle so that $s_0^{-1}(0)$ is the sum of the toric divisors and $s_1$ is generic. Consider the family $t\mapsto V(s_0+ts_1)\subset X$. The total space $E$ of this degeneration is singular at double points occurring in the $1$-dimensional toric stratum and thus doesn't fit into the framework of Definition \ref{Defn:AdmissibleKulikovModel}. In the complex category, one can carry out a small blow up at each of the double points to obtain a Kulikov model of type III. This blow-up procedure is not unique. Indeed, for any of the $24$ double points we have a binary choice between two toric divisors in which to carry out the small blow up. 

In the symplectic category, the possibility of small blow up is much more constrained. By the results of \cite{Conifold}, only for a small number of the possible small resolutions does there exist a symplectic form taming the complex structure on the total space of the degeneration. Moreover, if we start out with a symplectic form $\omega$ on the total space, the symplectic crepant resolution involves a global modification of the symplectic form which affects the generic fiber $V$. This modification can be made arbitrarily small, but not $0$. Thus, if we wish to fix the symplectic form $\omega$ on a smooth fiber $V$, it is a delicate question whether or not $(V,\omega)$ admits a symplectic Kulikov model.

We turn to formulate a result in this direction. Given a toric K\"ahler form $\omega$ on the ambient space $X$, let $P_{\omega}$ be the associated moment polytope. On $\partial P_{\omega}$ there is a nodal integral affine structure $\mathcal{A}_{GS}$ considered in the Gross-Siebert program. It extends the standard piecewise affine structure on $\partial P_{\omega}$ with nodal singularities occurring in the interiors of the $1$-skeleton. See \S\ref{SecGrossSiebert}  for details. 

\begin{thm}\label{thmToricKulikov}
  Let $V$ be a smooth anti-canonical hypersurface in a smooth toric Fano 3-fold $X$. Then the restriction of any toric K\"ahler form admits a symplectic Kulikov model of Type III. The integral affine structure induced by Lemma \ref{lmIntAffStr} on the associated intersection complex is integral affine isomorphic up to nodal slides with $(\partial P_{\omega},\mathcal{A}_{GS}).$
\end{thm}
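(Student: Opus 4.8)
The plan is to prove Theorem~\ref{thmToricKulikov} in two stages: first construct the symplectic Kulikov model, then identify the induced integral affine structure on the intersection complex with $(\partial P_\omega, \mathcal{A}_{GS})$ up to nodal slides.

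\textbf{Constructing the symplectic Kulikov model.} Start from the singular total space $E$ of the degeneration $t\mapsto V(s_0+ts_1)$ inside $X$, whose $24$ singularities are ordinary double points lying in the $1$-dimensional toric stratum. The ambient toric K\"ahler form $\omega_X$ restricts to a symplectic (in fact K\"ahler) form on the smooth locus of $E$. Near each node, $E$ looks holomorphically like the affine cone $\{xy=zt\}\subset\C^4$, and resolving it requires replacing a neighborhood by a piece of the small resolution $\mathcal{O}(-1)\oplus\mathcal{O}(-1)\to\CP^1$. By \cite{Conifold} (the ``Conifold transition'' analysis cited in the introduction) a taming symplectic form on the resolved total space exists precisely for the admissible global choices of the $24$ binary blow-up directions, and the resolution modifies $\omega_X$ only in arbitrarily small neighborhoods of the vanishing cycles; crucially the resulting form still tames the complex structure on $\cX$. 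I would carry out this resolution choosing, at each of the $24$ nodes, the blow-up direction dictated by which of the two adjacent toric divisors is being ``pushed off''; checking that one global such choice is symplectically admissible is exactly what \cite{Conifold} provides. This yields $\pi:\cX\to D$ with $\cX$ smooth, $K_\cX$ trivial, and central fiber $\cX_0=\bigcup_i X_i$ a reduced normal crossings union of (blow-ups of) toric surfaces — the components $X_i$ being the proper transforms of the toric divisors $D_i\subset X$, each blown up at the nodes assigned to it. The symplectic orthogonality at triple points and the existence of almost toric fibrations $\mu_i:X_i\to B_i$ with the required anti-canonical cycle as almost toric boundary follow from \cite{Li-Min-Ning} applied to each $X_i$ (the Chern class pairs positively since each $X_i$ is a symplectic rational surface with the restricted form), so all conditions of Definition~\ref{Defn:AdmissibleKulikovModel} are met.

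\textbf{Identifying the affine structure.} Each toric divisor $D_i\subset X$ is itself a toric surface, and the restriction of $\omega_X$ is a toric K\"ahler form whose moment polytope is the corresponding facet $F_i\subset\partial P_\omega$; the standard toric moment map $X_i^{\mathrm{toric}}\to F_i$ is the basic example of an ATF. Blowing up $X_i$ at the assigned nodes and taking an ATF adapted to these blow-ups is, on the base, precisely a \emph{nodal trade}: one replaces a corner of $F_i$ (or an edge point) by a focus-focus singularity, introducing a nodal point and an invisible monodromy cut, exactly as in the Symington/Leung--Symington almost toric dictionary. The combinatorial data of which corner of which facet gets traded is governed by the blow-up direction chosen at the corresponding node. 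On the other hand, $\mathcal{A}_{GS}$ on $\partial P_\omega$ is, by its description in \S\ref{SecGrossSiebert}, obtained from the piecewise-linear structure on $\partial P_\omega$ by precisely the same operation: placing a nodal singularity in the interior of each $1$-cell (equivalently, trading a corner) in a way bookkept by the same $24$ binary choices. Thus, facet by facet, the restriction of $\mathcal{A}_{GS}$ to $F_i$ and the affine structure $B_i$ coming from $\mu_i$ differ only by the position of the nodal points along the cuts, i.e.\ by nodal slides, and the gluing along the $1$-skeleton matches because in a neighborhood of the $1$-skeleton both structures are the standard (smooth, singularity-free) piecewise-linear structure of $\partial P_\omega$. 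Invoking Lemma~\ref{lmIntAffStr}, which says the induced structure on $\Delta_\cX$ restricts to $B_i$ on each $2$-cell and is smooth near the $1$-skeleton, we conclude the two nodal integral affine surfaces are isomorphic up to nodal slides.

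\textbf{Main obstacle.} The genuinely delicate point is the \emph{symplectic} existence of the small resolution with a taming form that simultaneously restricts to the \emph{given} toric K\"ahler form away from small neighborhoods — the introduction emphasizes that, unlike in the complex category, not every binary choice works and the modification cannot be made to vanish. Threading this through requires care in matching the local conifold model's symplectic parameters (the sizes of the exceptional $\CP^1$'s) with the global cohomological constraint coming from $[\omega_X]$, and in verifying that after these local modifications the ATFs on the components still glue consistently across triple points; this is where I would lean most heavily on \cite{Conifold} and on a careful local analysis near the $24$ vanishing cycles. A secondary subtlety is bookkeeping the nodal-slide ambiguity precisely enough to match conventions with the Gross--Siebert side of \S\ref{SecGrossSiebert}, but that is combinatorial rather than analytic.
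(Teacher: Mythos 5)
Your outline follows the same broad strategy as the paper (small resolution of the $24$ ordinary double points, a taming form via \cite{Conifold}, ATFs on the components via almost toric blow-ups, comparison with $\mathcal{A}_{GS}$), but both halves have genuine gaps. In the first half, your claim that the resolution "modifies $\omega_X$ only in arbitrarily small neighborhoods of the vanishing cycles" is false, and it is precisely the difficulty the theorem must overcome: the taming form on the small blow up has the shape $\pi_{sb}^*\omega+t\alpha_D$, where $\alpha_D$ is Poincar\'e dual to a global $4$-cycle $D$ with $[D]\cdot[C]>0$ for every exceptional curve $C$, so the class induced on every regular fiber changes by a nonzero amount. To invoke \cite{Conifold} you must actually exhibit such a cycle for a concrete global choice of the $24$ binary blow-up directions; the paper does this with the "greedy" scheme (order the facets and blow each unresolved node into the earliest adjacent divisor) together with the cycle $\Sigma_{\vec t}=\sum_i t_i\tilde{\Sigma}_i$ built from proper transforms of the base-locus components with strictly decreasing weights, for which $[\Sigma_{\vec t}]\cdot[C]=t_i-t_j>0$ (Lemma \ref{lmIntersection}). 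Your "pushed off" prescription specifies neither the global choice nor the positivity cycle, so the hypothesis of Lemma \ref{Lemma:Family_of_Form_small_blowup} is unverified. Moreover, to recover the \emph{given} form on $V$ one must first shrink the ambient toric class by $\vec t$, so that $\tilde{\pi}^*\omega_{\vec{\lambda}-\vec t}+\sum_i t_i\alpha_i$ restricts to the fiber in the class of $\omega_0|_V$, and then use that cohomologous forms taming the same complex structure are symplectomorphic (Lemma \ref{lmArbitraryToricClass}); you identify this as the main obstacle but do not resolve it. (Also, symplectic orthogonality at triple points does not follow from \cite{Li-Min-Ning}; in the paper it follows because the correction form is supported away from the codimension-two strata, Remark \ref{Remark:Modifiedform_SymplecticOrthogonal}.)

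In the second half, the facet-by-facet matching is asserted rather than proved, and some of the assertions are wrong. $\mathcal{A}_{GS}$ involves no binary choices: its nodes sit at the midpoints of the edges with shear along the edge, whereas the nodes of the structure induced by Lemma \ref{lmIntAffStr} sit in the interiors of the $2$-cells and their number and position depend on the greedy choices; part of what makes the theorem noteworthy is that the outcome is nevertheless independent of those choices. The faces $B_i$ are not the facets $F_i$ but almost toric blow-ups whose edge lengths are altered by the parameters $t_i$, and near the $1$-skeleton neither structure is "the standard PL structure of $\partial P_\omega$": both are smooth unfoldings, determined on the Gross--Siebert side by the canonical vertex charts and on the Kulikov side by the weight-zero condition of Theorem \ref{tmNAffine}. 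The actual content is to show that the monodromy-invariant segments through the interior nodes of each face form a polygon at the correct affine distance from the boundary and that, across each edge, the two structures develop onto the same affine half-planes; the paper establishes this by an induction over hybrid partial resolutions using developing maps (Proposition \ref{prpGSIAStr}), and this is the bulk of the argument that your proposal replaces by the assertion that the two structures "differ only by nodal slides."
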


\begin{cor}\label{corToricKulikov}
     For any ambient toric form $\omega$, a smooth anticanonical hypersurface $V$, equipped with the restriction of $\omega$, admits an almost toric fibration over $(\partial P_{\omega},\mathcal{A}_{GS}).$
\end{cor}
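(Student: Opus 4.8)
The plan is to deduce this from Theorems~\ref{thmToricKulikov} and~\ref{Thm:LTFongoodKulikov} together with the fact that nodal slides can be realized at the level of almost toric fibrations. First I would realize a generic smooth anti-canonical hypersurface $V\subset X$ as a smooth fibre of the degeneration $t\mapsto V(s_0+ts_1)$ discussed above: writing the defining section of $V$ as $s_0+s_1$ with $s_0$ cutting out the sum of the toric divisors and rescaling the parameter, one has $V=\pi^{-1}(t_0)$ for some $t_0\in D_\epsilon^*$, and for $V$ in the complement of a discriminant hypersurface in the anti-canonical linear system the fibre over $t_0$ is smooth and $t_0$ lies in the locus of generic smooth fibres. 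Applying Theorem~\ref{thmToricKulikov}, the restriction $\om|_V$ of the chosen toric K\"ahler form admits a symplectic Kulikov model $\pi:\cX\to D$ of type III with $(\cX_{t_0},\om|_{\cX_{t_0}})$ symplectomorphic to $(V,\om|_V)$.

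Next I would feed this symplectic Kulikov model into Theorem~\ref{Thm:LTFongoodKulikov}: the generic smooth fibre, in particular $(V,\om|_V)$, then carries an almost toric fibration $\mu:V\to\Delta_{\cX}$ inducing the nodal integral affine structure of Lemma~\ref{lmIntAffStr} on the intersection complex. By the second assertion of Theorem~\ref{thmToricKulikov} this integral affine structure is, after a finite sequence of nodal slides, integral affine isomorphic to $(P_\om,\mathcal{A}_{GS})$. It remains to promote this to a statement about the fibration. Each nodal slide is realized by an explicit modification of an almost toric fibration, moving a node along its eigenray, which leaves the symplectomorphism type of the total space unchanged; performing the corresponding sequence of such modifications on $\mu$ and then composing with the integral affine isomorphism yields an almost toric fibration $V\to(P_\om,\mathcal{A}_{GS})$.

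The main point requiring care is the realization step in the first paragraph: one must ensure not merely that $V$ is an abstract smooth member of the anti-canonical system, but that $(V,\om|_V)$ genuinely occurs as the generic fibre of the symplectic Kulikov model produced in Theorem~\ref{thmToricKulikov}, with the correct cohomology class so that a Moser argument identifies the ambient symplectic form restricted to the fibre with $\om|_V$. This, however, is already built into the proof of that theorem, and the nodal-slide bookkeeping in the second paragraph is routine given the standard surgery description of nodal slides in almost toric geometry.
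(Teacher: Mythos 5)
Your derivation is correct and is essentially the intended one: the paper states this corollary without a separate proof, as the immediate combination of Theorem \ref{thmToricKulikov} (existence of the symplectic Kulikov model and identification of the induced nodal integral affine structure with $\mathcal{A}_{GS}$ up to nodal slides) with Theorem \ref{Thm:LTFongoodKulikov}, plus the standard fact that a nodal slide of the base is realized by a modification of the almost toric fibration on the same symplectic manifold. Your flagged subtlety --- that $(V,\omega|_V)$ must be identified with the generic fibre of the model, via cohomologous forms taming the same complex structure and a Moser argument --- is precisely the point already handled inside the proof of Theorem \ref{thmToricKulikov}, so nothing further is needed.
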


\begin{rmk}
     We expect this corollary to hold for a smooth anti-canonical hypersurface in a possible non-smooth toric Fano 3-folds.
\end{rmk}
While the statement of Theorem \ref{thmToricKulikov} appears expected at first glance, at closer inspection it is more surprising. As already noted, a symplectic Kulikov model is obtained by a careful choice of small resolution. The possible choices we consider depend on an arbitrary choice of ordering of the boundary divisors. These choices affect the ATF's on the boundary components. The final effect of this on the integral affine structure is far from immediately obvious. The fact that after these resolutions the integral affine structure remains isomorphic (up to nodal slides) to the Gross-Siebert structure $\mathcal{A}_{GS}$, regardless of the choice, is quite noteworthy. See the next subsection for further discussion of this point.

\subsection{The general context}
According to the SYZ conjecture \cite{SYZ}, a maximal degeneration of Calabi-Yau manifolds should give rise to a special Lagrangian fibration on smooth fibers near the large complex structure limit. Such a special Lagrangian fibration gives rise to a pair of integral affine structures on the regular locus of its base $B$, which are related to each other by a Legendre transform \cite{Gross2012}. These affine structures arise by considering the fluxes of $\Omega$ and $\omega$ respectively on the torus fibers. According to a conjecture by \cite{KontsevichSoibelman01}, $B$ with one of these IAS should be interpreted as the essential skeleton, a certain subcomplex in the dual intersection complex which in our case coincides with it. Moreover, this IAS can be constructed, at least in the projective case, via non-Archimedean geometry \cite{NASYZ}. The other one, its Legendre dual, which comes from the Arnold Liouville coordinates, can be interpreted in our case as living on the intersection complex $\Delta_{\cX}$. For a recent survey, see \cite{Li2022survey}.

For the purpose of studying the Fukaya category it suffices to consider instead a \emph{weak SYZ fibration} on the general fiber. Let us give a tentative definition of this notion. A weak SYZ fibrations is a continuous involutive system over a base $B$ such that the singular locus is a codimension $2$ CW complex and such that the fiber over the regular locus is a Maslov $0$ Lagrangian torus. The full definition needs to be supplemented by some convexity properties near the singularities, but we will not explore this here. Such a weak SYZ fibration would go a long way towards studying the Fukaya category of the general fiber.

This motivates the following general problem. 
\begin{pblm}\label{pblm1}
    Given a Kahler manifold $V$ with a maximal degeneration $\cX$ whose total space is tamed by a symplectic form $\omega$ produce a weak SYZ fibration on $V$ whose base is dual in an appropriate sense to the essential skeleton. 
\end{pblm}

Theorem \ref{Thm:LTFongoodKulikov} addresses Problem \ref{pblm1} in dimension $2n=4$. Note the fibers of an $ATF$ over a K3 are automatically Maslov $0$. More precisely, since a K3 is simply connected, up to homotopy there is a unique smooth trivialization of the canonical bundle. On the other hand, for any nodal fibration there is a unique such trivizalization for which the regular fibers are Maslov $0$. See, e.g., \cite[\S7.6]{GromanVarolgunes21}. The construction in Theorem \ref{Thm:LTFongoodKulikov} involves the additional input of ATF's on the components of the central fiber. The log Calabi-Yau pair $(X,D)$ does not uniquely determine the ATF on $X$ with boundary divisor $D$. It is an interesting question whether one can obtain genuinely different ATF's on K3 from different such choices. This would lead, e.g., via the construction of \cite{KontsevichSoibelman06}, to multiple mirrors.

A variant of Problem \ref{pblm1} arises in the \emph{Gross-Siebert program} \cite{GrossSiebertI,GrossSiebertII}. The latter considers \emph{toric degenerations} instead of maximal degenerations. These have a central fiber consisting of a configuration of toric varieties. Unlike the maximal degeneration case, the total space of a toric degeneration is singular. Given a toric degeneration one can define its intersection complex $\Delta_{\cX}$ \cite{GrossSiebertI}. The central fiber $\cX_0$ is endowed with a continuous map $\mu:\cX_0\to \Delta_{\cX}$ which restricts to the toric moment map. The datum of the degeneration induces an extension of the piecewise affine structure to a strict affine structure on the complement of a codimension 2 set contained in the lower dimensional strata. In the two dimensional case this is a nodal integral affine manifold. However, note the difference to the integral affine structure of Lemma \ref{lmIntAffStr} for which the nodes all occur in the interiors of the top cells whereas in the Gross-Siebert model they occur on the baricenters of the edges in the 1-skeleton.

\begin{pblm}\label{pblm2}
    Given a Kahler manifold $V$ with a toric degeneration $\cX$ whose total space is compatible with a closed 2-form $\omega$ which is non-degenerate on the regular locus and toric on the central fiber produce a weak SYZ fibration on $V$ whose base is the intersection complex. 
\end{pblm}

Our Theorem \ref{thmToricKulikov} addresses Problem \ref{pblm2} in the particular case of anti-canonical hypersurfaces in smooth toric fano in dimension $2n=4$. As already mentioned, we reduce this case to the previous one via small resolution. Nevertheless, the end result is an ATF whose base is the one appearing in the Gross-Siebert program. This leads to the following general question:
\begin{ques}
    Does \ref{thmToricKulikov} apply more generally to toric degenerations of 
    K\"ahler K3 surfaces? That is, given a toric degeneration $\cX\to D^*_\epsilon$ of $X$ is there a symplectic small resolution of $\cX$ so that induced integral affine structure on $\Delta_{\cX}$ coincides with the Gross-Siebert one?
\end{ques}

\subsection{Outline of the proof of Theorem \ref{Thm:LTFongoodKulikov}}
The construction is based on pulling back on ATF defined on the central fiber via parallel transport.  This idea goes back to \cite{Ruan:2001} who applied this to toric degnerations of Calabi-Yau hypersurfaces. The main problem for us is, \emph{how does one extend this torus fibration smoothly over the $1$-skeleton?} We have to deal now with the points that flow into the normal crossings singularity.

Our construction proceeds in two steps:
\begin{itemize}
    \item a \emph{preparatory step} which uses soft symplectic topology. As a result of this step we will have modified the almost toric fibrations $\mu_i:X_i\to B_i$ and the map $\pi:\cX\to D_{\epsilon}$ so that 
    \begin{enumerate}
        \item The ATF's glue to a map $\mu:\cX_0\to \Delta_{\cX}$ which is well defined and continuous.  
        \item Denote by $P_{\gamma}$ the symplectic parallel transport from a regular fiber to the central fiber along some path. Then $\mu\circ P_{\gamma}$ extends smoothly to a proper Lagrangian submersion over the union of the interiors of the $2$-cells with small open neighborhoods of the $0$-cells. That is, over a set of the form $\Delta_{\cX}\setminus A$ where $A$ is a finite union of intervals contained in the interior of the edges of the 1-skeleton.
        \item The monodromy of the induced integral affine structure around loops in $\Delta_{\cX}\setminus A$ that surround no nodal point is trivial.
    \end{enumerate}
    In particular, because of the last item, there is a unique extension of the integral affine structure to $\Delta_{\cX}$.
    \item a \emph{hard step}: Each component $\delta$ of $ A$ is a closed interval in the interior of some $1$-cell $e_{ij}$. Fix a closed neighborhood $R\subset \Delta_{\cX}$ of $\delta$ so that 
    \begin{enumerate}
    \item $R$ is a rectangle in some affine coordinates
    \item  $\partial R\subset \Delta_{\cX}\setminus A$, and, 
    \item $R$ contains no nodal points.  
    \end{enumerate}
    Using classification results for fillings of contact tori, for  symplectic structures on $S^2 \times S^2$, and for symplectic log Calabi-Yau divisors in rational manifolds, we show there is a symplectomorphism $(P_{\gamma}\circ\mu)^{-1}(R)\simeq R\times \T^2$, and, moreover, this symplectomorphism intertwines the torus fibrations near $\partial R$. This allows to extend the torus fibration smoothly across $\delta$ after some modification near the pre-image of $\delta$. 
\end{itemize}

\subsection{Prospects for higher dimensions}
We expect the preparatory step to generalize after some weakening to toric degenerations in higher dimensions, dropping the requirement of a smooth total space. This time there will of course be monodromy around the singular locus.

The hard step relies heavily on hard theorems concerning symplectic topology in dimension four. One hopes that using Lefschetz fibration techniques one can reduce the dimension. Such ideas were employed by \cite{RuddatMak2021} in the construction of Lagrangians rational homology sphere on the mirror quintic threefold.

The work \cite{PelkaBobadilla} which appeared while this work was still in preparation suggests a complementary approach. Roughly speaking, \cite{PelkaBobadilla} construct a Lagrangian torus fibration of an open set in the A'Campo space over a neighborhood of some deformation of the 1-skeleton. This contrasts with our approach of starting with the union of the top stratum and neighborhoods of a finite number of points in the lower strata. 
\begin{rmk}
    One could try to prove Theorem \ref{Thm:LTFongoodKulikov} by interpolating the torus fibration of \cite{PelkaBobadilla} with the given ATF's. The details appear to us to be non-trivial.
\end{rmk}
\section*{Acknowledgements}
Y.G. and P.C. were supported by the ISF (grant no. 2445/20) and the BSF (grant no. 2020310). Y.G. is grateful to Umut Varolgunes for some helpful discussions.

PC is grateful to the Hebrew University of Jerusalem for a postdoctoral fellowship where a bulk of the work was undertaken. PC is thankful to Universit\'e Libre de Bruxelles where this project was completed. This work was supported by the FWO and the FNRS via EOS project 40007524. PC is grateful to Mohan Swaminathan and Martin Pinsonnault for helpful discussions and support during the completion of the project. PC is thankful to Jie Min for helpful discussions regarding constructing almost toric fibrations on the symplectic blow-up of rational manifolds. We are also very grateful to the anonymous referee whose report improved the document.  
\section{The preparatory step}

In this section, we construct the almost toric fibration over the complement of some open intervals in the intersection complex. We first define a piecewise nodal integral affine structure and construct a continuous fibration of the central fiber over it which is a nodal Lagrangian submersion in the top stratum. We then extend the piecewise nodal integral affine structure to a nodal integral affine structure. After a modification of the holomorphic map $\pi$ near each triple intersection point, the parallel transport defines a smooth map of the complement of some closed set in the general fiber to the intersection complex which is a proper nodal Lagrangian submersion over the complement of some intervals contained in the 1-skeleton of the intersection complex $\Delta_{\cX}$.  

Before proceeding, let us define the intersection complex $\Delta_{\cX}$ more carefully. The $1$-skeleton of $\Delta_{\cX}$ is a canonical construction: we have a $0$-cell (vertex) for each triple intersection point $X_{ijk}$ and a $1$-cell (edge) for each double intersection curve $X_{ij}$, connecting the two vertices corresponding to the triple intersection points it contains. However, when it comes to constructing the $2$-cells, we must make a choice. For each component $X_i$ of $\cX_0$ of a Kulikov model, the boundary divisor $\sum_j X_{ij}$ is a cycle of double intersection curves. We pick an arbitrary identification the corresponding cycle in the $1$-skeleton of $\Delta_{\cX}$ with the unit circle $S^1$. We use this identification to to attach a standard $2$-cell along this circle for each such cycle. The proof of Proposition~\ref{propPcwsNAffine} below suggests an alternative approach for constructing the $2$-cells using the almost toric fibrations $\mu_i$. That construction also involves a choice, namely the choice of an almost toric fibration on each component.

\subsection{Piecewise nodal integral affine structure on the intersection complex}

\begin{defn}
    A piecewise linear function $f:\R^m \rightarrow \R$ is a continuous function which is linear on some collection of polytopes $P_i$ such that $\bigcup_i P_i = \R^m$. A piecewise linear manifold $M$ is a topological space such that the transition functions are piecewise linear.
\end{defn}

\begin{defn}
Let $B$ be a (PL) smooth $2$-manifold. A \emph{(piecewise) nodal integral affine structure} on $B$ is a collection of isolated points $p_1,\dots p_N$ and a (piecewise) integral affine atlas on the complement $B_{reg}=B\setminus\{p_1,\dots, p_N\}$ which is integral affine on a punctured neighborhood of each $p_i$ and so that the monodromy around each nodal point $p_i$ is given in some integral basis by the shear matrix 
$$\begin{pmatrix}
1 & 1 \\
0 & 1
\end{pmatrix}$$
\end{defn}

\begin{exmp}
    \begin{itemize}
        \item The  $2$-torus, obtained by gluing opposite edges of a polygon, carries an integral affine structure.
        \item The interior of the base of an almost toric fibration carries a nodal integral affine structure.  
        \item The boundary of a polytope in $\R^3$ carries a piecewise integral affine structure.
     
    \end{itemize}
\end{exmp}

Fix almost toric fibrations $\mu_i:X_i\to B_i$ on the central fiber $\cX_0$.
\begin{prop}\label{propPcwsNAffine}
    The ATF's $\mu_i:X_i\to B_i$ glue together to a cell complex $B$ endowed with a piecewise nodal integral affine structure, and which is homeomorphic to the intersection complex $\Delta_{\cX}$.
\end{prop}
\begin{proof} 
 We construct a cell complex $B$ as follows. For each triple intersection point $X_{ijk}$ we introduce a $0$-cell $B_{ijk}$.  We fix an order on the $0$-cells, and for any double intersection component $X_{ij}$ we attach the interval $B_{ij}=[0,\omega(X_{ij})]$ to the two triple intersection points $X_{ijk},X_{ijk'}$ in an order preserving manner.  Finally, for the 2-cells we take the base diagrams $B_i$ and attach $\mu_i(X_{ij})$ to $B_{ij}$ by the unique affine map to the interval taking $\mu_i(X_{ijk})$ and $\mu_i(X_{ijk'})$ to $B_{ijk}$ and $B_{ijk'}$ respectively. It is clear that the construction is independent of the choice of order on the $0$-cells. Namely, if we change the order between a pair of successive vertices $X_{ijk}$ and $X_{ijk'}$ we get a map on the corresponding $1$-skeleta by mapping the interval $B_{ij}$ to itself via $t\mapsto \omega(X_{ij})-t$. This map extends to an isomorphism of the corresponding cell complexes. 

We obtain a homeomorphism between $B$ and the intersection complex $\Delta_{\cX}$ by mapping the 1-skeleton of $B$ to the $1$-skeleton of $\Delta_{\cX}$ in the obvious way. For each $i$ we then pick any homeomorphism from the $2$-cell associated with $X_i$ to $B_i$ in a way that commutes with the attaching maps. While this identification involves arbitrary choices, the piecewise nodal induced integral affine structures are all isomorphic. 
\end{proof}

We will consider the manifold $B$ constructed in the proof as a specific construction of the intersection complex and will not distinguish it from $\Delta_{\cX}$. On the other hand, we will not distinguish $B_i$ from its image in $B$. We refer to the union $\cup_i\partial B_i$ as the $1$-skeleton of $B$.

The  construction in Proposition \ref{propPcwsNAffine} induces a piecewise integral affine structure on any neighborhood $V$ of the $1$-skeleton that contains no nodes. That is, we have a strictly integral affine structure on each of the intersections $V\cap B_i$. We will later extend the  nodal affine structure from the complement of the 1-skeleton to the entire intersection complex. 

For now we introduce such an extension in a neighborhood of the $0$-skeleton. Consider a triple intersection point $p=X_{ijk}$. Let $v_1,v_2,v_3$ be primitive vectors tangent to the edges emanating from $p$ ordered as $ij<ik<jk$. This gives an identification $\eta_{ijk}$ of a small open neighborhood $V$  of $p$ in $B$ with an open neighborhood $V_0$ of the origin in $\partial (\R_{\geq 0})^3$. Namely $\eta_{ijk}$ is determined by the requirements 
\begin{itemize}
\item $\eta_{ijk}(v_l)=e_l$ for $l=1,2,3$,  and,
\item $\eta_{ijk}$ is linear on each component $V\cap B_l$.
\end{itemize}
We fix once and for all an integral affine structure on  $V_0$ by declaring the orthogonal projection $\pi_{V_0}$ whose kernel is spanned by the vector $e_1+e_2+e_3$ to be an integral affine chart. Clearly, this chart extends the integral affine structures on the components of $V_0$.

\begin{defn}\label{dfCanIntAff}
    Denote the image of $\pi_{V_0}$ by $\mathfrak{h}^*$ and let  $\phi_{ijk}:V\to \mathfrak{h}^*$ be the composition $\pi_{V_0}\circ \eta_{ijk}.$ We refer to the local integral integral affine structure generated by the chart $\phi_{ijk}$ as the \emph{canonical integral affine structure near the vertex $B_{ijk}$. }
\end{defn}
Note that this integral affine structure is independent of the order chosen on $i,j,k$.

\subsection{A fibration of the central fiber over the intersection complex}
Observe $\R^3$ is the dual Lie algebra of the standard 3-torus.  Considering the diagonal action of  $\T^3$ on $\C^3$, the subspace $\mathfrak{h}^*$ is the dual Lie algebra of the 2-torus which preserves the holomorphic volume form $\Omega_0:=dz_1\wedge dz_2\wedge dz_3.$ Denote by $\mu_0:\C^3\to \mathfrak{h}^*$ the moment map of this action. 

\begin{prop}\label{lmInterpolation1}
    After some modifications of each of the almost toric fibrations $\mu_i$  in a neighborhood of the $1$ dimensional stratum $\cup_{j} X_{ij}$, the $\mu_i$ glue together to a continuous map $\mu:\cX_0\to B$ which is an ATF over each $B_i\subset B$. Moreover, for each triple intersection point $X_{ijk}$, there is an open neighborhood $U$ of $X_{ijk}$ in $\cX$ and a symplectic embedding $\psi_{ijk}$ which, in the notation of Definition \ref{dfCanIntAff}, fits into a commutative diagram 
    \[
    \xymatrix{
        (U,\cup_{ij}X_{ij}\cap U) \ar[r]^{\psi_{ijk}} \ar[d]_{\mu} & (\C^3,\{z_1z_2z_3=0\})
        \ar[d]_{\mu_0} \\
        \mu(U\cap\cX_0)\ar[r]^{\phi_{ijk}} & \mathfrak{h}^*.
    }
\]
\end{prop}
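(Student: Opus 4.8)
The plan is to establish Proposition \ref{lmInterpolation1} in two stages: first a purely local normal form near each triple point $X_{ijk}$, and then a global interpolation argument that modifies each $\mu_i$ near the $1$-skeleton so that the pieces match up.

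\textbf{Step 1: Local model at a triple point.} Near a triple intersection point $X_{ijk}$ the total space $\cX$ is smooth with trivial canonical bundle, and the central fiber is the normal crossings divisor $\{z_1z_2z_3=0\}$ locally. Because the three double curves $X_{ij},X_{jk},X_{ki}$ are assumed pairwise symplectically orthogonal at $X_{ijk}$ (this is exactly the extra hypothesis built into Definition \ref{Defn:AdmissibleKulikovModel}), one can choose Darboux-type coordinates adapted to this configuration. The idea is to build a symplectic embedding $\psi_{ijk}\colon U\hookrightarrow \C^3$ sending $X_{ij}\cap U$ to a coordinate hyperplane, etc., and sending the symplectic form to (a constant multiple of) the standard form. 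The symplectic orthogonality is precisely what allows the three symplectic normal bundles to be split simultaneously; an equivariant Darboux / Moser argument along $X_{ijk}$, then extended outward, produces $\psi_{ijk}$. One must check $\psi_{ijk}$ can be chosen so that the holomorphic volume form and the $\T^3$-moment map interact correctly — i.e.\ that the composite $\mu_0\circ\psi_{ijk}$ is, along $\cX_0\cap U$, a valid local piece of an almost toric fibration on each $X_i$. Shrinking $U$ if necessary, $\mu_0\circ\psi_{ijk}|_{X_i\cap U}$ is a toric moment map for the residual $\T^2$-action, hence a legitimate local ATF; this is what we will \emph{declare} the modified $\mu_i$ to be on $X_i\cap U$.

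\textbf{Step 2: Interpolation away from the triple points.} Now fix, for each $i$, the original ATF $\mu_i\colon X_i\to B_i$, and compare it on a punctured neighborhood of a double curve $X_{ij}$ with the local model coming from the two triple points at the ends of that curve. Along the interior of the edge $B_{ij}$, both $\mu_i$ and the local models restrict to honest (singularity-free) Lagrangian torus fibrations over an interval's worth of transverse data, and any two such are related by a fiberwise symplectomorphism isotopic to the identity (the space of such local models is contractible — this is the "soft" input alluded to in the outline). So one interpolates: keep $\mu_i$ unchanged outside a neighborhood of $\cup_j X_{ij}$, use the local model inside the $U$'s near triple points, and glue with a partition-of-unity / Moser interpolation in the overlap annuli. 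The matching of the base identifications is handled by the cell-complex construction of Proposition \ref{propPcwsNAffine}: the interval $B_{ij}=[0,\omega(X_{ij})]$ is glued in by the unique affine map, and by construction $\mu_i(X_{ij})$ and $\mu_j(X_{ij})$ land on the same interval, so after the modification $\mu_i$ and $\mu_j$ agree on $X_{ij}$ and the pieces glue to a continuous $\mu\colon\cX_0\to B$. Finally one checks the commuting square: $\phi_{ijk}=\pi_{V_0}\circ\eta_{ijk}$ was defined precisely so that, under $\psi_{ijk}$, $\mu_0$ descends to it — the vector $e_1+e_2+e_3$ spanning $\ker\pi_{V_0}$ corresponds to the $\T^3/\T^2$ direction killed when passing to the Calabi--Yau torus, which is exactly the direction collapsed going from $\cX$ to $\cX_0$ near the triple point.

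\textbf{Main obstacle.} I expect the crux to be Step 1: producing the symplectic embedding $\psi_{ijk}$ with \emph{all} of the required compatibilities simultaneously — normal-crossings coordinates for $\cX_0$, Darboux for $\omega$ (only tame, not Kähler, though near the triple point one can arrange a genuine symplectic normal form), and compatibility with the would-be moment map so that the restriction to each $X_i$ is a bona fide ATF extending (after the allowed modification) the given one. The symplectic orthogonality hypothesis is the key that unlocks this, but organizing the Moser argument so that the three coordinate directions are handled coherently — and so that the base map is forced to be $\phi_{ijk}$ rather than merely some diffeomorphic reparametrization of it — is the delicate point. A secondary, more bookkeeping-type difficulty is ensuring the interpolation in Step 2 does not create new nodal singularities in the neighborhood of the $1$-skeleton (consistent with Lemma \ref{lmIntAffStr}'s requirement of smoothness there); this should follow because all the local models involved are singularity-free over the relevant region, so any interpolation between them can be taken within singularity-free fibrations.
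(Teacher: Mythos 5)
There is a genuine gap, and it is located exactly where you wave it away: the claim that, because $\mu_i(X_{ij})$ and $\mu_j(X_{ij})$ are attached to the same interval $B_{ij}=[0,\omega(X_{ij})]$, the modified maps $\mu_i$ and $\mu_j$ \emph{agree pointwise} on $X_{ij}$. Equality of images is far weaker than equality of maps: $\mu_i|_{X_{ij}}$ and $\mu_j|_{X_{ij}}$ are moment maps of two a priori different Hamiltonian circle actions on the sphere $X_{ij}$ with the same fixed points and the same total area, and in general their level circles differ, so the glued map would be discontinuous along the interior of the double curve. Your triple-point modification only standardizes the fibrations near the $0$-strata and does nothing in the middle of $X_{ij}$. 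The paper's proof devotes its second half precisely to this: it produces a Hamiltonian $H_{ij}$ on $X_{ij}$ whose time-one flow $\psi^{ij}$ satisfies $\mu_j\circ\psi^{ij}=\mu_i$ on $X_{ij}$ (Arnold--Liouville normal form), extends $H_{ij}$ to a neighborhood of $X_{ij}$ inside $X_i$ via the symplectic normal bundle and the symplectic neighborhood theorem, cuts it off with a bump function, and replaces $\mu_i$ by its pullback under the resulting compactly supported Hamiltonian diffeomorphism, all while keeping the fibrations untouched near the already-standardized triple points. Without this step the first assertion of the proposition (continuity of the glued $\mu$) is simply not established.

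Two secondary points. First, your interpolation near the triple points by ``partition-of-unity / Moser'' and an appeal to contractibility of the space of local models is not a proof: a pointwise interpolation of two fibration maps need not be a Lagrangian fibration at all, and the contractibility claim is exactly what has to be shown. The paper avoids this by only ever modifying the $\mu_i$ through composition with symplectomorphisms: Lemma \ref{Lemma:InterpolatingtorusfibrationonC^2} builds, via Alexander's trick applied to the stratification-preserving symplectomorphism and a cut-off time-dependent Hamiltonian, a new admissible toric action on $\C^2$ agreeing with the given one near the origin and with the standard one outside a compact set, so the modified map is automatically again an ATF. Second, you identify Step 1 (existence of $\psi_{ijk}$) as the crux, but the paper treats it as standard: symplectic orthogonality of the three double curves at $X_{ijk}$ gives the symplectomorphism to the coordinate-hyperplane configuration directly, and the genuinely nontrivial content is the two interpolation steps you compress.
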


Before proving this, we discuss interpolation of almost toric fibrations near a vertex.
\begin{defn}
    An admissible toric action on a neighborhood of the origin in $\C^2$ is a $\rho: \T^2 \rightarrow \Symp(U,\om_0)$ on an open neighborhood  $U$ of the origin in $(\C^2,\om_0)$ which is equivariantly symplectomorphic to the standard linear action $\rho_{st}$ with weights $(1,0)$ and $(0,1)$ at the origin by a symplectomorphism which preserves the coordinate stratification.  
\end{defn}
\begin{rmk}
    By the equivariant Darboux Theorem, this is equivalent to the assumption that $\rho$ fixes the origin, leaves the coordinate axes invariant, acts freely on the complement of the coordinate axes,  and the weights at the origin are $(1,0)$ and $(0,1)$.
\end{rmk}
\begin{lemma}\label{Lemma:InterpolatingtorusfibrationonC^2}
    Let $\rho_1: \T^2 \rightarrow \Symp(U,\om_0)$ denote an admissible toric action on an open neighborhood  $U$ of the origin in $(\C^2,\om_0)$. Then there exists a new admissible toric action $\rho:\T^2 \rightarrow \Symp(\C^2,\om_0)$ and open subsets $0 \in V_1\subset V_2 \subset \overline{V}_2 \subset U$  such that  $\rho|_{V_1} = \rho_1$ and $\rho|_{\C^2 \setminus V_2} = \rho_{st}$ where $\rho_{st}$ denotes the standard torus action. 
    
\end{lemma}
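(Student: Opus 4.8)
\textbf{Proof plan for Lemma \ref{Lemma:InterpolatingtorusfibrationonC^2}.}

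The plan is to produce the interpolating action by interpolating between the two Hamiltonian generators of the circle subactions, one coordinate at a time, while keeping control of the coordinate stratification. Write $\rho_1$ via its moment map $\nu_1 = (H_1, H_2): U \to \R^2$, normalized so that $\nu_1(0) = 0$; by admissibility $\nu_1$ agrees to first order at the origin with the standard moment map $\nu_{st}(z) = \tfrac{1}{2}(|z_1|^2, |z_2|^2)$, and each $H_a$ generates a free $S^1$-action off the axes. First I would fix the $S^1 \times S^1$ torus: since both actions are admissible toric actions with the same linearization, after an equivariant Darboux chart I may assume $\rho_1$ and $\rho_{st}$ are genuinely actions of the \emph{same} torus $\T^2$, differing only by how that torus is embedded into $\Symp(U,\om_0)$. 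The goal then is to interpolate between the two embeddings.

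The key steps, in order. (1) Observe that the space of admissible toric actions agreeing with a fixed linearization at the origin and preserving the coordinate stratification is, by equivariant Moser / equivariant Darboux, parametrized (locally near $\rho_{st}$) by an equivariant ``radial'' deformation: one can write $\rho_1 = \phi_* \rho_{st} \phi^{-1}$ for an equivariant symplectomorphism $\phi$ of a neighborhood of $0$ preserving the axes, which in turn is generated by an equivariant Hamiltonian isotopy $\phi_t$, $\phi_0 = \id$, $\phi_1 = \phi$, with Hamiltonians vanishing to second order at the origin and constant along the axes. (2) Choose a smooth cutoff function $\beta: \C^2 \to [0,1]$ that is $\T^2$-invariant (a function of $|z_1|^2, |z_2|^2$ only), equal to $1$ on a small polydisk $V_1$ and equal to $0$ outside a slightly larger polydisk $V_2$ with $\overline{V}_2 \subset U$. (3) Replace the generating Hamiltonian $h_t$ of $\phi_t$ by $\beta \cdot h_t$ and let $\tilde\phi_t$ be the resulting isotopy; because $\beta$ is $\T^2$-invariant, each $\tilde\phi_t$ is $\T^2$-equivariant, so $\rho := (\tilde\phi_1)_* \rho_{st} (\tilde\phi_1)^{-1}$ is again an admissible toric action. (4) Verify the two boundary conditions: on $V_1$ we have $\beta \equiv 1$, so after possibly shrinking $V_1$ the cutoff isotopy agrees with the original one on $V_1$ — here one needs that the flow of $\beta h_t$ does not leave the region where $\beta = 1$, which is arranged by taking $V_1$ small and using that $h_t$ vanishes to second order at $0$ so the isotopy moves points of $V_1$ by $O(\mathrm{diam}(V_1)^2)$; and on $\C^2 \setminus V_2$ the Hamiltonian $\beta h_t$ is identically zero so $\tilde\phi_1 = \id$ there and $\rho = \rho_{st}$. (5) Extend $\rho$ from the neighborhood of $0$ to all of $\C^2$ by declaring it equal to $\rho_{st}$ outside $V_2$; this is consistent by step (4) and gives a globally defined admissible toric action on $(\C^2, \om_0)$, proving the lemma.

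The main obstacle I expect is step (4), specifically ensuring that the region $V_1$ on which the cutoff isotopy faithfully reproduces $\rho_1$ is genuinely nonempty and open, rather than being eaten away by the flow: one must commute the cutoff with the time-dependent flow, and a priori $\tilde\phi_t$ on a point of $V_1$ could drift into the transition annulus $V_2 \setminus V_1$ where $\beta \neq 1$, so that $\rho \neq \rho_1$ there. The fix is the standard one for localizing Hamiltonian isotopies: choose nested polydisks $V_1 \subset V_1' \subset V_2$, set $\beta \equiv 1$ on $V_1'$, and use that $h_t$ vanishes to order two at the origin so that the $C^1$-small isotopy $\tilde\phi_t$ maps $V_1$ into $V_1'$ for all $t \in [0,1]$ once $V_1$ is chosen small enough; on $V_1$ the flows of $h_t$ and $\beta h_t$ then coincide. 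A secondary, more cosmetic point is the reduction in the first paragraph identifying the two tori — this is exactly the equivariant Darboux theorem quoted in the remark following the definition of admissible toric action, so it may be invoked directly. Everything else (equivariance of cutoff Hamiltonians with invariant cutoff, preservation of the coordinate stratification since $V_2$ and $\beta$ are stratification-respecting and $\rho_{st}$ preserves the axes) is routine.
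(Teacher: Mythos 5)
Your overall scheme coincides with the paper's: realize the admissible action as a conjugate of $\rho_{st}$ by the stratification-preserving intertwiner supplied by the equivariant Darboux normal form, generate that intertwiner by a Hamiltonian isotopy, cut the Hamiltonian off, and conjugate $\rho_{st}$ by the time-one flow of the cutoff. However, two of your steps do not hold as stated. The equivariance claims are false and, taken literally, would collapse the construction: the isotopy $\phi_t$ joining $\id$ to $\phi$ cannot commute with $\rho_{st}$ unless $\rho_1=\rho_{st}$ near the origin, and the assertion that $\tilde\phi_t$ is $\T^2$-equivariant because $\beta$ is $\T^2$-invariant is wrong, since invariance of $\beta$ does not make $\beta h_t$ invariant; worse, if $\tilde\phi_1$ did commute with $\rho_{st}$ then $(\tilde\phi_1)_*\rho_{st}(\tilde\phi_1)^{-1}=\rho_{st}$ everywhere, contradicting your requirement $\rho|_{V_1}=\rho_1$. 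What is actually needed, and what the paper checks, is not equivariance but that the flow of $\beta h_t$ preserves the coordinate stratification: $X_{\beta h_t}=\beta X_{h_t}+h_t X_{\beta}$, the first term is tangent to the axes because the original isotopy is stratified, and the second because $X_{\beta}$ is a combination of the two rotational vector fields (the paper enforces this via the conditions on $\partial f/\partial x$ and $\partial f/\partial y$ near the axes). Admissibility of $\rho$ then follows from $\tilde\phi_1$ being a stratified symplectomorphism fixing the origin, not from any equivariance.

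The second problem is that the existence of the generating Hamiltonian isotopy is precisely the nontrivial content you assert but do not prove: equivariant Moser/Darboux gives the intertwiner $\phi$, not an isotopy from $\id$ to $\phi$ through stratification-preserving symplectic embeddings with globally defined Hamiltonians. The paper produces it by Alexander's trick $\psi_t(v)=\frac1t\psi(tv)$ (smooth at $t=0$ by Hadamard's lemma), concatenated with a path of linear split symplectomorphisms joining $d\psi(0)$ to the identity, and then uses simple connectedness of the domains to convert the resulting symplectic vector fields into Hamiltonians $h_t$; without some such argument your step (1) is unsupported. Two smaller inaccuracies: $h_t$ need not be constant along the axes (tangency of $X_{h_t}$ to an axis only says that $h_t$ restricted to that axis generates the induced flow), and since $h_t$ vanishes to second order the displacement of points of $V_1$ is $O(\mathrm{diam}\,V_1)$, not $O(\mathrm{diam}(V_1)^2)$; your confinement argument in step (4) still goes through, exactly as in the paper, because trajectories of $X_{h_t}$ satisfy $|z(t)|\le e^{Ct}|z(0)|$, so a sufficiently small $V_1$ never leaves the region where $\beta\equiv 1$.
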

\begin{proof}
By assumption, there is an open neighborhood $V$ of the origin in $\C^2$ and a stratified equivariant symplectomorphism $\psi:V\to U$ where the action on $V$ is $\rho_{st}$. Without loss of generality, we may take $V$ to be the unit ball. We first observe that there exists a family $\psi_t:V=B_1\to \C^2$ of symplectic embeddings, such that $\psi_0$ is the inclusion, $\psi_1=\psi$, and such that $\psi_t$ preserves the stratification by the coordinate axes. Explicitly, we use Alexander's trick and consider the path $$
\psi_t(v):=\begin{cases}
        \frac1{t}\psi(tv),&\quad t\neq 0\\
        d\psi(0,0),&\quad t=0.
        \end{cases}
        $$
        The path can be shown to be smooth using Hadamard's Lemma. Note that as $\psi$ preserves the stratification, $d\psi(0,0)$ is a split symplectomorphism i.e a symplectomorphism for which the axes $\C\times \{0\}$ and $\{0\} \times \C$ are eigen directions for $d\psi(0,0)$. We may further connect the linear split symplectomorphism $d\psi(0,0)$ to the identity via a path of linear split symplectomorphisms. Let us still denote the concatenation by $\psi_t$.  We thus get a time dependent symplectic vector field $X_t$ with domain $\psi_s(V)$, by $p\mapsto \frac{d}{dt}\psi_{s+t}(p)|_{t=0}$. By simply connectedness we can find a time dependent Hamiltonian $H_t:\psi_t(V)\to \R$ so that $X_t$ is its Hamiltonian vector field. Let $\epsilon>0$ be such that $B_{\epsilon}\subset \psi_t(V)$ for all time $t\in[0,1]$. 

Let $f:\R^2\to[0,1]$ be a function which is identically $0$ for $x+y\geq\epsilon$, identically $1$ close enough to the origin, and such that $\frac {\partial f}{\partial x} $ is identically $0$ in a sufficiently small neighborhood of the $y$ axis, and $\frac {\partial f}{\partial y} $ is identically $0$ in a sufficiently small neighborhood of the $x$ axis. Let $g:\C^2\to\R$ be defined by $g(z_1,z_2):=f(|z_1|^2,|z_2|^2)$. Then the Hamiltonian flow of $gH_t$ is globally defined and is readily checked to preserve the axes. Let $\rho$ be the action defined by pulling back the standard action via the inverse of the time $1$ flow $\phi:\C^2\to\C^2$ of $gH_t$. Then $\rho$ satisfies all the requirements. Indeed, since $\psi$ fixes the origin, we verify that a small enough neighborhood of the origin is mapped for all $t\in[0,1]$ under  $\psi_t$ into the region where $f\equiv 1$. In that neighborhood we have $\phi=\psi.$ Admissibility of $\rho$ follows since $\phi$ preserves the stratification. 
\end{proof}

\begin{proof}[Proof of Proposition \ref{lmInterpolation1}]
For any collection of  ATF's $\mu_i:X_i\to B_i$ on the components of the central fiber $\cX_0$, the construction in Proposition \ref{propPcwsNAffine} gives a cell complex $B$ endowed with a piecewise nodal integral affine structure, and which is homeomorphic to the intersection complex $\Delta_{\cX}$. The maps $\mu_i$ do not a priori agree on the intersections $X_{ij},X_{jk},X_{ik}$. However,
we have for each triple intersections an equality of points $\mu_i(X_{ijk})=\mu_j(X_{ijk})=\mu_k(X_{ijk})$ and for each double intersection an equality of sets $\mu_i(X_{ij})=\mu_j(X_{ij})$. 
To obtain a continuous map on the central fiber we first modify the fibrations $\mu_i$ to make them agree as maps to the resulting $B$ on a neighborhood of each triple intersection $X_{ijk}$. To do so we first note that by symplectic orthogonality the configuration of divisors $X_{ij},X_{jk},X_{ik}$ is symplectomorphic near $X_{ijk}$ to the configuration of coordinate hyperplanes near the origin by a symplectomorphism $\psi_{ijk}$. 

We now apply Lemma \ref{Lemma:InterpolatingtorusfibrationonC^2} to modify $\mu_i$ on a neighborhood of $X_{ijk}$ in $X_i$ as follows. On the one hand, the almost toric fibration $\mu_i$ restricts to a toric action in an  open neighborhood $U$ of $X_{ijk}$ inside $X_i$. On the other hand, restricting to a smaller open neighborhood $V$ of $X_{ijk}$ inside $X_i$,  the symplectomorphism $\psi_{ijk}$ gives rise to an admissible toric action $\rho_1$ on $V$ by considering the moment map $\mu_0\circ\psi_{ijk}$. Applying Lemma \ref{Lemma:InterpolatingtorusfibrationonC^2} to this action $\rho_1$, we obtain a new admissible toric action $\rho$ on $U$ which agrees with $\rho_1$ on a smaller neighborhood $V_1$ of the origin and agrees with the original action outside a larger neighborhood $V_2\subset U$. After performing this construction for each of the three coordinate planes (corresponding to the three divisors meeting at $X_{ijk}$) and choosing appropriate affine coordinates on $B_i$, the modified $\mu_i$ satisfies $\mu_i=\mu_0\circ\psi_{ijk}$ on a neighborhood $V$ of $X_{ijk}$ in $X_i$. After this modification, the $\mu_i$ glue  to the continuous function $\mu_0\circ\psi_{ijk}$ on $V\cap\cX_0$. 

We now proceed to dealing with the edges. Note that $\mu_i|_{X_i \cap X_j}$ and $\mu_j|_{X_i \cap X_j}$ both induce circle actions on the sphere $S^2$ with the same two fixed points each of  weight $1$.  There is a  Hamiltonian $H_{ij}$ on $X_{ij}$ whose time 1-flow $\psi=\psi^{ij}$ satisfies $\mu_j\circ\psi|_{X_i \cap X_j} = \mu_i|_{X_i \cap X_j}$. This symplectomorphism can be constructed using the Arnold-Liouville normal form. Let $\Pi_i:N^{{\om_i}}_{X_i}(X_i\cap X_j) \rightarrow X_i\cap X_j$ denote the symplectic normal bundle of $X_i\cap X_j$ inside $X_i$. Consider $H_{ij}\circ\Pi$ on the normal bundle. By the symplectic neighbourhood theorem, this gives us an extension of $H_{ij}$ to a symplectic normal neighbourhood $V_i$ of $X_i \cap X_j$ in $(X_i,\om_i)$. Choose a bump function $\rho_i$ supported on $V_i$. Let $\psi^{ij}_i$ denote the time 1 flow of this new Hamiltonian $ \tilde{H}^{ij}_{i}:=\rho_iH_{ij}\circ\Pi$ on $X_i$. Then $({\psi^{ij}_i})^*\mu_i$ is a new almost toric fibration on $X_i$ which equals the original almost toric fibration $\mu$ outside of some open set. Furthermore, $({\psi^{ij}_i})^*\mu_i$ and $({\psi^{ij}_j})^*\mu_j$ both agree on $X_i \cap X_j$. Moreover, the new almost toric fibration agrees with the old one sufficiently close to the triple intersection points.  

\begin{rmk}
Note that although applying Lemma~\ref{Lemma:InterpolatingtorusfibrationonC^2} modifies the ATF $\mu_i$ and thus the integral affine structure on $B_i$, as the modification is via a symplectomorphism, the resulting IAS on $B_i$ is still integral affine isomorphic to the original one. 
\end{rmk}

\end{proof}

\subsection{The nodal integral affine structure on the intersection complex}
So far we have constructed a piecewise nodal integral affine structure on the intersection complex which is strict on the union of the interiors of the top strata with neighbourhoods of the vertices. We turn to show that this extends to a nodal integral affine structure everywhere. 
\tikzset{every picture/.style={line width=0.75pt}} 

\begin{figure}[H] 
    \centering
    \input{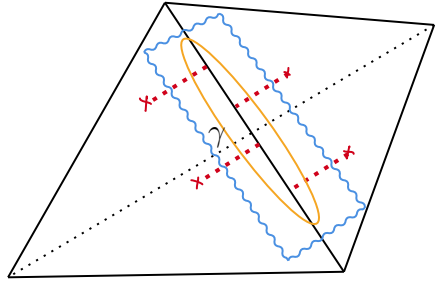}
    \caption{Monodromy around $\gamma$}\label{fig:NoMonodromy}
\end{figure}

\begin{thm}\label{tmNAffine}
    There is a unique extension of the piecewise nodal integral affine structure on the intersection complex introduced in Proposition \ref{propPcwsNAffine} to a nodal integral affine structure which agrees with the canonical structure of Definition \ref{dfCanIntAff} on neighborhoods of the vertices. 
\end{thm}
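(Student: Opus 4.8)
The sought structure is already prescribed on the interior of each $2$-cell $B_i$ (by the ATF $\mu_i$) and, by Definition~\ref{dfCanIntAff}, on a neighborhood of each vertex; moreover the modifications of Proposition~\ref{lmInterpolation1} guarantee that near a vertex $X_{ijk}$ the restriction of the canonical chart $\phi_{ijk}$ to $B_i$ agrees with the affine structure of $B_i$, and by construction no node lies near the $1$-skeleton. So the whole content is to extend the structure across the open edges $B_{ij}$ and to check that the result glues to a global nodal integral affine structure. The plan is: \textbf{(a)} glue across a generic interior point of an edge, up to an integer shear; \textbf{(b)} pin that shear using the canonical chart at one endpoint of the edge; \textbf{(c)} --- the heart of the matter --- show the pinned choice propagates consistently to the other endpoint; \textbf{(d)} assemble globally and deduce uniqueness.

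For \textbf{(a)}, fix an edge $B_{ij}$ and a small disk $D$ about an interior point, so that $D=(D\cap B_i)\cup(D\cap B_j)$ with $D\cap B_i\cap B_j$ a segment of the edge. The charts of $B_i$ and $B_j$ restrict to integral affine charts of $D\cap B_i$ and $D\cap B_j$. Since $\mu_i|_{X_{ij}}$ and $\mu_j|_{X_{ij}}$ are moment maps of circle actions on $X_{ij}\cong S^2$, both $\mu_i(X_{ij})$ and $\mu_j(X_{ij})$ have integral affine length $\om(X_{ij})$, so the two structures induce the same integral affine structure on $B_{ij}$, and after post-composing with a suitable element of $\ASL(2,\Z)$ the two charts agree on the edge. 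Any chart of a putative integral affine structure on $D$ restricting to the two sides must then differ on the $B_j$-side by an element of $\ASL(2,\Z)$ fixing the edge pointwise and preserving the side, that is, by a power $T^n$ ($n\in\Z$) of the shear with axis the edge; conversely every such $n$ does define an integral affine structure on $D$ extending both sides, which retains the chart of $B_i$ (resp.\ $B_j$) on its own side. Thus over an open edge the extension exists and is unique up to a locally constant integer shear, and it always restores the original charts of $B_i$ and $B_j$.

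For \textbf{(b)} and \textbf{(c)}, let $v=X_{ijk}$ and $v'=X_{ijk'}$ be the endpoints of $B_{ij}$. Near $v$ the canonical chart $\phi_{ijk}$ is a strict integral affine chart simultaneously covering the $B_i$- and $B_j$-sectors, hence it selects a definite value of the shear $T^n$ along the part of $B_{ij}$ near $v$; transporting this along the flat, simply connected neighborhood of the open edge yields a candidate structure on all of $\overline{B_{ij}}$ minus its endpoints. What must be checked is that near $v'$ this candidate agrees with $\phi_{ijk'}$ --- equivalently, that the affine monodromy around the loop $\gamma$ of Figure~\ref{fig:NoMonodromy}, which crosses $B_{ij}$ near each of its two endpoints, is the identity. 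I would compute this monodromy as a product of $\ASL(2,\Z)$-matrices: the two crossing transitions near $v$ and $v'$, read off from $\phi_{ijk}$ and $\phi_{ijk'}$ in the bases of primitive edge vectors of Definition~\ref{dfCanIntAff} (the edge $B_{ij}$ supplying the common basis vector up to sign at both vertices), composed with the two corner transitions of the moment polygons $B_i$ and $B_j$ along $B_{ij}$, which are determined by the self-intersection numbers $(X_{ij}^2)_{X_i}$ and $(X_{ij}^2)_{X_j}$. The product collapses to the identity precisely because of the triple-point formula $(X_{ij}^2)_{X_i}+(X_{ij}^2)_{X_j}=-2$ valid for a double curve of a type III Kulikov model (equivalently, $d$-semistability and triviality of $K_{\cX}$). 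Isolating that it is exactly this numerical identity which is responsible, and carrying out the matrix bookkeeping cleanly, is the step I expect to be the main obstacle.

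For \textbf{(d)}, granting the above, the structure is well defined on an open neighborhood $U$ of the full $1$-skeleton, strict and integral affine, restricting to the chart of $B_i$ on $U\cap B_i$ and to $\phi_{ijk}$ near each vertex. Its holonomy around the boundary loop of any $2$-cell $B_i$ can then be computed inside the chart of $B_i$ itself, which is strict near $\partial B_i$ since no node lies near the $1$-skeleton, so it is trivial; since $\Delta_{\cX}\cong S^2$ is obtained from its $1$-skeleton by attaching the $2$-cells, triviality of the holonomy around every $2$-cell boundary forces the holonomy of the structure on $U$ to be trivial, and gluing it to the nodal affine structures on the $B_i$ produces an integral affine structure on all of $\Delta_{\cX}$, smooth away from the interior nodes $p_i$ of the $B_i$ and with the shear monodromy around each $p_i$; this proves existence. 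For uniqueness, any extension agreeing with the canonical structure near the vertices restricts on each $B_i$ to the given structure and near each open edge to a strict integral affine structure restricting to the two sides, hence by \textbf{(a)} differs from the one built above by a locally constant integer shear along each edge; agreement with $\phi_{ijk}$ near $v$ forces that shear to vanish, so the extension is unique.
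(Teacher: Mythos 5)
Your reduction in steps (a), (b), (d) is sound, and your step (c) correctly isolates the crux: as Remark~\ref{rmkNoMonodromy} indicates, the entire content of the theorem is triviality of the monodromy around the loop of Figure~\ref{fig:NoMonodromy}, and the identity you invoke, $(X_{ij}^2)_{X_i}+(X_{ij}^2)_{X_j}=-2$, is the right numerical avatar of it (it follows by restricting $\mathcal{O}_{\cX}(\cX_0)|_{\cX_0}\cong\mathcal{O}_{\cX_0}$ to the double curve, together with the fact that in a type III model each double curve carries exactly two triple points). But as written the argument is incomplete precisely at this point: the monodromy computation is announced and its outcome asserted, and you yourself flag the bookkeeping as the expected obstacle. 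Two things would have to be supplied to close it. First, the claim that the corner transitions of $B_i$ along $e_{ij}$ are governed by $(X_{ij}^2)_{X_i}$ needs justification in the almost toric (not toric) setting: it holds because a one-sided neighborhood of $e_{ij}$ in $B_i$ is node-free, so a neighborhood of the sphere $X_{ij}$ in $X_i$ fibers over it and the degree of its normal bundle is read off from the primitive directions $u,u'$ of the adjacent edges via $u+u'=-(X_{ij}^2)_{X_i}\,v$; one must check that interior nodes and their branch cuts can be kept away from this strip and fix orientation conventions. Second, the actual product of $\ASL(2,\Z)$ transitions (the two vertex transitions determined by $\phi_{ijk}$, $\phi_{ijk'}$, in which the three emanating edge directions sum to zero in $\mathfrak{h}^*$, composed with the two corner relations) must be computed and shown to collapse to the identity exactly when $a_i+a_j=-2$. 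Until then this is a plan rather than a proof.

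For comparison, the paper takes a genuinely different route that avoids the matrix bookkeeping altogether. For each edge it considers the rank-$3$ lattice $\mathcal{A}$ of piecewise integral linear functions on a neighborhood of the edge, associates to each $f\in\mathcal{A}$ a circle action on $T\cX|_{X_{ij}}$, and defines a weight $wt(f)\in\Z$ as the degree of the induced action on the Calabi--Yau volume form $\Omega$; the kernel of $wt$ is declared to be the lattice of affine-linear functions, and compatibility with the canonical vertex charts of Definition~\ref{dfCanIntAff} follows because those are characterized as the functions whose induced action preserves the standard volume form on $\C^3$. Since the defining condition comes from a single global object (triviality of $K_{\cX}$, the same input underlying the triple point formula you use), triviality of the monodromy is automatic rather than verified by computation. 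Your route, once the two missing steps are carried out, would give an alternative proof with the virtue of making the numerical mechanism explicit; the paper's buys uniformity and no case-by-case matrix work.
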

\begin{rmk}\label{rmkNoMonodromy}
      The crux of the statement amounts to triviality of the monodromy of the integral affine structure on the union of the complement of the 1-skeleton with neighborhoods of the vertices. See Figure~\ref{fig:NoMonodromy}. By a version of the Gauss-Bonnet Theorem in \cite[§6.5]{KontsevichSoibelman06} the statement would not be true for the case of the boundary of a Delzant polytope.   
\end{rmk}

\begin{proof}[Proof of Theorem \ref{tmNAffine}]
    We construct a chart in a neighborhood of each edge and show it agrees with the canonical structure near the vertices. Fix $e_{ij}:=\mu(X_{ij})$ an edge of the $1$ skeleton. Let $U\subset B_i\cup B_j\subset B$ be an open neighborhood of the interior of $e_{ij}$ containing no nodes. We consider continuous functions on $U$ whose restriction to each of $U\cap B_i$, $U\cap B_j$ is affine. To reduce the amount of freedom,  we fix a basepoint in the interior of $e_{ij}$ and we consider only the piecewise integral linear functions. Namely, piecewise integral affine functions which vanish at the basepoint and whose restriction to each side of the edge is affine.  The space $\mathcal{A}$ of all these is a free Abelian lattice of rank $3$. Indeed, a linear function is determined by two coefficients. $\mathcal{A}$ consists of pairs of linear functions that agree and the edge $e_{ij}$, and so there are $3$ degrees of freedom. Singling out an integral affine structure on $U$ which agrees with the piecewise nodal integral affine structure already given means declaring which of these piecewise linear functions are considered to be linear functions.
This amounts to specifying a rank 2 sublattice of $\mathcal{A}$ whose restriction to each side is the lattice of linear functions on that side near the edge.
We now define a homomorphism $wt:\mathcal{A}\to \Z$, the \emph{weight}, which will single out the desired sublattice. 

    To define the weight homomorphism, we observe that each element $f$ of $\mathcal{A}$ gives rise to a circle action $\rho_f$ on the restriction of the tangent bundle of the total space $T\cX|_{X_{ij}}$.  That is, an action on $X_{ij}$ which lifts to a linear action on the fibers. To see this, let $f_k=f_k|_{B_k}$ for $k=i,j$. Then $f_k\circ\mu$ is the Hamiltonian of a circle action on $X_k$ near the $1$-dimensional stratum. The linearization of this action gives circle actions on the vector bundles $TX_i|_{X_{ij}}$ and $TX_j|_{X_{ij}}$. The continuity equation $f_i|_{X_{ij}}=f_j|_{X_{ij}}$ implies that these two linearized actions agree on the common subbundle $TX_{ij}$, because on $TX_{ij}$ both are the differential of the same circle action on $X_{ij}$. Since $X_i$ and $X_j$ meet transversely along $X_{ij}$, we have a canonical identification
    \[
    T\cX|_{X_{ij}}\cong \left(TX_i|_{X_{ij}}\oplus TX_j|_{X_{ij}}\right)\Big/\{(v,-v):v\in TX_{ij}\}.
    \]
   The map $[u_i,u_j]\mapsto [\rho_i(\theta)\,u_i,\rho_j(\theta)\,u_j]$ for each $\theta\in S^1$ is well defined on the quotient and defines the required circle action $\rho_f$ on $T\cX|_{X_{ij}}$.

    Pick any point $p\in X_{ij}$ and any basis $v_1,v_2,v_3$ to $T_p\cX$ over the complex numbers. Let $\Omega$ be the Calabi-Yau form on $\cX$. Then we get map $\lambda_f:S^1\to S^1$ by $\theta\mapsto\arg\left(\Omega(\rho_f(\theta)\cdot (v_1\wedge v_2\wedge v_3))\right)$. Define $wt(f):= deg(\lambda_f)$. The following are easy to verify:
    \begin{itemize}
        \item The definition of $wt$ is independent of the choice of $p$ or the basis $\{v_1,v_2,v_3\}$ since any for any two choices the corresponding maps $\lambda_f$ are connected by a homotopy. It is also independent of the choice of $U$
        \item $wt:\mathcal{A}\to\Z$ is a homomorphism.
        \item $wt$ is surjective. 
    \end{itemize}

    We now show that $\ker(wt)$, when restricted to each side, gives exactly the lattice of linear functions on that side. Fix the side $U\cap B_i$ (the argument for $U\cap B_j$ is identical). Let $\mathcal A_i:=\{g\in\mathcal A: g|_{U\cap B_i}=0\}$; this is rank $1$. For nonzero $g\in\mathcal A_i$, the induced action is trivial on $TX_i|_{X_{ij}}$ but nontrivial on the normal line of $X_{ij}$ inside $X_j$, so $wt(g)\neq 0$ (for a primitive generator, $wt(g)=\pm 1$). Now let $\ell$ be any linear function on $U\cap B_i$, and choose $f\in\mathcal A$ with $f|_{U\cap B_i}=\ell$. Pick a primitive $g\in\mathcal A_i$. Then
    \[
    f':=f-wt(f)\,g
    \]
    still satisfies $f'|_{U\cap B_i}=\ell$, and $wt(f')=0$, so $f'\in\ker(wt)$. Hence every linear function on $U\cap B_i$ is the restriction of an element of $\ker(wt)$. Since $wt$ is surjective, $\ker(wt)$ is rank $2$. We therefore declare $\ker(wt)$ to be the lattice of linear functions, and this defines the integral affine structure on $U$.

    It remains to verify compatibility of this chart with the canonical integral affine structure near each vertex. This follows from the following observations:
    \begin{itemize}
        \item The integral affine functions for the canonical structure near a vertex can equivalently be characterized as being those whose induced action preserves the standard volume form on $\C^3$.
        \item For $p=X_{ijk}$, the volume form $\Omega_p$ equals the one induced from $\C^3$ by any local chart up to multiplication by a scalar, which does not affect the computation of the weight. 
    \end{itemize}
\end{proof}

\subsection{Continuity of parallel transport}\label{SubsecTransportContinuity}

Fix a point $x\in \bD$ and let $\gamma:[-1,0]\to \bD$ be a smooth path such that $\gamma(-1)=x$, $\gamma(0)=0\in\bD$ and such that $\gamma|_{[-1,0)}\subset\bD^*.$ By $\omega$-tameness, the map $\pi$ has symplectic fibers. Thus, for any $t\in[-1,0)$ we denote by $P_{\pi,\gamma,t}:\cX_x\to \cX_t$ the symplectic parallel transport. It is a symplectomorphism, and for homotopic paths with fixed endpoints, these symplectomorphisms are related by a Hamiltonian diffeomorphism. (See \cite{MS} pg.266 for more details on symplectic parallel transport).

We wish to analyze the behavior of parallel transport all the way to the singular fiber. It turns out, as we shall see, that for this to be well behaved at the lower dimensional strata of $\cX_0$ we need the symplectic form to be compatible there. Since $\omega$ is only assumed to satisfy this near the $0$-dimensional strata, we introduce the following notation. For each double curve $X_{ij}$, Let $C_{ij}\subset X_{ij}$ be a cylinder (homeomorphic to $S^1\times[0,1]$) with the property that 
\begin{itemize}
   \item $\omega$ is compatible with $J$ in a neighborhood of each point of $X_{ij}\setminus C_{ij}$. 
   \item The image $I_{ij}:=\mu_i(C_{ij})$ under the almost toric fibration $\mu_i$ is a closed sub-interval in the interior of the edge $e_{ij}\subset B$
\end{itemize}

We define the quotient spaces
\begin{align*}
\hat{\cX}_0 &:= \cX_0 \big/\!\!\sim, \quad\text{where } p\sim q \iff p=q \text{ or } \{p,q\}\subset C_{ij} \text{ for some } i,j,\\
\hat{B} &:= B \big/\!\!\sim, \quad\text{where } p\sim q \iff p=q \text{ or } \{p,q\}\subset I_{ij} \text{ for some } i,j.
\end{align*}
Let $\hat{p}_{ij}\in\hat{\cX}_0$ and $\hat{q}_{ij}\in\hat{B}$ denote the images of $C_{ij}$ and $I_{ij}$ under the respective quotient maps, and set $\hat{A}:=\{\hat{q}_{ij}\}$. The map $\mu_0:\cX_0\to B$ descends to a continuous map $\hat{\mu}_0:\hat{\cX}_0\to\hat{B}$. Moreover, $\hat{B}\setminus\hat{A}$ carries an integral affine structure and  $\hat{\mu}_0$ is an almost toric fibration over the complement $\hat{B}\setminus\hat{A}$.

\begin{lemma}\label{lmTrasprtContinuity1}
The map $P_{\pi,\gamma}:\cX_x\to\hat{\cX}_0$ defined by $z\mapsto[\lim_{t\to 0} P_{\pi,\gamma,t}(z)]$ is well defined and continuous. Moreover, 
on the preimage of any neighborhood of a nonsingular 
point of $\cX_0$ it is a 
symplectomorphism onto the image. Furthermore, the map $\Phi:\cX_{\gamma(-1)}\times[-1,0]\to\hat{\cX}_0$ defined by 
\[
\Phi(w,t):=\begin{cases}
P_{\pi,\gamma,t}(w) & \text{if }t\in[-1,0),\\
P_{\pi,\gamma}(w) & \text{if }t=0,
\end{cases}
\]
is continuous.
\end{lemma}
\begin{proof}
We establish well-definedness and continuity in the following three steps.

\medskip\noindent\textit{Step 1: Trapping neighborhoods.}
We show that for each $p\in\cX_0\setminus\bigcup_{i,j}C_{ij}$ and any open neighborhood $V\subset \cX$ of $p$ there exist an open neighborhood $U\ni p$ in $\cX$ with the following \emph{trapping property}: any parallel transport path $y(t)=P_{\pi,\gamma,t}(z)$ satisfying $y(t_0)\in U$ for some $t_0$ sufficiently close to $0$ remains in $V$ for all $t\in[t_0,0)$.

\emph{Case (a): $p$ is a smooth point of $\cX_0$.}
By shrinking $V$ we may assume that $\pi|_V$ is a submersion. The symplectic parallel transport along the reversed path $\bar\gamma(s)=\gamma(-s)$ is well defined on $\cX_0\cap V$. By smooth ODE theory (continuous dependence on initial conditions), paths starting in a sufficiently small neighborhood $W\subset \cX_0\cap V$ of $p$ remain in $V$ for sufficiently small time $t_0$. We may take $U$ to be the sweepout of $W$ by the flow parallel transport along $\bar\gamma|_{[t_0,0)}$.

\emph{Case (b): $p\in X_{ij}\setminus C_{ij}$ for some $i,j$.}
By definition of $C_{ij}$, there exists a neighborhood $V\ni p$ in $\cX$ on which $\omega$ is compatible with $J$.  Write $\pi=\pi_1+i\pi_2$ and let $\nabla\pi_1,\nabla\pi_2$ be the gradients with respect to the K\"ahler metric determined by $J$ and $\omega|_V$.  Since $\pi$ is holomorphic and $\omega|_V$ is compatible, these gradients span the symplectic complement of the fiber, giving the horizontal lift formula
\begin{equation}\label{eqHolHorLift}
y'(t)=\left(\gamma'_1(t)\nabla \pi_1(y(t))+\gamma'_2(t)\nabla \pi_2(y(t))\right)/|\nabla \pi|^2.
\end{equation}
By a direct calculation of the gradient of $z_1\dots z_n$, for a fibration $\pi$ with normal crossings singularities,
\begin{equation}\label{eqNormalCrossing}
|\nabla \pi|>C|\pi|^{1-1/n}
\end{equation}
for some constant $C=C(\pi)$ and $n$ the complex dimension, yielding
\begin{equation}\label{eqParrallelTransportBound}
    |y'|\leq|\nabla\pi||\gamma'(t)|/|\nabla\pi|^2<\frac1{C}|\gamma(t)|^{\left(\frac1{n}-1\right)}|\gamma'(t)|.
\end{equation}
This bound holds as long as $y(t)\in V$. Reparametrizing $\gamma$ as $t\mapsto t^ne^{i\theta(t)}$ makes the right-hand side bounded by a constant $C'$, so the total arc length of $y$ on $[t_0,0)$ is at most $C'|t_0|$ while the path stays in $V$. Take $V$ to contain the open ball of radius $2\epsilon$ around $p$ and $U$ the concentric ball of radius $\epsilon$. If $y(t_0)\in U$ and $t_0$ is close enough to $0$ that $C'|t_0|<\epsilon$, then $y$ cannot exit $V$ during $[t_0,0)$.

\medskip\noindent\textit{Step 2: Well-definedness.}
Fix $z\in\cX_x$ and let $y(t)=P_{\pi,\gamma,t}(z)$. Since $\pi^{-1}(\gamma)$ is compact, the $\omega$-limit set
$$\Lambda:=\bigcap_{s<0}\,\overline{\{y(t):t\in[s,0)\}}$$
is non-empty and connected. We claim $\Lambda$ maps under the quotient map to a single point of $\hat{\cX}_0$.

Suppose $p\in\Lambda\setminus\bigcup_{i,j}C_{ij}$, so $y(t_k)\to p$ for some $t_k\to 0^-$. By the trapping property, for $k$ large enough $y$ remains in $V$ for all $t\in[t_k,0)$ where $V$ is any open neighborhood of $p$. So, $\lim_{t\to 0} y(t)=p$.

If instead $\Lambda\cap(\cX_0\setminus\bigcup_{i,j}C_{ij})=\emptyset$, then $\Lambda\subset\bigcup_{i,j}C_{ij}$. Since the $C_{ij}$ are pairwise disjoint and $\Lambda$ is connected, $\Lambda$ lies in a single component $C_{ij}$. In either case, $\Lambda$ maps to a single point in $\hat{\cX}_0$. Hence $P_{\pi,\gamma}(z):=[\Lambda]\in\hat{\cX}_0$ is well-defined.

\medskip\noindent\textit{Step 3: Continuity.}
Fix $z\in\cX_x$ and let $p=P_{\pi,\gamma}(z)\in\hat{\cX}_0$. If $p$ lies in the top stratum of $\cX_0$, parallel transport is smooth near $p$ and continuity follows from standard ODE theory. 

For the lower strata, we distinguish two cases. If $p\in X_{ij}\setminus C_{ij}$ for some $i,j$, then the limit $\lim_{t\to 0}y(t)=p$ exists in $\cX_0$. Let $V$ be a neighborhood of $p$ in $\cX_0$. Let $V'$ be a neighborhood of $p$ in $\cX$ such that $ V=V'\cap \cX_0$. Let $U$ and $t_0$ be as in the trapping property for $V'$. By continuous dependence on initial conditions on $[x,t_0]$, for $z'$ sufficiently close to $z$ we have $P_{\pi,\gamma,t_0}(z')$ lands in $U$. Hence by the trapping property $\lim_{t\to 0} P_{\pi,\gamma,t}(z')\in V'\cap\cX_0=V$.

The remaining case is when $p=\hat{p}_{ij}$, meaning the $\omega$-limit set $\Lambda$ of $y(t)=P_{\pi,\gamma,t}(z)$ lies entirely in the compact cylinder $C_{ij}$. We argue by contradiction. Suppose $P_{\pi,\gamma}$ is not continuous at $z$. Then there exists an open neighborhood $\hat{A}$ of $\hat{p}_{ij}$ in $\hat{\cX}$ and a sequence $z_k \to z$ in $\cX_x$ such that for all $k$ we have $P_{\pi,\gamma}(z_k) \notin \hat{A}$. Let $A \subset \cX$ be the preimage of $\hat{A}$ under the quotient map. Since $C_{ij}$ is compact and $C_{ij} \subset A$, we can choose an open neighborhood $W$ of $C_{ij}$ such that $C_{ij} \subset W \subset \overline{W} \subset A$. Moreover, we can assume that $\partial W$ is disjoint from each $C_{kl}$.

Since the $\omega$-limit set $\Lambda$ of $y(t)$ is contained in $C_{ij}$, there exists a time $t_1 < 0$ such that $y(t) \in W$ for all $t \in [t_1, 0)$. Let $y_k(t) = P_{\pi,\gamma,t}(z_k)$. By continuous dependence on initial conditions of the parallel transport equation on the compact interval $[x, t_1]$, we have $y_k(t_1) \to y(t_1) \in W$. Thus, for all sufficiently large $k$, $y_k(t_1) \in W$.

However, since $P_{\pi,\gamma}(z_k) \notin \hat{A}$, the $\omega$-limit set of $y_k$ is not contained in $A$, and consequently not contained in $W$. Therefore, each path $y_k$ must eventually exit $W$. Let $s_k \in (t_1, 0)$ be the \emph{first} time $y_k(t)$ exits $W$, meaning $y_k(s_k) \in \partial W$. Because the sequence $s_k$ is bounded, we can pass to a subsequence such that $s_k \to s^* \in [t_1, 0]$. If $s^* < 0$, then $y_k(s_k) \to y(s^*)$ by continuous dependence on initial conditions for closed intervals in $[-1,0)$. Since $\partial W$ is closed, this would imply $y(s^*) \in \partial W$, which directly contradicts that $y(t) \in W$ for all $t \in [t_1, 0)$. Therefore, we must have $s^* = 0$.

Now consider the sequence of boundary points $y_k(s_k) \in \partial W$. Since $\pi(y_k(s_k)) = \gamma(s_k) \to \gamma(0) = 0$, properness of $\pi$ near $\cX_0$ implies that, passing to a further subsequence, $y_k(s_k)$ converges to some point $q \in \partial W \cap \cX_0$. Because $W$ is a neighborhood of $C_{ij}$, the point $q$ is disjoint from $C_{ij}$, so $q \in \cX_0 \setminus \bigcup_{k,l} C_{kl}$. Furthermore, since $\overline{W} \subset A$, we have $q \in A$.

Since $A$ is an open neighborhood of $q$, choose an open neighborhood  $V_q\subset A$ of $q$ satisfying the trapping property from Step~1 for some time $t_q < 0$. Since $y_k(s_k) \to q$ and $s_k \to 0$, for all sufficiently large $k$ we have $y_k(s_k) \in V_q$. The trapping property then guarantees that $y_k(t) \in V_q$ for all $t \in [s_k, 0)$, so the $\omega$-limit set of $y_k$ lies in $A$ and therefore $P_{\pi,\gamma}(z_k)\in\hat{A}$, contradicting our initial assumption. This completes the proof of continuity of $P_{\pi,\gamma}$.

\medskip\noindent\textit{Joint continuity.}
To prove continuity of $\Phi:\cX_{\gamma(-1)}\times[-1,0]\to\hat{\cX}_0$, it suffices to verify continuity at points of the form $(w,0)$, since $\Phi$ is continuous on $\cX_{\gamma(-1)}\times[-1,0)$ by smooth dependence of parallel transport on initial conditions. Fix $(w,0)$ and let $\hat{A}$ be an open neighborhood of $\Phi(w,0)=P_{\pi,\gamma}(w)$ in $\hat{\cX}_0$. The continuity of $P_{\pi,\gamma}$ established above provides an open neighborhood $U\subset\cX_{\gamma(-1)}$ of $w$ such that $P_{\pi,\gamma}(U)\subset\hat{A}$. By the same trapping arguments used in Step~3, there exists $\delta>0$ such that for all $w'\in U$ and $t\in[-\delta,0]$, we have $\Phi(w',t)\in\hat{A}$. This establishes continuity of $\Phi$ at $(w,0)$.

\end{proof}

For later use, we record the following description of neighborhoods of the points $\hat{p}_{ij}\in\hat{\cX}_0$ obtained by collapsing the cylinders $C_{ij}$.

\begin{lemma}\label{lmTubularHatPoint}
Fix a double curve $X_{ij}$ and the corresponding cylinder $C_{ij}\subset X_{ij}$, and let $\hat{p}_{ij}\in\hat{\cX}_0$ be its image under the quotient map $q:\cX_0\to\hat{\cX}_0$.  Let $N_{ij}\subset\cX$ be a tubular neighborhood of $C_{ij}$. For any open neighborhood $U\subset\cX_0$ of $C_{ij}$ satisfying $\overline{U}\subset N_{ij}$ there exists a constant $\delta>0$ such that for all
$t\in[-\delta,0)$ and all $z\in\cX_{\gamma(t)}$ satisfying
$$
\lim_{t\to 0} P_{\pi,\gamma|_{[t,0)},t}(z)\in\hat{U}=q(U)
$$
we have $z\in N_{ij}$.
\end{lemma}
\begin{proof}
Set $W:=P_{\pi,\gamma}^{-1}(\overline{\hat{U}})\subset\cX_{\gamma(-1)}$. By continuity of the map $\Phi:\cX_{\gamma(-1)}\times[-1,0]\to\cX$ and compactness of $W$, there is a $\delta>0$ such that $P_{\pi,\gamma,t}(w)\in N_{ij}$ for all $w\in W$ and $t\in[-\delta,0)$. The conclusion follows since $z\in\cX_{\gamma(t)}$ satisfies $\lim_{s\to 0}P_{\pi,\gamma|_{[s,0)},s}(z)\in\hat{U}$ if and only if $P_{\pi,\gamma,t}^{-1}(z)\in W$.
\end{proof}

\begin{lemma}\label{lmTrasprtContinuity2} 
    The statement of Lemma \ref{lmTrasprtContinuity1} remains true if we replace $\pi$ by another map $\tilde{\pi}:\cX\to\C$ which has symplectic fibers and satisfies the following conditions
    \begin{itemize}
        \item $$C|\pi|>|\tilde{\pi}|$$ for some $C>0$ and for all $x\in \cX,$
        \item 
        $$C|\nabla\tilde{\pi}|> |\nabla\pi|,$$
        \item  
         writing $\nabla\tilde{\pi}_1$, $\nabla\tilde{\pi}_2$ for the gradients of the real and imaginary part,
        $$\langle \nabla\tilde{\pi}_1, J\nabla\tilde{\pi}_2\rangle> \frac1{C}|\nabla\tilde{\pi}|^2. $$
    \end{itemize}
\end{lemma}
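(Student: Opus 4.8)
The plan is to re-run the proof of Lemma \ref{lmTrasprtContinuity1} essentially verbatim, noting that holomorphicity of $\pi$ was used there in exactly two places: to identify the symplectic complement of the fibres (so as to write down the horizontal lift \eqref{eqHolHorLift}) and to obtain the decay bound \eqref{eqParrallelTransportBound}. Both of these are recovered from the three hypotheses. Let $\gamma(t)=\gamma_1(t)+i\gamma_2(t)$ as before and let $y(t)$ be a lift of $\gamma$ for $\tilde\pi$, so $\tilde\pi(y(t))=\gamma(t)$ and $y'(t)$ lies in the $\omega$-orthogonal complement of $\ker d\tilde\pi_{y(t)}$.

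First I would identify that complement. Since $\omega$ is K\"ahler, $\omega(Jv,w)=\pm\langle v,w\rangle$, so for any $v\in\ker d\tilde\pi$ and $k=1,2$ one has $\omega(J\nabla\tilde\pi_k,v)=\pm\langle\nabla\tilde\pi_k,v\rangle=\pm d\tilde\pi_k(v)=0$. The third hypothesis forces $\nabla\tilde\pi_1,\nabla\tilde\pi_2$ to be linearly independent and nonzero (as $\langle w,Jw\rangle=0$ identically, a dependence would make $\langle\nabla\tilde\pi_1,J\nabla\tilde\pi_2\rangle$ vanish), so $J\nabla\tilde\pi_1,J\nabla\tilde\pi_2$ span the $2$-dimensional symplectic complement of $\ker d\tilde\pi$. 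Writing $y'(t)=\alpha\,J\nabla\tilde\pi_1+\beta\,J\nabla\tilde\pi_2$ and imposing $d\tilde\pi_k(y'(t))=\gamma_k'(t)$ gives, with $D:=\langle\nabla\tilde\pi_1,J\nabla\tilde\pi_2\rangle$, that $\beta=\gamma_1'(t)/D$ and $\alpha=-\gamma_2'(t)/D$ (the diagonal terms $\langle\nabla\tilde\pi_k,J\nabla\tilde\pi_k\rangle$ vanish). Since $J$ is an isometry and $|\nabla\tilde\pi_k|\le|\nabla\tilde\pi|$, this yields
\begin{equation}
|y'(t)|\le(|\alpha|+|\beta|)\,|\nabla\tilde\pi|\le\frac{\sqrt2\,|\gamma'(t)|}{|D|}\,|\nabla\tilde\pi|.
\end{equation}

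Now I would feed in the hypotheses in order. The third hypothesis $|D|>\tfrac1C|\nabla\tilde\pi|^2$ cancels one power of $|\nabla\tilde\pi|$, giving $|y'(t)|<\sqrt2\,C\,|\gamma'(t)|/|\nabla\tilde\pi|$; the second, $|\nabla\tilde\pi|>\tfrac1C|\nabla\pi|$, gives $|y'(t)|<\sqrt2\,C^2\,|\gamma'(t)|/|\nabla\pi(y(t))|$; and since $\tilde\pi(y(t))=\gamma(t)$, the first hypothesis gives $|\pi(y(t))|>\tfrac1C|\gamma(t)|$, so the normal-crossings estimate \eqref{eqNormalCrossing} yields $|\nabla\pi(y(t))|>C'|\pi(y(t))|^{1-1/n}>C''|\gamma(t)|^{1-1/n}$. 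Combining, $|y'(t)|<C_1|\gamma(t)|^{\frac1n-1}|\gamma'(t)|$, which is precisely \eqref{eqParrallelTransportBound}. From here the argument is identical to that of Lemma \ref{lmTrasprtContinuity1}: reparametrize $\gamma$ as $t\mapsto t^ne^{i\theta(t)}$ so that $|y'|$ is bounded, conclude that the limit exists (Cauchy), that it depends continuously on the initial lift via the Gronwall-type estimate for the parallel-transport vector field, and that over a neighborhood of a nonsingular point of $\cX_0$, where the $P_{\tilde\pi,\gamma,t}$ are genuine symplectomorphisms, the limit is a symplectomorphism onto its image.

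The main obstacle is the bookkeeping of the middle step, and in particular the mismatch that $y(t)$ is a lift for $\tilde\pi$ while the only decay estimate at hand, \eqref{eqNormalCrossing}, concerns the normal-crossings map $\pi$: bridging the two is exactly the purpose of the comparison hypothesis $C|\pi|>|\tilde\pi|$. Conceptually, the point to verify cleanly is that the third hypothesis is the correct non-degeneracy condition — that $\langle\nabla\tilde\pi_1,J\nabla\tilde\pi_2\rangle$ is, up to the metric, the "symplectic Jacobian" of $\tilde\pi$ along the horizontal distribution, and that its positivity both makes the horizontal lift well defined and controls its size. One should also note, as in the proof of Lemma \ref{lmTrasprtContinuity1}, that after reparametrization the lift vector field retains a local Lipschitz bound uniform as $t\to 0$, which is what makes the continuity-in-initial-condition conclusion go through.
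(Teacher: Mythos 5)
Your proposal is correct and follows essentially the same route as the paper: generalize the horizontal-lift formula to the non-holomorphic $\tilde{\pi}$ with $\langle\nabla\tilde{\pi}_1,J\nabla\tilde{\pi}_2\rangle$ in the denominator (hypothesis three), then chain the first two hypotheses through the normal-crossings bound \eqref{eqNormalCrossing} to recover \eqref{eqParrallelTransportBound} and finish as in Lemma \ref{lmTrasprtContinuity1}. Your explicit solve for the coefficients (with the correct sign, where the paper's displayed formula has a sign/notation slip) and your rearrangement of the inequalities via $|\nabla\pi|$ and $|\pi|$ rather than first proving \eqref{eqNormalCrossing} for $\tilde{\pi}$ are only cosmetic differences.
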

\begin{proof}
    We wish to obtain the estimate \eqref{eqParrallelTransportBound} with $y(t)$ the horizontal lift with respect to $\tilde{\pi}$. The first two estimates assumed in the statement of the present  lemma allow to deduce the estimate \eqref{eqNormalCrossing} for $\tilde{\pi}$ from the same estimate for $\pi$. 
    To obtain the middle estimate of \eqref{eqParrallelTransportBound}
    we first derive a formula for the horizontal lift for a non-holomorphic function. We claim that equation \eqref{eqHolHorLift} for the horizontal lift generalizes to
    \begin{equation}
        y'(t)=\frac{\gamma'_1(t)J\nabla \pi_2(y(t))+\gamma'_2(t)J\nabla \pi_1(y(t))}{\langle \nabla\tilde{\pi}_1, J\nabla\tilde{\pi}_2\rangle.}
\end{equation}
Indeed, one verifies this is symplectically orthogonal to $\ker d\tilde{\pi}$. Moreover, using $$
d\pi=\langle \nabla\tilde{\pi}_1, \cdot\rangle+i\langle \nabla\tilde{\pi}_2, \cdot\rangle
$$
one verifies it projects to $\gamma'(t)$ under $d\tilde{\pi}$.  We now readily deduce the estimate \eqref{eqParrallelTransportBound} for $\tilde{\pi}$. The rest of the proof is the same. 
    
\end{proof}

We can summarize the above results with the following corollary. 

\begin{cor}
   The map $\hat{\mu}_0\circ P_{\gamma}:\cX_{\gamma(-1)}\to\hat{B}$ is continuous for any $\tilde{\pi}$ satisfying the estimates of Lemma \ref{lmTrasprtContinuity2}. Moreover, the restriction to the pre-image of the interior of any face $B_i$ is an almost toric fibration.
\end{cor}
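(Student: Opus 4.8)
The plan is to obtain both assertions directly from the results of this section, with essentially no new input. Continuity is immediate: $\mu\circ P_\gamma$ is the composition of $P_\gamma:\cX_{\gamma(-1)}\to\cX_0$, which is continuous by Lemma \ref{lmTrasprtContinuity1} (and, with $\tilde\pi$ in place of $\pi$, by Lemma \ref{lmTrasprtContinuity2}), with the continuous map $\mu:\cX_0\to B$ furnished by Proposition \ref{lmInterpolation1}. So nothing beyond citing those results is needed here.

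For the almost toric statement I would first pin down the relevant set. By the construction of $\mu$ in Proposition \ref{lmInterpolation1}, $\mu^{-1}(\mathrm{int}\,B_i)$ is contained in $X_i$ and coincides with $\mu_i^{-1}(\mathrm{int}\,B_i)$, since for $j\neq i$ the face $B_j=\mu(X_j)$ meets $\mathrm{int}\,B_i$ only inside $\partial B_i$. Consequently every point of this set is a \emph{nonsingular} point of $\cX_0$: it avoids the double curves $X_{ij}$, whose images lie in $\partial B_i$. This is the one geometric observation the argument rests on, because it lets us invoke the last sentence of Lemma \ref{lmTrasprtContinuity1} (valid for $\tilde\pi$ by Lemma \ref{lmTrasprtContinuity2}): $P_\gamma$ is a local symplectomorphism on the open set $(\mu\circ P_\gamma)^{-1}(\mathrm{int}\,B_i)=P_\gamma^{-1}\!\big(\mu_i^{-1}(\mathrm{int}\,B_i)\big)$.

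The next step is to promote this to a genuine symplectomorphism onto $\mu_i^{-1}(\mathrm{int}\,B_i)$. Both surjectivity and injectivity follow from the backward horizontal lift used in the proof of Lemma \ref{lmTrasprtContinuity1}: for $q\in\mu_i^{-1}(\mathrm{int}\,B_i)$, a nonsingular point of $\cX_0$, equation \eqref{eqHolHorLift} (or its non-holomorphic analogue from Lemma \ref{lmTrasprtContinuity2}) has a unique solution $y(t)$ with $y(0)=q$, and the uniform bound $|y'|<C'$ obtained there keeps $y$ in a compact region, so it extends over all of $[-1,0]$; then $P_\gamma(y(-1))=q$ gives surjectivity, while uniqueness of $y$ gives injectivity. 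A continuous bijection that is a local homeomorphism is a homeomorphism, so $P_\gamma$ restricts to a symplectomorphism $(\mu\circ P_\gamma)^{-1}(\mathrm{int}\,B_i)\to\mu_i^{-1}(\mathrm{int}\,B_i)$.

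Finally, by Proposition \ref{lmInterpolation1} the map $\mu$ restricts to an almost toric fibration $\mu^{-1}(\mathrm{int}\,B_i)\to\mathrm{int}\,B_i$ (restricting an ATF over the base-with-boundary $B_i$ to the open interior removes only the toric-type fibers and leaves an ATF), and on $(\mu\circ P_\gamma)^{-1}(\mathrm{int}\,B_i)$ the map $\mu\circ P_\gamma$ is exactly this almost toric fibration precomposed with the symplectomorphism $P_\gamma$. Since the almost toric condition is preserved under precomposition with a symplectomorphism, $\mu\circ P_\gamma$ restricts to an almost toric fibration over $\mathrm{int}\,B_i$, as claimed. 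I expect the only step needing real care to be the surjectivity of $P_\gamma$ in the third paragraph, i.e., that the backward horizontal lift reaches time $-1$ without escaping to $\partial B_i$ or into a further degeneration; but this is precisely what the uniform gradient bound \eqref{eqParrallelTransportBound} from the proof of Lemma \ref{lmTrasprtContinuity1} prevents.
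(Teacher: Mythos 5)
Your proposal is correct and follows exactly the route the paper intends: the paper states this corollary without a separate proof, as an immediate summary of Lemma \ref{lmTrasprtContinuity1}, Lemma \ref{lmTrasprtContinuity2} and Proposition \ref{lmInterpolation1}, which is precisely the composition-of-continuous-maps argument plus the observation that over $\mathrm{int}\,B_i$ one stays in the nonsingular locus of $\cX_0$, where $P_\gamma$ is a symplectomorphism and $\mu_i$ is an ATF. Your extra care about surjectivity and injectivity of $P_\gamma$ over the smooth locus (via backward horizontal lifts, which exist since $\tilde{\pi}$ is a submersion with symplectic fibers near such points and parallel transport over the punctured disk is complete by properness) only fills in details the paper leaves implicit.
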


\subsection{Smoothness of parallel transport}
Since the central fibre has normal crossing singularities the parallel transport map is generally not a smooth map for points that flow to lower dimensional strata of $\cX_0$, even where it's continuous. Our goal in this section is to modify $\pi$ to some $\tilde{\pi}$  so that $\hat{\mu}_1=\hat{\mu}_0\circ P_{\gamma}$ is smooth also near the nodal points. For this we use the following idea that has been attributed to Ruan (\cite{Ruan:2001}).

\begin{lemma}\label{Lemma:Ruansidea}
Let $(M,\om)$ be a symplectic manifold. Let $\pi:M \rightarrow \C$ be a projection with symplectic fibres. Let $H$ be a Hamiltonian function on $(M, \om)$ and suppose the  Hamiltonian flow of $H$ preserves the fibers of $\pi$. Then $H$ is preserved by symplectic parallel transport. That is
$$
H\left(P_t(x)\right)=H(x)
$$
for all $x \in M, t \in \R$, where $P_t$ is the time-t parallel transport symplectomorphism along any curve $\gamma$ in $\C$.
\end{lemma}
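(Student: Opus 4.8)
The plan is to exploit the standard relation between symplectic parallel transport and Hamiltonian flows along the fibers. Recall that the parallel transport vector field along a curve $\gamma$ is the unique lift $X^{\mathrm{hor}}$ of $\gamma'$ lying in the symplectic orthogonal complement to $T(\text{fiber})$, which for a fibration with symplectic fibers coincides with the $\omega$-orthogonal complement. The key elementary fact is that if $Y$ is a vector field tangent to the fibers of $\pi$, then $\omega(X^{\mathrm{hor}}, Y) = 0$ at every point, simply because $X^{\mathrm{hor}}$ lies in the symplectic orthogonal to the fiber tangent space and $Y$ lies in that tangent space.

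First I would observe that by hypothesis the Hamiltonian vector field $X_H$ is tangent to the fibers of $\pi$: indeed, since the Hamiltonian flow of $H$ preserves each fiber, its infinitesimal generator $X_H$ must be everywhere tangent to the fibers. Then I would compute the derivative of $H$ along the parallel transport trajectory. Writing $x(t) = P_t(x)$, we have
\[
\frac{d}{dt} H(x(t)) = dH_{x(t)}\bigl(X^{\mathrm{hor}}_{x(t)}\bigr) = \omega_{x(t)}\bigl(X_{H}(x(t)),\, X^{\mathrm{hor}}_{x(t)}\bigr),
\]
using the defining relation $dH = \omega(X_H, \cdot)$ (with whatever sign convention is fixed in the paper). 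Since $X_H(x(t))$ is tangent to the fiber through $x(t)$ and $X^{\mathrm{hor}}_{x(t)}$ is $\omega$-orthogonal to that fiber, the right-hand side vanishes. Hence $t \mapsto H(x(t))$ is constant, which is exactly the claim $H(P_t(x)) = H(x)$.

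The only genuinely delicate point is making sure the parallel transport trajectory is well-defined and smooth along the relevant curve $\gamma$, so that the computation of $\frac{d}{dt}H(x(t))$ is legitimate; but this is a standard fact for parallel transport over a smooth curve in the regular locus (and indeed the statement as phrased in the lemma takes $P_t$ along a curve in $\C$ where the fibration behaves well). So there is no real obstacle here: the argument is a one-line orthogonality computation once the two observations — $X_H$ tangent to fibers, and $X^{\mathrm{hor}}$ orthogonal to fibers — are in place. I would present it exactly in that order: (i) identify $X_H$ as fiber-tangent; (ii) recall the characterization of $X^{\mathrm{hor}}$; (iii) differentiate $H$ along the trajectory and invoke orthogonality to conclude. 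One should also remark that the conclusion is independent of the chosen curve $\gamma$, since the argument only used that the tangent to the trajectory is a horizontal lift, regardless of which curve downstairs it projects to.
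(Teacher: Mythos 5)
Your argument is correct and is exactly the intended one: $X_H$ is tangent to the fibres because each fibre is flow-invariant, the horizontal lift lies in the symplectic orthogonal complement of the fibre tangent space, so $\frac{d}{dt}H(P_t(x))=\omega(X_H,X^{\mathrm{hor}})=0$. The paper states this lemma (attributed to Ruan) without writing out a proof, and your one-line orthogonality computation is precisely the standard argument it relies on, so there is nothing to add.
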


Motivated by this, we proceed as follows. First, we discuss a \emph{local model for a vertex}. Consider the function $\pi_0:\C^3\to \C$ given by $\pi_0(z_1,z_2,z_3)=z_1z_2z_3$.  The function $\pi_0$ is invariant under the holomorphic volume-preserving torus action whose moment map is $\mu_0$.

Call a local symplectic chart near a triple intersection point $X_{ijk}$ admissible if it fits into the diagram of Proposition \ref{lmInterpolation1}. 
 
\begin{lemma}\label{Lemma:Interpolationoffibration}
    There exists a modification $\tilde{\pi}$ of $\pi$ such that 
   \begin{itemize}
        \item $\tilde{\pi}$ has symplectic fibres,
        \item for each triple intersection point  $X_{ijk}$ there is a local  admissible Darboux chart $\psi: U\subset \C^3\to V\subset \cX $ which satisfies $\pi_0 =\tilde{\pi}\circ \psi,$
    \item
    $\tilde{\pi}$ agrees with $\pi$ outside some symplectic balls,
    \item $\tilde{\pi}^{-1}(0)=\pi^{-1}(0)$,
    \item $\tilde{\pi}$ satisfies the estimates of Lemma
    \ref{lmTrasprtContinuity2}.
    \end{itemize}
\end{lemma}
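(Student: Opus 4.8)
The idea is to build $\tilde\pi$ by a local-to-global construction, interpolating between the standard monomial model $\pi_0(z_1,z_2,z_3)=z_1z_2z_3$ near each triple point and the original $\pi$ away from the vertices, then verifying the four auxiliary estimates. First I would fix, for each triple intersection point $p=X_{ijk}$, an admissible Darboux chart $\psi_{ijk}:U\subset\C^3\to V\subset\cX$ as produced by Proposition \ref{lmInterpolation1}; after possibly shrinking $U$ we may assume the $V$'s are disjoint. On $V$ we now have two maps to $\C$: the pushforward $(\psi_{ijk})_*\pi_0$ and the restriction $\pi|_V$. Both vanish exactly on the coordinate strata $\cup_j X_{ij}\cap V$ (the former by construction, the latter since $\cX_0=\pi^{-1}(0)$ is the normal crossings divisor), and both are holomorphic there, so their quotient is a nonvanishing holomorphic function $h$ on $V$ with $\pi=h\cdot(\psi_{ijk})_*\pi_0$. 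Shrinking $U$ further we may assume $h$ is $C^1$-close to a nonzero constant, and after rescaling the chart (composing $\psi_{ijk}$ with a linear map of $\C^3$ scaling the $z_i$) we may take that constant to be $1$.

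The core of the argument is the interpolation. Choose nested balls $B_1\subset B_2\subset\bar B_2\subset U$ around the origin in each chart and a cutoff $\beta:\C^3\to[0,1]$ equal to $1$ on $B_1$ and $0$ outside $B_2$. Define, in the chart, $\tilde\pi_{loc}:=\big(\beta + (1-\beta)h\circ\psi_{ijk}\big)\cdot(\psi_{ijk})_*\pi_0$; this equals $\pi_0$ (pushed forward) on $\psi_{ijk}(B_1)$ and equals $\pi$ on $\psi_{ijk}(B_2)^c\cap V$, so the local definitions glue with $\pi$ on the rest of $\cX$ to a global map $\tilde\pi:\cX\to\C$. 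By construction $\tilde\pi^{-1}(0)=\pi^{-1}(0)$ (the cutoff factor is nonvanishing since it is a convex-type combination of $1$ and the nonvanishing $h$ with the latter $C^1$-close to $1$, hence itself nonvanishing for $U$ small), $\tilde\pi$ agrees with $\pi$ outside the symplectic balls $\psi_{ijk}(B_2)$, and the second bullet of the Lemma holds on $\psi_{ijk}(B_1)$. It remains to arrange that $\tilde\pi$ has symplectic fibers and satisfies the estimates of Lemma \ref{lmTrasprtContinuity2}.

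For the symplectic-fiber condition and the estimates, the point is that all three hypotheses of Lemma \ref{lmTrasprtContinuity2} — the bounds $C|\pi|>|\tilde\pi|$, $C|\nabla\tilde\pi|>|\nabla\pi|$, and the lower bound $\langle\nabla\tilde\pi_1,J\nabla\tilde\pi_2\rangle>\tfrac1C|\nabla\tilde\pi|^2$ — as well as nondegeneracy of $\dbar$-type (equivalently that $\ker d\tilde\pi$ is symplectic) are \emph{open conditions} that hold for $\pi$ itself, hence persist under a sufficiently $C^1$-small perturbation. Since $\psi_{ijk}$ is a symplectomorphism $\pi_0$ pulled back has symplectic fibers where it is submersive, and on the normal crossings locus one checks the local model $z_1z_2z_3$ has the relevant fiberwise nondegeneracy directly (the fibers are symplectic away from the singular set, and the three estimates hold for $\pi_0$ by explicit computation, e.g.\ $\nabla\pi_0$ has the product structure making $\langle\nabla(\pi_0)_1,J\nabla(\pi_0)_2\rangle=|\nabla\pi_0|^2$). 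By choosing the balls $B_2$ small enough that $h$ is $C^1$-close to $1$ on them, the interpolated $\tilde\pi$ is $C^1$-close to $\pi$ near the vertices and literally equal to $\pi$ elsewhere; so all the open conditions are inherited. The main obstacle, and the step requiring genuine care rather than softness, is precisely this last uniformity: one must check that the estimate $|\nabla\tilde\pi|>C|\tilde\pi|^{1-1/n}$ (inequality \eqref{eqNormalCrossing}) degrades controllably across the interpolation region where $\tilde\pi$ is neither $\pi_0$ nor $\pi$ — i.e.\ that the cutoff does not destroy the gradient lower bound near the codimension-two and codimension-three strata where both $\tilde\pi$ and $|\nabla\tilde\pi|$ are small. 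This is handled by noting that in the interpolation annulus $\psi_{ijk}(B_2\setminus B_1)$ one is bounded away from the singular locus only in the $|z_i|$ that are cut off, while near each coordinate hyperplane the cutoff derivative contributes a term proportional to $|\pi_0|$ times $|\nabla\beta|$, which is dominated by the $h\cdot\nabla\pi_0$ term since $h$ is close to $1$; so the product structure of the local model, together with $C^1$-closeness of $h$ to a constant, gives \eqref{eqNormalCrossing} for $\tilde\pi$ with a constant comparable to that of $\pi_0$ and $\pi$.
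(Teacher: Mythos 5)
Your construction is, after unwinding, the same interpolation the paper uses: your $\tilde\pi_{loc}=(\beta+(1-\beta)h)\cdot(\psi_{ijk})_*\pi_0$ is literally $\beta\,\pi_0+(1-\beta)\,\pi$ in the chart, i.e.\ $\pi_0+(1-\beta)(\pi\circ\psi-\pi_0)$, which is the paper's $\pi_r=\pi_0+\rho_r g$. The gap is in the justification, and it starts at the very first step: you assert that $(\psi_{ijk})_*\pi_0$ is holomorphic so that the quotient $h=\pi/ (\psi_{ijk})_*\pi_0$ is a nonvanishing holomorphic function, $C^1$-close to a constant. But $\psi_{ijk}$ is only an admissible \emph{Darboux} (symplectic) chart, not a biholomorphism, so $\pi_0\circ\psi_{ijk}^{-1}$ is merely smooth. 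In fact, writing $\pi\circ\psi-\pi_0=\sum_I\pi_I h_I$ with $\pi_I=\prod_{i\in I}z_i\prod_{i\notin I}\overline z_i$ as in the paper, one gets $h=1+\sum_I(\pi_I/\pi_0)h_I$, and the factors $\pi_I/\pi_0=\prod_{i\notin I}\overline z_i/z_i$ are discontinuous along the coordinate hyperplanes away from the origin; so $h$ is not holomorphic, not $C^1$, and not even continuous on the divisor, and every later step that you route through properties of $h$ (nonvanishing of the prefactor to get $\tilde\pi^{-1}(0)=\pi^{-1}(0)$, $C^1$-closeness to $1$ for the perturbation argument) is unsupported as stated.

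The second, related, gap is the appeal to openness: the symplectic-fiber condition and the estimates of Lemma \ref{lmTrasprtContinuity2} are \emph{not} uniformly open near the central fiber, because all the relevant quantities ($|\nabla\pi|$, $\langle\nabla\pi_1,J\nabla\pi_2\rangle$, $|\pi|^{1-1/n}$) degenerate as one approaches the codimension-two and codimension-three strata, which is exactly where the interpolation takes place. A "$C^1$-small perturbation" is meaningless there; what is needed is smallness \emph{relative to} $|\partial\pi_0|$, which itself tends to zero. This is precisely why the paper does not argue softly: it first establishes the anisotropic normal form $g=\sum_I\pi_I h_I$ with $h_I=O(|z|)$ (using the holomorphic chart $\psi_{hol}$ with $\psi_{hol}^{-1}\circ\psi=\mathrm{Id}+O(|z|^2)$ and the fact that the $i$-th component minus $z_i$ vanishes on $\{z_i=0\}$, giving $\phi_i=z_i+z_ia_i+\overline z_ib_i$), and then verifies Donaldson's criterion $|\dbar\pi_r|<|\partial\pi_r|$ by bounding every error term, including the cutoff-derivative term of size $|\sum_I h_I\pi_I|\,C/r$, against $|\partial\pi_0|$, using $|\pi_I|=|\pi_0|$, $|\partial\pi_I|=O(|\partial\pi_0|)$ and $|h_I\pi_I|/|\partial\pi_0|=O(|z|^2)$. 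Your closing paragraph gestures at exactly this comparison, but it cannot be completed from "h is $C^1$-close to $1$" because that statement is false; you need the decomposition of $\pi\circ\psi-\pi_0$ into monomial-times-vanishing-coefficient pieces, i.e.\ the structural lemma that is the actual content of the paper's proof. (A minor additional issue: rescaling the chart by a linear map of $\C^3$ to normalize the constant would generally destroy the Darboux/admissibility property; the paper instead normalizes so that $d\psi_{hol}=d\psi$ at the origin.)
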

\begin{proof}

    For any triple intersection point we fix an admissible local symplectic chart $\psi: U\subset\C^3\to \cX$. Consider the function $g=\pi\circ \psi-\pi_0.$ For each $I\subset \{1,2,3\}$ introduce the notation  $\pi_I=\Pi_{i\in I}z_i\Pi_{i\not\in I}\overline{z}_i$. 
    We claim $g$ can be written WLOG in the form
    $$
    g=\sum_{I\subset\{1,2,3\}} \pi_I h_I
    $$
    where $h_I:U\to\C$ are smooth function which vanish at the origin at least to order $1$. To see this let $\psi_{hol}:U\to \cX$ be holomorphic coordinates pulling back $\pi$ to the function $\pi_0$. These exist by the normal crossing assumption. WLOG $\pi$ is normalized so $d\psi_{hol}=d\psi$. Let $f=\pi\circ\psi$ then
    $f=\pi_0\circ (\psi_{hol}^{-1}\circ \psi).$ Let $\phi_i$ be the $i$th component of  $\psi_{hol}^{-1}\circ \psi$. Since $\psi_{hol}^{-1}\circ \psi=Id+O(|z|^2)$, we have $\phi_i(z_1,z_2,z_3)=z_i+O(|z|^2)$.  Moreover, since $\phi_i(z_1,z_2,z_3)-z_i$ vanishes identically on the plane $z_i=0$ we actually have $\phi_i=z_i+z_ia_i+\overline{z}_ib_i$ for $a_i,b_i:U\to\C$ are smooth functions which are $O(|z|)$. Rearranging $f=\phi_1\phi_2\phi_3$, we get the desired form.

    Let $\rho:\R_+\to[0,1]$ be a bump function identically $0$ near the origin and identically $1$ near $1$. For $r\in(0,1)$ let $\rho_r(z_1,z_2,z_3)=\rho(|z|/r).$  We claim that for $r$ small enough, the function $\pi_r=\pi_0+\rho_rg$ has symplectic fibres. For this recall that according to \cite{Donaldson}, the criterion for having symplectic fibres is $|\overline{\partial}\pi_r|<|\partial \pi_r|$.

    Let $C$ be an estimate for $\max_{I\subset \{1,2,3\}}\{\partial h_I,\overline{\partial}h_I\}$ on some precompact neighborhood of the origin in $U$ and for $\rho'$ on the unit interval. Then for sufficiently small $r$ we get 
    $$|\partial\pi_r|\geq |\partial \pi_0|-\sum_I\left(|\rho h_I\partial \pi_I|+C|\rho\pi_I|\right)- \left|\sum h_I\pi_I\right|C/r$$ and, $$|\overline{\partial}\pi_r|\leq \sum_I\left(|\rho h_I\overline{\partial} \pi_I|+C|\rho\pi_I|\right)+\left|\sum h_I\pi_I\right|C/r
    .$$
    All the terms can be made arbitrarily small relative to  $|\partial \pi_0|$ by considering small enough neighborhood of the origin. Indeed, we have $\frac{|\partial\pi_I|}{|\partial \pi_0|}$ $\frac{|\overline{\partial}\pi_I|}{|\partial \pi_0|}$ are $O(1)$,  $h_I$ and $\frac{|\pi_I|}{|\partial \pi_0|}$  are $O(|z|)$ and $\frac{|h_I\pi_I|}{|\partial \pi_0|}$ are $O(|z|^2)$. The estimates of Lemma    \ref{lmTrasprtContinuity2} are also immediate. 
\end{proof}

\subsection{Conclusion of the preparatory step}
We summarize the construction so far in the following lemma.
\begin{prop}\label{propSummaryOfPrepStep}
    There are modifications of the almost toric fibrations $\mu_i$ on the central fiber and of the function $\pi$ so that 
    \begin{enumerate}
      
    \item the $\mu_i$ glue to a continuous function $\mu_0:\cX_0\to \Delta_{\cX}=B$,
    \item Writing $\hat{\cX}_0$ and $\hat{B}$ for the quotient spaces constructed in Lemmas~\ref{lmTrasprtContinuity1} and~\ref{lmTrasprtContinuity2}, and $\hat{\mu}_0:\hat{\cX}_0\to\hat{B}$ and $P_{\pi,\gamma}:\cX_{\gamma(-1)}\to\hat{\cX}_0$ for the induced almost toric fibration and limiting parallel transport map, the composite
    \[
        \hat{\mu}_1:=\hat{\mu}_0\circ P_{\pi,\gamma}:\cX_{\gamma(-1)}\to \hat{B}
    \]
    is continuous,
    \item $\hat{\mu}_1$ is an almost toric fibration when restricted to $\hat{\mu}_1^{-1}(\hat{B}\setminus\hat{A})$, where $\hat{A}=\{\hat{q}_{ij}\}$ is the finite set of collapsed points in $\hat{B}$ corresponding to the cylinders $\{C_{ij}\}$,
    \item the Arnold-Liouville integral affine structure on $B\setminus A=\hat{B}\setminus\hat{A}$ coincides with the restriction of the one from Theorem \ref{tmNAffine}. 
    \end{enumerate}
\end{prop}

\begin{proof}
    (1) is Proposition~\ref{lmInterpolation1}. We then modify $\pi$ in accordance with Lemma~\ref{Lemma:Interpolationoffibration}. The modified map satisfies all the properties of Lemma \ref{lmTrasprtContinuity2} by construction; by the corollary to Lemma~\ref{lmTrasprtContinuity2}, $\hat{\mu}_1 = \hat{\mu}_0\circ P_\gamma$ is therefore continuous, proving (2). 
    
    For (3), that $\hat{\mu}_1$ is an almost toric fibration over the interior of the top stratum follows from Proposition~\ref{lmInterpolation1}. Near each vertex $B_{ijk}$, Lemma~\ref{Lemma:Interpolationoffibration} provides an equivariant symplectomorphism $\psi_{ijk}$ of a neighborhood of $X_{ijk}$ with a neighborhood of the origin in $\C^3$ intertwining $\tilde{\pi}$ with $\pi_0(z_1,z_2,z_3)=z_1z_2z_3$. The moment map $\mu_0$ of the standard torus action on $\C^3$ preserving the holomorphic volume form is invariant under the flow of $\pi_0$, hence by Lemma~\ref{Lemma:Ruansidea} is preserved by parallel transport. Thus $\hat{\mu}_1=\mu_0\circ\psi_{ijk}^{-1}$ near the preimage of $B_{ijk}$, giving an almost toric fibration over a neighborhood of $B_{ijk}$ whose Arnold-Liouville coordinates coincide with the canonical affine structure $\phi_{ijk}$ from Definition~\ref{dfCanIntAff}. 
    
    As the modifications in Proposition~\ref{lmInterpolation1} preserve the integral affine structure near the vertices, the Arnold-Liouville structure on $\Delta_{\cX}\setminus A$ coincides with the restriction of the one from Theorem \ref{tmNAffine}, proving (4).
\end{proof}
The following Corollary is useful in Section~\ref{Sec:HardStep}.

\begin{cor}\label{Cor:NoMonodromy}
 Pick a curve $\gamma$ in $B\setminus A$  so that the region bounded by $\gamma$ in $B$ contains no nodal points. Then there is a neighbourhood $U$ of $\gamma$ such that $\mu^{-1}_1(U)$ is symplectomorphic to $U \times T^2$.
\end{cor}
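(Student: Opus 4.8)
The plan is to recognise $\mu_1$, over a suitable annular neighbourhood of $\gamma$, as a regular proper Lagrangian torus fibration whose induced integral affine structure has trivial monodromy, and then to quote the classification of such fibrations over a base with the homotopy type of a circle, for which all the pertinent obstruction groups vanish.

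First I would choose the neighbourhood. Since $A$ is closed and disjoint from $\gamma$, and since the integral affine structure of Theorem~\ref{tmNAffine} has only finitely many nodal points, none of which lies on $\gamma$, one can pick an open neighbourhood $U$ of $\gamma$ which is an annulus deformation retracting onto $\gamma$, has compact closure inside $\Delta_{\cX}\setminus A$, and contains no nodal point. By Proposition~\ref{propSummaryOfPrepStep}, $\mu_1$ restricts over $\mu_1^{-1}(\Delta_{\cX}\setminus A)$ to an almost toric fibration whose affine structure is the one of Theorem~\ref{tmNAffine}; as $U$ meets no nodal point, $\mu_1|_{\mu_1^{-1}(U)}$ is a genuine regular Lagrangian $\T^2$-fibration, and it is proper because $\cX_{\gamma(-1)}$ is closed. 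Next I would observe that the affine monodromy over $U$ is trivial: $\pi_1(U)$ is generated by $\gamma$, the region $\gamma$ bounds in $B$ contains no nodal point, and by Theorem~\ref{tmNAffine} (see Remark~\ref{rmkNoMonodromy}) the monodromy around such a loop is the identity. Hence the period lattice $\Lambda\subset T^*U$ is a trivial local system, and since an annulus is parallelisable the reference fibration $T^*U/\Lambda$, with its canonical symplectic form, is isomorphic as a Lagrangian fibration to $U\times\T^2$ with the product form determined by the affine coordinates on $U$.

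It then remains to identify $\mu_1^{-1}(U)$ symplectically with this reference model, and here I would invoke Duistermaat's classification of Lagrangian torus fibrations. The topological type of the fibration is governed by a Chern class in $H^2(U;\Lambda)$, and once this class vanishes the symplectic form differs from the canonical one by the pullback of a closed $2$-form on $U$, whose de Rham class lives in $H^2(U;\R)$. Both groups vanish because $U$ has the homotopy type of $S^1$, so $\mu_1^{-1}(U)\cong T^*U/\Lambda\cong U\times\T^2$ by a symplectomorphism intertwining the fibrations, as desired. The main obstacle — and essentially the only non-formal input — is the vanishing of the affine monodromy over $U$, which is exactly the content that Theorem~\ref{tmNAffine} was designed to supply; the remaining steps are the soft classification theory together with the cohomological triviality of the annulus.
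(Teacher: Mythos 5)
Your proposal is correct and follows essentially the same route as the paper: the paper's (one-line) proof simply cites the triviality of the affine monodromy around $\gamma$ from Remark~\ref{rmkNoMonodromy}, leaving the Duistermaat-type action-angle classification implicit, which is exactly the machinery you spell out. Your added detail (vanishing of the Chern class and of $H^2(U;\R)$ over an annular neighbourhood) is a faithful elaboration of what the paper takes for granted, so there is nothing to correct.
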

\begin{proof}
     By Remark~\ref{rmkNoMonodromy}, the monodromy of the integral affine structure around the loop $\gamma$ is the identity.  
\end{proof}

\section{The hard step}\label{Sec:HardStep}

In this section, we complete the proof of Theorem~\ref{Thm:LTFongoodKulikov} by extending the almost toric fibration over the remaining parts of the base. The strategy consists of three main steps:

\begin{enumerate}
    \item First, we show that near each point of $\hat{A}$, we can find a neighborhood $\hat{R}\subset\hat{B}$ whose preimage $R\subset B$ under the quotient map is a rectangular neighborhood (in the sense of the integral affine structure on $B$) and such that the preimage $\hat{\mu}_1^{-1}(\hat{R})$ is diffeomorphic to $R \times \T^2$ (Lemma~\ref{lmDiffeo}). This relies on Wendl's classification of minimal strong fillings of the standard contact structure on $\T^3$.
    
    \item Second, we show the pre-image $\hat{\mu}_1^{-1}(\hat{R})$ is symplectomorphic to $R \times \T^2$. This is done by compactifying to $S^2\times S^2$ and using classification results for symplectic structures and log Calabi-Yau divisors. 
    
    \item Finally, we need to interpolate the torus fibration $\hat{\mu}_1$ defined over a neighborhood of $\partial \hat{R}=\partial R$ with the one defined over $R$ coming from a symplectomorphism $\hat{\mu}^{-1}_1(\hat{R})\simeq R\times \T^2$.
\end{enumerate}

\subsection{Proof of Theorem~\ref{Thm:LTFongoodKulikov}}

Let $q_B:B\to\hat{B}$ denote the quotient collapsing each interval $I_{ij}$ to the point $\hat{q}_{ij}$. Note that the pre-image under $q_B$ of each point $\hat{c}$ of $\hat{A}$ lies in the interior of an edge of the $1$-skeleton of $B$. Given a neighborhood $\hat{R}$ of $\hat{c}$ in $\hat{B}$, we denote by
$$
R:=q_B^{-1}(\hat{R})\subset B.
$$ We call $R$ a \emph{rectangular neighborhood} of $q_B^{-1}(\hat{c})$ in $B$ if $R$ is integral affine isomorphic to a product of intervals in the plane. In this case, we will also say that $\hat{R}$ is a neighborhood of $\hat{c}$ with rectangular preimage $R$.

\begin{lemma}\label{lmDiffeo}
    Let $\hat{R}\subset\hat{B}$ be a closed neighborhood of a point $\hat{c}\in\hat{A}$ 
    with rectangular pre-image $R\subset B$. If $\hat{R}$ is contained in a sufficiently small neighborhood of $\hat{c}$ then the preimage of $\hat{\mu}_1^{-1}(\hat{R})$  is diffeomorphic to $R\times \T^2$. Moreover, the diffeomorphism can be taken to intertwine $\hat{\mu}_1$ with $q_B\circ \mathrm{pr}_R$ close enough to the boundary, where $\mathrm{pr}_R$ is the projection of $R \times \T^2$ onto the first factor.
\end{lemma}
The proof of Lemma \ref{lmDiffeo} relies on \cite[Corollary 4]{Wendl} which we recall after introducing the necessary terminology. A symplectic 4-manifold $(W,\om)$ is a \emph{strong filling} of a contact 3-manifold $(M,\xi)$ if $\partial W = M$ and there exists a vector field $Y$ defined near $\partial W$ which is transverse to the boundary and satisfies $L_Y \om = \om$ and $\xi = \ker \iota_Y \om$. We further say that $(W,\om)$ is a \emph{minimal strong filling} of $(M,\xi)$ if $W$ contains no symplectic 2-spheres with self-intersection -1. 

\begin{thm}(Corollary 4 in \cite{Wendl})\label{Thm:Wendl}
Every minimal strong filling of $(\T^3,\xi_{\std})$ is diffeomorphic to $\T^2 \times D$ where $D$ is the 2 dimensional disk and where $\xi_{\std}$  is the standard contact structure on $\T^3$ induced by the form $\alpha:= \cos({2\pi z})dx + \sin({2 \pi z}) dy $.
\end{thm}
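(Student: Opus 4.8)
The plan is to follow the pseudoholomorphic foliation strategy of \cite{Wendl}. We first recall that $(\T^3,\xi_{\std})$ is contactomorphic to the unit cotangent bundle $ST^*\T^2$ with its canonical contact structure: writing a unit covector as $(\cos 2\pi z,\sin 2\pi z)$ in the flat trivialization identifies $\lambda_{\std}=\cos(2\pi z)\,dx+\sin(2\pi z)\,dy$ with the restriction of the tautological $1$-form. Its \emph{model} strong filling is the unit disk cotangent bundle $DT^*\T^2$, which is diffeomorphic to $\T^2\times D$ since $\T^2$ is parallelizable. The closed Reeb orbits are the closed geodesics of the flat metric, organized into Morse--Bott tori indexed by the rational slopes; fix the Morse--Bott family $\mathcal F$ of orbits in the direction $\partial_x$, covering the two tori $\{z=0\}$ and $\{z=1/2\}$.

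Now let $(W,\om)$ be an arbitrary minimal strong filling. After a symplectic isotopy we arrange $\om=d(e^s\lambda_{\std})$ on a collar of $\partial W$ and form the completion $\hat W=W\cup_{\T^3}\bigl([0,\infty)\times\T^3\bigr)$; choose a generic $\om$-compatible almost complex structure $J$ on $\hat W$ which is cylindrical on the end and, after a small Morse--Bott perturbation, regular along $\mathcal F$. The first step is to study the moduli space $\mathcal M$ of finite-energy $J$-holomorphic cylinders with two positive punctures, each asymptotic to a simple orbit of $\mathcal F$ --- in the model filling these are visible as the holomorphic annuli lying over the closed geodesics, with boundary the two unit covectors along each geodesic. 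An index computation gives $\dim\mathcal M=2$, as required for a foliation, and Wendl's automatic transversality criterion for punctured curves applies (genus $0$, two punctures, nonnegative normal Chern number), so $\mathcal M$ is a smooth surface; Siefring's relative intersection theory then shows the curves in $\mathcal M$ are embedded and pairwise disjoint.

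The second step is to show $\mathcal M$ is nonempty and that its members foliate $\hat W$. Nonemptiness is seen near the positive end, where $\hat W$ is symplectomorphic to the corresponding end of the model filling (a curve can also be imported by a neck-stretching comparison with the model). For the foliation property, the set of points of $\hat W$ lying on some curve of $\mathcal M$ is open by transversality; closedness follows from SFT compactness together with the exclusion of every nontrivial degeneration. Sphere bubbles are excluded by minimality of $W$ (no symplectic $(-1)$-sphere) and positivity of intersections; multi-level breakings and multiply covered limits are excluded using the Siefring intersection numbers and the action/asymptotic constraints forced by the punctures lying in $\mathcal F$. The same inputs rule out nodal or Lefschetz-type singular fibers. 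Hence $\hat W$, and therefore $W$, is the total space of a fibration over the closed surface $\mathcal M$ with disk fibers.

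Finally, the boundary-at-infinity circle fibers are exactly the orbits of $\mathcal F$, so $\mathcal M\cong\T^2$ and the fiberwise boundary is $\T^3$; a disk bundle over $\T^2$ whose boundary circle bundle is $\T^3$ has Euler number $0$, hence is trivial. Therefore $W$ is diffeomorphic to $\T^2\times D$, which reproves Eliashberg's uniqueness theorem for fillings of $(\T^3,\xi_{\std})$. I expect the main obstacle to be the closedness step: running SFT compactness and using Siefring's intersection theory together with minimality to show that the two-dimensional family of cylinders degenerates nowhere and genuinely foliates $\hat W$.
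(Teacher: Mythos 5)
Within the paper this statement is not proved at all: it is quoted verbatim as Corollary~4 of \cite{Wendl} and used as a black box (together with \S5.1 and Proposition~5.6 of the same paper) in the proof of Lemma~\ref{lmDiffeo}. So there is no internal argument to compare against; what you have written is an attempt to re-derive Wendl's theorem, and in outline it does follow his holomorphic-foliation strategy (Morse--Bott cylinders over the flat geodesics, automatic transversality, Siefring intersection theory, SFT compactness plus minimality). Judged as a proof, however, it has a genuine gap, and not only the one you flag.

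The problem is the endgame. Your foliating curves are cylinders with \emph{two} punctures, so their compactifications are annuli, not disks; you cannot conclude that $\hat W$ ``is the total space of a fibration over the closed surface $\mathcal M$ with disk fibers.'' In the model $T^*\T^2$ with the standard cylindrical $J$ the leaves are the cylinders $\{y+ip_y=\mathrm{const}\}$, and the leaf space is the \emph{open} cylinder $S^1\times\R$: it is noncompact because leaves slide off into the cylindrical end, and the space of asymptotic orbits of $\mathcal F$ is $S^1\sqcup S^1$ (one circle of orbits in each Morse--Bott torus), not $\T^2$. Hence the identification $\mathcal M\cong\T^2$ and the ``disk bundle over $\T^2$ with Euler number $0$'' argument have nothing to apply to. Disk (plane) leaves would require the asymptotic orbits to be contractible in the filling, which is false in the model and is part of what must be \emph{proved} for an arbitrary filling, not assumed. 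The correct conclusion from the foliation is a fibration of $\hat W$ by open cylinders over a noncompact surface agreeing with the model near infinity, after which one must still argue (as Wendl does, via the structure of the ends of the moduli space and comparison with the model filling) that the base is a standard open annulus and the fibration is trivial, yielding deformation equivalence to a star-shaped domain in $T^*\T^2$ and hence $W\cong\T^2\times D$. In addition, the analytic core of the theorem --- SFT compactness, exclusion of multiply covered and broken limits, and closedness of the foliated region, with minimality used to rule out nodal degenerations --- is deferred rather than carried out, as you yourself note; that is precisely where the content of Wendl's proof lies.
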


\begin{lemma}\label{lmRMinimalFilling} 
    $\hat{R}$ as in Lemma \ref{lmDiffeo} can be thickened to a neighborhood $\hat{R}'$ so that 
    $\hat{\mu}_1^{-1}(\hat{R}')$ is a minimal strong filling of  $(\T^3,\xi_{\std})$. 
\end{lemma}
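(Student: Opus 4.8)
\textbf{Proof plan for Lemma \ref{lmRMinimalFilling}.}
The strategy is to first realize $\mu_1^{-1}(R')$ as a \emph{strong} filling of a contact $\T^3$, and then to upgrade ``strong filling'' to ``minimal strong filling'' by ruling out embedded symplectic $(-1)$-spheres. For the first part, I would enlarge $R$ slightly to a rectangular neighborhood $R'$ with $\partial R' \subset B\setminus A$; by Corollary \ref{Cor:NoMonodromy}, over a collar neighborhood of $\partial R'$ the fibration $\mu_1$ is honestly an almost toric fibration with no nodes, so $\mu_1^{-1}(\partial R')$ carries a genuine $\T^2$-fibration over $\partial R' \simeq S^1$. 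The key point is that the symplectic form near $\mu_1^{-1}(\partial R')$, together with this Lagrangian torus fibration, puts the boundary in Arnold--Liouville normal form: a neighborhood of $\mu_1^{-1}(\partial R')$ is symplectomorphic to a neighborhood of $T^* \T^2$ restricted to an annulus in the base, i.e.\ to $(\partial R'\times [-\epsilon,\epsilon])\times \T^2$ with the standard form. On this model the radially outward vector field $Y$ in the base direction (rescaled appropriately) is a Liouville vector field transverse to the boundary hypersurface $\mu_1^{-1}(\partial R')$, and the induced contact structure on $\mu_1^{-1}(\partial R')\cong \T^3$ is exactly the one given by the $1$-form $\iota_Y\om$; one checks this agrees, after an affine change of coordinates on $\T^3$, with $\cos(2\pi z)\,dx + \sin(2\pi z)\,dy$, since the characteristic foliation of a Lagrangian torus boundary of a symplectic filling is always the standard one. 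This realizes $\mu_1^{-1}(R')$ as a strong symplectic filling of $(\T^3,\xi_{\std})$.

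For minimality, I must show $\mu_1^{-1}(R')$ contains no embedded symplectic sphere of self-intersection $-1$. Here I would use that $R'$ contains exactly one component $c$ of $A$ sitting in the interior of one edge $e_{ij}$, and no nodal points of the integral affine structure. Over the complement of $c$, $\mu_1^{-1}(R'\setminus c)$ is a trivial $\T^2$-bundle by Corollary \ref{Cor:NoMonodromy}, which contains no closed surfaces at all other than tori, hence no symplectic spheres. So any hypothetical symplectic $(-1)$-sphere $S$ must intersect $\mu_1^{-1}(c)$, whose image is a single interval; the fibration $\mu_1$ restricted to $S$ would then be a map to a space homotopy equivalent to an interval, forcing $S$ to lie in a neighborhood of a single fiber of $\mu_1$ over a point near the middle of $e_{ij}$. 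But $\mu_1$ near such a point still surjects onto a $2$-dimensional region, so a closed surface mapping into a small neighborhood of one regular fiber must be contained in a small symplectic tube $D^2\times \T^2$ or $D^4$; in either case $H_2$ is generated by a torus class or is trivial, and in particular carries no class of self-intersection $-1$ represented by a sphere. (Alternatively: any such $S$ would have to be disjoint from the boundary, so it is contained in the interior; since $\mu_1$ restricted to the interior minus a slice is a torus bundle, a Mayer--Vietoris / compactness argument shows $[S]$ maps to zero in $H_2$ of each piece, so $[S]\cdot[S]=0$, contradicting $-1$.) Thus $\mu_1^{-1}(R')$ is minimal.

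The main obstacle I anticipate is the first part: verifying carefully that the contact structure induced on the boundary $\mu_1^{-1}(\partial R')\cong \T^3$ by the Liouville field is contactomorphic to $\xi_{\std}$, rather than some other tight (or even overtwisted) contact structure on $\T^3$. This requires being precise about the Arnold--Liouville normal form near a Lagrangian torus boundary and about how the period lattice varies along $\partial R' \simeq S^1$; the rescaling of the base coordinate to produce an honest Liouville vector field (not merely a symplectic dilation on a collar) is where the $\cos/\sin$ twisting enters, and matching it to Wendl's normalization is the delicate bookkeeping step. Everything downstream — applying Theorem \ref{Thm:Wendl} to conclude $\mu_1^{-1}(R')\cong \T^2\times D$ in Lemma \ref{lmDiffeo} — is then formal, with the ``intertwines $\mu_1$ with projection near $\partial R$'' clause following from the explicit product structure on the collar.
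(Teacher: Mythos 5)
Your first half (the strong filling) is essentially the paper's argument: thicken $R$ so that, by Corollary \ref{Cor:NoMonodromy}, a collar of the boundary is a trivial Lagrangian $\T^2$-fibration in Arnold--Liouville normal form inside $T^*\T^2$, and the fiberwise radial Liouville field then exhibits $\mu_1^{-1}(\partial R')$ as a contact-type hypersurface contactomorphic to $(\T^3,\xi_{\std})$. One cosmetic point: take $R'$ convex with \emph{smooth} boundary (as the paper does) rather than rectangular, since otherwise $\mu_1^{-1}(\partial R')$ has corners and is not yet a contact hypersurface; smoothing is routine and your Giroux-torsion bookkeeping concern is handled exactly by the fact that the boundary of a star-shaped domain in the $\R^2$-factor traverses the circle of Lagrangian directions once.

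The minimality half has a genuine gap: both of your arguments tacitly assume control over $\mu_1^{-1}(c)$, which is precisely the region about which nothing is known at this stage (determining its structure is the whole point of the hard step). Exactness of the symplectic form on the trivial torus bundle $\mu_1^{-1}(R'\setminus c)$ does show a symplectic $(-1)$-sphere $S$ would have to meet $\mu_1^{-1}(c)$ (your stated reason, that this region ``contains no closed surfaces other than tori,'' is false as written, but the conclusion is fine). However, it does not follow that $\mu_1(S)$ is concentrated near a single point of $c$; the image is merely some compact connected subset of $R'$. Nor can a Mayer--Vietoris argument compute $[S]\cdot[S]$: in any decomposition of $\mu_1^{-1}(R')$ the piece containing $\mu_1^{-1}(c)$ is a black box, and a priori $S$ could be entirely supported in an arbitrarily small neighborhood of $\mu_1^{-1}(c)$ with normal Euler number $-1$ there. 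So the sketch never rules out the relevant spheres.

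The paper closes this from the outside rather than from the fibration structure: it reduces minimality to showing $\mu_1^{-1}(R')$ carries no homologically essential spheres, and obtains this by observing that $\mu_1^{-1}(R')$ is contained in a normal neighborhood, inside the total space of the degeneration, of the cylinder obtained from the double curve $X_{ij}\cong S^2$ by deleting two small discs around the triple points; such a neighborhood is homotopy equivalent to $S^1$, hence aspherical. If you prefer an argument intrinsic to the fiber, note that the smooth fiber is a K3 with $\omega$ taming the complex structure, so $c_1=0$, and adjunction for embedded symplectic spheres gives $[S]\cdot[S]=-2$ for every such sphere; hence no open subset of the fiber, in particular $\mu_1^{-1}(R')$, contains a symplectic $(-1)$-sphere. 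Either route replaces the missing step; what cannot work is any argument that needs to know what $\mu_1$ looks like over $c$.
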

\begin{proof}
    Fix a tubular neighborhood $N$ of the cylinder $C=\hat{\mu}_0^{-1}(\hat{c})$. Let $U\subset \overline{U}\subset N\cap\cX_0$ and assume $R$ is such that $\pi^{-1}(R)\subset U$. Thicken to $\hat{R}'$ so that $R':=q^{-1}_B(\hat{R}')$ is convex with smooth boundary and so that $\hat{\mu}_0^{-1}(\hat{R}')\subset U$. Then  Corollary \ref{Cor:NoMonodromy} immediately implies that $\hat{\mu}_1^{-1}(\hat{R}')$ is strong symplectic filling of $(\T^3,\xi_{\std})$. For minimality it suffices to prove that $\hat{\mu}_1^{-1}(\hat{R}')$ is topologically aspherical. For this note that by Lemma \ref{lmTubularHatPoint} the image of $\hat{\mu}_1^{-1}(\hat{R})$ under parallel transport to $\cX_{\gamma(t)}$ for $t$ close enough to $0$ is contained in the normal neighborhood $N$ of $C$ which a cylinder (the sphere with two little discs removed) in the total space of the fibration. But this parallel transport is a diffeomorphism as long $t$ does not reach $0$. 
\end{proof}

For our application, we need a slight strengthening of Theorem \ref{Thm:Wendl} which is also contained in \cite{Wendl}. For a parameter $\sigma$ \cite[\S5.1]{Wendl} introduces a model filling $W_{\sigma}$ of $T^*\T^2\setminus [-1,1]^2\times \T^2$. In particular, for the standard contact form on the unit cotangent circle bundle $W_{\sigma}$ contains the end  $([2,\infty)\times T^3, de^\rho\alpha)$ for $\alpha$ the standard form and $\rho$ the radial coordinate. On the other hand, let $W$ be a minimal strong filling of $(\T^3,\lambda)$ for $\lambda=\iota_Y\omega$ inducing the standard contact structure. Let $W_C$ be the result of attaching to $W$ a trivial symplectic cobordism with lower boundary $(\T^3,\lambda)$ and upper boundary $e^C\alpha$. Let $W_{\infty}$ be obtained from  $W_C$ by attaching the symplectization end $([C,\infty)\times T^3, de^a\alpha)$. Note the model filling $W_{\sigma}$ naturally contains $([C,\infty)\times T^3, de^a\alpha)$ for $R$ large enough. For appropriate choice of $\sigma$ \cite[Proposition 5.6]{Wendl} declares that for $C'>C$ large enough we have a diffeomorphism $W_{\sigma}\to W_{\infty}$ which is identity on 
$[R_1,\infty)\times T^3$. On the other hand, by the construction of $W_{\sigma}$ in \cite[\S 5.1]{Wendl} there is a diffeomorphism $T^*\T^2\to W_{\sigma}$ which is the identity on $[C,\infty)\times T^3$. We summarize this discussion with the following corollary.
\begin{cor}\label{cor:WendlEnd}
For a strong minimial filling $W$ of $(\T^3,\lambda)$ and $W_{\infty}$ as in the previous paragraph, there is a diffeomorphism $T^*\T^2\to W_{\infty}$ which is the identity outside a compact set.
\end{cor}

\begin{proof}[Proof of Lemma \ref{lmDiffeo}]
      Letting $W=\hat{\mu}_1^{-1}(\hat{R}')$ as in Lemma \ref{lmRMinimalFilling} and attaching a cone to obtain $W_{\infty}$, Corollary \ref{cor:WendlEnd} gives a diffeomorphism $T^*\T^2\to W_{\infty}$ which intertwines $\hat{\mu}_1$ with the projection to the cotangent fiber outside of a compact set. The claim about $\hat{\mu}_1^{-1}(\hat{R})$  follows by inflating a neighborhood of $\hat{\mu}_1^{-1}(\partial \hat{R})=\mu_1^{-1}(\partial R)$ in a fiber preserving manner.
\end{proof}

Theorem~\ref{Thm:LTFongoodKulikov} will now be an immediate consequence of the following proposition.
\begin{prop}\label{prpHard}
    Let $\om_0$ denote the standard symplectic form on $T^*\T^2=\R^2\times \T^2$ and let  $\mu_{\std}:T^*\T^2\to\R^2$ denote the projection to the first factor. Let $R\subset\R^2$ be a rectangle. Let $\om$ be a symplectic form on  $R\times \T^2 \subset T^*\T^2$ such that on an open neighborhood $U$ of the boundary $\partial \mu_{\std}^{-1}(R)$ we have $\om|_U=\om_0|_U$. Then there exists a diffeomorphism $\Psi: R \times \T^2 \rightarrow R \times \T^2$ and an open neighborhood $U'\subset U$ of $\partial \mu_{\std}^{-1}(R)$ such that $\Psi^*\om_0 =\om$ and $\mu_{\std}\circ \Psi=\mu_{\std}$  on $U'$.
\end{prop}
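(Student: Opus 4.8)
The plan is to reduce the proposition to the rigidity of symplectic forms on $S^2\times S^2$ together with the classification of symplectic log Calabi--Yau divisors in rational surfaces, by compactifying $(R\times\T^2,\om)$. After an integral--affine change of coordinates I may assume $R=[0,\ell_1]\times[0,\ell_2]$, so that $R$ is the moment polytope of $(S^2\times S^2,\om_{\mathrm{tor}})$ for the product form with factor areas $\ell_1,\ell_2$, and $\mu_0^{-1}(\operatorname{int}R)=\operatorname{int}(R)\times\T^2$ is exactly the complement of the toric boundary divisor $D^{\mathrm{tor}}=D^{\mathrm{tor}}_1\cup\cdots\cup D^{\mathrm{tor}}_4$. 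Since $\om$ agrees with $\om_0$ on a neighbourhood $U$ of $\partial\mu_0^{-1}(R)=\partial R\times\T^2$, and $\om_0$ near $\partial R\times\T^2$ is precisely the restriction of $\om_{\mathrm{tor}}$ near $D^{\mathrm{tor}}$, I would glue the standard toric corner model --- a symplectic disc bundle over a neighbourhood of each edge of $\partial R$ and a Darboux corner chart at each vertex --- onto $(R\times\T^2,\om)$ along $U$. This produces a closed symplectic $4$--manifold $(M,\Om)$ carrying a cycle $D=D_1\cup\cdots\cup D_4$ of embedded symplectic spheres, all of self--intersection zero, with $\sum_i D_i$ anticanonical, such that $\Om=\om_0$ near $D$ and $(M\setminus D,\Om)$ contains $(R\times\T^2,\om)$ as the complement of a neighbourhood of $D$. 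Collapsing the $\T^2$--fibres over $\partial R$ as in the toric picture identifies $M$ diffeomorphically with $S^2\times S^2$; and since the classes of two of the $D_i$, one from each ruling, form a basis of $H_2(M;\R)$ and these $D_i$ lie where $\Om=\om_{\mathrm{tor}}$, their $\Om$-- and $\om_{\mathrm{tor}}$--areas coincide, so $[\Om]=[\om_{\mathrm{tor}}]$ in $H^2(M;\R)$.

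Next I would invoke rigidity. By the classification of symplectic forms on $S^2\times S^2$ (McDuff; Lalonde--McDuff), cohomologous symplectic forms are diffeomorphic, so $(M,\Om)\cong(S^2\times S^2,\om_{\mathrm{tor}})$. The divisor $D$ is a symplectic log Calabi--Yau divisor whose homology configuration and component areas are those read off from the rectangle $R$, hence the same as those of $D^{\mathrm{tor}}$. By the classification of symplectic log Calabi--Yau divisors in rational surfaces --- the space of such divisors in a fixed homology configuration inside $(S^2\times S^2,\om_{\mathrm{tor}})$ being connected, so that any two are carried to one another by an ambient symplectomorphism --- the diffeomorphism above can be improved to a symplectomorphism $\Phi\colon(M,\Om)\to(S^2\times S^2,\om_{\mathrm{tor}})$ with $\Phi(D)=D^{\mathrm{tor}}$ componentwise, up to relabelling the $4$--cycle.

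It then remains to standardize $\Phi$ near the divisor and restrict. The symplectic neighbourhood theorem for the cycle $D^{\mathrm{tor}}$ has the toric corner model as its standard model, and that model carries the Hamiltonian $\T^2$--action whose moment map is $\mu_0$; thus $\Phi$ near $D$ differs from the tautological identification of corner models by a symplectomorphism of a neighbourhood of $D^{\mathrm{tor}}$ fixing $D^{\mathrm{tor}}$, and such a gauge can be isotoped, through ambient symplectomorphisms, to a $\T^2$--equivariant one. Composing $\Phi$ with the resulting global symplectomorphism I may assume $\Phi$ is $\mu_0$--fibred on a neighbourhood of $\partial R\times\T^2$, covering an integral--affine automorphism $\nu$ of the polytope $R$; since $\nu$ permutes the edges of the rectangle it is one of its finitely many lattice symmetries, each realized by a toric automorphism of $(S^2\times S^2,\om_{\mathrm{tor}})$. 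Post--composing $\Phi$ with the inverse of that automorphism I arrange $\mu_0\circ\Phi=\mu_0$ on a neighbourhood of $\partial R\times\T^2$. Finally, matching collars by a fibrewise inflation near $\partial R\times\T^2$ exactly as in the proof of Lemma~\ref{lmDiffeo}, the restriction $\Psi:=\Phi|_{R\times\T^2}$ is a diffeomorphism of $R\times\T^2$ with $\Psi^*\om_0=\Om|_{R\times\T^2}=\om$ and $\mu_0\circ\Psi=\mu_0$ on a neighbourhood $U'$ of $\partial\mu_0^{-1}(R)$. Note that $\Psi$ need not act trivially on the $\T^2$--fibres near $\partial R$, only preserve $\mu_0$ there, which is precisely why the relabelling of the cycle, the lattice symmetry $\nu$, and any fibre monodromy of the divisor gauge are all harmless.

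The main obstacle is the rigidity input of the second paragraph --- the classification of symplectic log Calabi--Yau divisors --- and, entangled with it, the gauge normalization of $\Phi$ near $D$ in the third: one must check that the homological and area data of the capped--off divisor genuinely match those of the toric boundary (which is why I normalized the polytope to be $R$ itself) and that the neighbourhood gauge furnished by the divisor neighbourhood theorem can be taken ambient. The compactification, the cohomology computation, and the toric bookkeeping are all soft and present no real difficulty.
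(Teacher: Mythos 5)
Your proposal follows essentially the same route as the paper's proof: compactify to $S^2\times S^2$ (the paper does this by symplectic cut rather than by gluing toric corner models), apply the McDuff--Lalonde classification of symplectic forms on $S^2\times S^2$, then use the classification of $\omega$-orthogonal symplectic log Calabi--Yau divisors (Theorem~\ref{Theorem:Isotopyofdivisors}) to carry the capped-off boundary cycle to the toric one, and finally normalize the resulting symplectomorphism near the divisor so that it intertwines $\mu_0$. The one step you only sketch --- isotoping the divisor-neighbourhood gauge through ambient symplectomorphisms to a fibration-preserving one --- is precisely what the paper isolates as Proposition~\ref{prpATFinterpolation} and proves in a separate section, so your outline, including the obstacle you flag, matches the paper's argument.
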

\begin{proof}[Proof of Theorem~\ref{Thm:LTFongoodKulikov} assuming Proposition \ref{prpHard}]
By the last item of Proposition \ref{propSummaryOfPrepStep} there is no monodromy around the components of $A$. See Remark \ref{rmkNoMonodromy}. Thus each point $\hat{c}$ of $\hat{A}$ admits a neighborhood $\hat{R}_{\hat{c}}$ in $\hat{B}$ whose preimage $R_{\hat{c}}\subset B$ is rectangular in the above sense. By Lemma~\ref{lmDiffeo} there is a diffeomorphism $\phi:\hat{\mu}_1^{-1}(\hat{R}_{\hat{c}})\to R_{\hat{c}}\times \mathbb{T}^2$. We can pick $\phi$  which agrees near $\partial \hat{\mu}_1^{-1}(\hat{R}_{\hat{c}})$ with Arnold-Liouville coordinates induced by $\hat{\mu}_1$. In particular, $\phi$ is a symplectomorphism near the boundary. Let $\Psi:R_{\hat{c}}\times \mathbb{T}^2\to R_{\hat{c}}\times \mathbb{T}^2$ be the result of applying Proposition \ref{prpHard} to the symplectic form $(\phi^{-1})^*\omega$ (viewing $R_{\hat{c}}$ as a rectangle in $\R^2$ via an integral affine chart). Then $\Psi\circ\phi:\hat{\mu}_1^{-1}(\hat{R}_{\hat{c}})\to R_{\hat{c}}\times\mathbb{T}^2$ is a symplectomorphism and the composition $\mu':=\mu_{\std}\circ\Psi\circ\phi$ agrees with $\hat{\mu}_1$ on a neighborhood of $\partial \hat{\mu}_1^{-1}(\hat{R}_{\hat{c}})$ (where $\mu_{\std}$ is defined to be the standard projection map onto $R_{\hat{c}}$ in $R_{\hat{c}} \times \T^2$). We thus define 
$$
\mu(b)=\begin{cases}
    \hat{\mu}_1(b),&\quad b\in \hat{B}\setminus\cup_{\hat{c}}\hat{R}_{\hat{c}}=B\setminus \cup_{\hat{c}}R_{\hat{c}}\\
    \mu'(b),&\quad b\in \cup_{\hat{c}}R_{\hat{c}}.
\end{cases}
$$
By construction in Proposition \ref{propSummaryOfPrepStep}, the IAS induced by $\hat{\mu}_1$ agrees with the one from Theorem \ref{tmNAffine}.  Similarly, by construction, the IAS induced by $\mu'$ is the standard one on $R_{\hat{c}}\times\T^2$ with $R_{\hat{c}}$ an affine rectangle in $\Delta_{\cX}$ with the same IAS. Thus $\mu$ is as required.

\end{proof}

\subsection{Proof of Proposition \ref{prpHard}}
For the proof of Proposition \ref{prpHard}, we recall the following theorems.


\begin{thm}(Symplectic Torelli Theorem for rational surfaces, {Theorem 13 in \cite{Li-Min-Ning}})\label{Theorem:Isotopyofdivisors}
 Let $(M_1,\om_1)$ and $(M_2,\om_2)$ be a pair of rational symplectic 4-manifolds. Let $D_1,D_2$ be a pair of symplectic log Calabi-Yau divisors in $M_1$, $M_2$ with components $D_{1,i},D_{2,i}$ respectively. Suppose the components of $D_1$ and $D_2$ are $\omega_1$-orthogonal and $\om_2$-orthogonal respectively. If there is an integral isometry $$
\gamma: H^2(M_1 ; \mathbb{Z}) \rightarrow H^2(M_2 ; \mathbb{Z})
$$ which maps $PD([D_{1,i}])$ to $PD([D_{2,i}])$, and whose real extension $\gamma_R : H^2(M_2;\R)\rightarrow H^2(M_1;\R)$ maps $[\om_1]$ to $[\om_2]$, then $(M_2,\om_2,D_2)$ is isomorphic to $(M_1,\om_1,D_1)$. 
    
\end{thm}

We also need the following proposition whose proof is given in Section \ref{seInterpolation}.

\begin{prop}\label{prpATFinterpolation}
    Let $\mu:X\to B$ be an ATF with boundary cycle $D=\cup D_i$. Let $U\subset X$ be an open neighborhood of $D$, let $V\subset B$ be a neighborhood of $\partial B$, and let $\mu': U\to V$ be an ATF with the same boundary cycle $D$ and inducing the same integral affine structure. Then there exists a symplectomorphism $\Phi:X\to X$ so that $\mu\circ \Phi$ agrees with $\mu'$ on a smaller neighborhood of $D$.
\end{prop}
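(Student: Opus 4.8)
The plan is to build $\Phi$ as a composition of finitely many Hamiltonian diffeomorphisms of $X$, each supported in a small neighborhood of a corner of the cycle $D$ or of a component $D_i$, following the template already used in Lemmas~\ref{Lemma:InterpolatingtorusfibrationonC^2} and~\ref{lmInterpolation1}. Write $D=D_1\cup\cdots\cup D_m$ with corners $p_i:=D_i\cap D_{i+1}$ (cyclic indices). Since $\mu$ and $\mu'$ have the same boundary cycle $D$, for each $i$ the set $\mu(D_i)=\mu'(D_i)$ is a common boundary edge $e_i$ of $B$, $\mu(p_i)=\mu'(p_i)$ a common vertex $q_i$, and moreover $\mu|_{D_i}=\mu'|_{D_i}$: each restriction is the moment map of the meridian $S^1$-action on the sphere $D_i$, which (as a Hamiltonian $S^1$-space with fixed points $p_{i-1},p_i$) is pinned down by $\omega(D_i)$ together with $\mu(p_i)=q_i$. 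So $\mu$ and $\mu'$ already coincide on $D$, and the task is to upgrade this to agreement on a neighborhood.

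First I would match the two fibrations near each corner. Near $p_i$ the local model for an ATF at a vertex provides, for each of $\mu$ and $\mu'$, a symplectic chart $\psi_i,\psi_i'\colon(U_0,\{z_1z_2=0\})\hookrightarrow(X,D_i\cup D_{i+1})$ onto a neighborhood of $p_i$ intertwining the fibration with the standard $\C^2$ moment map up to an integral-affine identification of the model quadrant with a neighborhood of $e_i\cup e_{i+1}$ in $B$; since this identification must send $0$ to $q_i$ and the two model axis directions to the primitive tangent vectors of $e_i,e_{i+1}$, it is the same for $\mu$ and $\mu'$. Hence $g_i:=\psi_i\circ(\psi_i')^{-1}$ is a stratification-preserving symplectomorphism between neighborhoods of $p_i$, fixing $p_i$, with $\mu\circ g_i=\mu'$. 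By the Alexander-isotopy-plus-cutoff argument in the proof of Lemma~\ref{Lemma:InterpolatingtorusfibrationonC^2} (a cutoff of the form $f(|z_1|^2,|z_2|^2)$ whose partials vanish near the axes), $g_i$ is the time-one map of a globally defined Hamiltonian on $X$ supported in an arbitrarily small ball around $p_i$; call this $\Phi^{\mathrm c}_i$. It equals $g_i$ on a smaller neighborhood $W_i$ of $p_i$, so $\mu\circ\Phi^{\mathrm c}_i=\mu'$ there. Taking the balls disjoint and setting $\Phi^{\mathrm c}:=\Phi^{\mathrm c}_1\circ\cdots\circ\Phi^{\mathrm c}_m$, I replace $\mu$ by $\mu\circ\Phi^{\mathrm c}$; now $\mu=\mu'$ on $\bigcup_iW_i$.

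Next I would match along the edges. Let $D_i^{\circ}\subset D_i$ be $D_i$ with two small caps removed, the caps lying well inside $W_{i-1},W_i$. On a neighborhood of $D_i^{\circ}$ both $\mu$ and $\mu'$ are honest toric fibrations collapsing the meridian of $D_i$, with the same moment map along $D_i$, so the edge part of the proof of Lemma~\ref{lmInterpolation1} applies verbatim: produce a Hamiltonian on $D_i$ conjugating the two reduced $S^1$-actions, extend it to the symplectic normal bundle of $D_i$ by pullback, cut it off by a function $\equiv1$ near $D_i^{\circ}$ and supported in a small tube around it, and take the time-one flow $\Phi^{\mathrm e}_i$. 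Then $\mu\circ\Phi^{\mathrm e}_i=\mu'$ on a neighborhood of $D_i^{\circ}$ and $\Phi^{\mathrm e}_i=\mathrm{id}$ near $p_{i-1},p_i$. With the tubes chosen disjoint, $\Phi^{\mathrm e}:=\Phi^{\mathrm e}_1\circ\cdots\circ\Phi^{\mathrm e}_m$ then satisfies $\mu\circ\Phi^{\mathrm e}=\mu'$ on a full neighborhood of $D$ (on the overlaps near $\partial D_i^{\circ}$, which lie inside the $W_j$, the equality $\mu=\mu'$ from the corner step already persists). Finally $\Phi:=\Phi^{\mathrm c}\circ\Phi^{\mathrm e}$, a composition of Hamiltonian diffeomorphisms of $X$, is in particular a symplectomorphism with $\mu\circ\Phi=\mu'$ on a smaller neighborhood of $D$, as required.

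I expect the only genuine obstacle to be the bookkeeping that keeps the corner and edge modifications from interfering: one must choose the corner balls disjoint, the edge tubes disjoint, and the removed caps deep inside the already-fixed corner regions, so that each modification is either supported where the other is the identity or supported where $\mu$ and $\mu'$ already agree. No new hard analysis is needed — the two ATF-specific inputs used, the $\C^2$-corner model near a vertex and the Arnold–Liouville normal form collapsing the meridian of $D_i$, are precisely the ones already established and exploited in Lemmas~\ref{Lemma:InterpolatingtorusfibrationonC^2} and~\ref{lmInterpolation1}.
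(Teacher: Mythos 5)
The corner part of your plan is sound: Eliasson/equivariant Darboux charts at the elliptic--elliptic points, together with the observation that the integral affine identification of the base corner is pinned down by the vertex and the two edge directions, do give $\mu\circ g_i=\mu'$ on a genuine neighborhood of each $p_i$, and the Alexander-trick-plus-cutoff of Lemma~\ref{Lemma:InterpolatingtorusfibrationonC^2} lets you realize $g_i$ by a globally defined symplectomorphism supported near $p_i$. But there are two problems along the edges. First, the claim that $\mu|_{D_i}=\mu'|_{D_i}$ automatically is false: a Hamiltonian circle action on $(S^2,\omega|_{D_i})$ with prescribed fixed points and prescribed moment image is unique only up to conjugation by a symplectomorphism fixing the poles, not as a map (conjugate the standard rotation by any area-preserving map fixing the poles). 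This is exactly why the edge step in the proof of Proposition~\ref{lmInterpolation1} constructs a Hamiltonian $H_{ij}$ conjugating the two restrictions rather than asserting equality; your own edge step repeats that construction, so this error is self-correcting, but it signals the real issue.

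The genuine gap is the jump from agreement \emph{along} $D_i$ to agreement on a \emph{neighborhood} of $D_i^{\circ}$ in $X$. The construction you import from Lemma~\ref{lmInterpolation1} (conjugate the two circle actions on the sphere, extend the Hamiltonian by the normal-bundle projection, cut off, flow) was designed only to make the maps $\mu_i$ glue continuously along the double curves, i.e.\ to agree on the surface $X_{ij}$ itself; it says nothing about the two fibrations at points off the divisor. After your $\Phi^{\mathrm e}_i$, the maps $\mu\circ\Phi^{\mathrm e}_i$ and $\mu'$ coincide on $D_i^{\circ}$, but for a point near $D_i$ and off it, $\mu'$ is governed by the full Lagrangian torus fibration of $\mu'$ in a 4-dimensional neighborhood --- equivalently by the pair of commuting Hamiltonians (the meridian-collapsing one and its complement) --- and nothing in your argument matches those with the corresponding data of $\mu$ away from $D$. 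So the asserted equality ``$\mu\circ\Phi^{\mathrm e}_i=\mu'$ on a neighborhood of $D_i^{\circ}$'' does not follow, and with it the whole conclusion fails: the proposition is precisely the statement that the two fibrations can be intertwined on an open neighborhood, which is strictly stronger than matching their restrictions to $D$. To close the gap you need a fibered uniqueness statement near the boundary divisor --- e.g.\ first intertwine the two meridian circle actions on a neighborhood of $D_i$ by an equivariant version of the symplectic neighborhood theorem, and then use Arnold--Liouville/action--angle uniqueness (or a Moser-type argument for fibration-compatible forms) to correct the remaining action coordinate by a fiber-preserving symplectomorphism, all relative to the corner regions where agreement has already been arranged. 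That fibered neighborhood argument is the actual content of Proposition~\ref{prpATFinterpolation} and is absent from your proposal.
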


\begin{proof}[Proof of Proposition \ref{prpHard} assuming Proposition \ref{prpATFinterpolation}]
    Compactify $R \times \T^2$ to $S^2 \times S^2$ by symplectic reduction on the boundary rectangle. Let $D:= B_1 \cup F_1 \cup B_2 \cup F_2$ denote the configuration of boundary divisors. Let $\om,\omega_0$ still denote the induced symplectic forms on the compactification. Here $B_1$ and $B_2$ are in the class of the base $B= [S^2 \times \{*\}]$ and $F_1$ and $F_2$ are in the class of the fibre $F:= [\{*\} \times S^2]$.

    Since $\omega$ coincides with $\omega_0$ on $U$ we have that the induced form on $S^2\times S^2$ still coincides with a standard product form in a neighborhood of the boundary divisors. In particular, these are all symplectically orthogonal. Furthermore, as the Chern class of the tangent bundle is determined by its restriction to a neighbourhood of the boundary divisors of  $S^2 \times S^2$, we also have that the first Chern classes are equal for both $\omega$ and $\omega_0$, and hence the boundary divisor $D$ is log Calabi-Yau for both the forms. By the same reasoning, we have $[\omega]=[\omega_0]$. By Theorem~\ref{Theorem:Isotopyofdivisors}, there exists a diffeomorphism  $\psi:S^2\times S^2\to S^2\times S^2$ and so that $\psi^*\omega_0=\omega$ and  $\psi(D)=D$. Restricting the diffeomorphism to the complement of $D$ gives us a diffeomorphism, still denoted by $\psi$ and mapping $R \times \T^2\to R\times \T^2,$ such that $\Phi^*\om_0=\omega$. 
   
   It remains to modify $\Phi$ further so it preserves $\mu_{\std}$ near the boundary. This is achieved by postcomposing $\Phi$ with an appropriate $\Psi$ as in Proposition \ref{prpATFinterpolation}. 

\end{proof}

\subsection{Interpolation of ATF's}\label{seInterpolation}

We turn to prove Proposition \ref{prpATFinterpolation}. The proof will require some observations from symplectic topology.

\begin{prop}\label{Lemma:Isotopytoidentityonaneighbourhood}
Let $(M,\om)$ be a symplectic manifold and let $S\subset M$ be a symplectic submanifold. Let $\Phi$ be a symplectomorphism of an open neighbourhood $U$ of $S$ onto an open neighbourhood of $S$. Suppose $\Phi|_S = \id$ and $d\Phi$ restricted to $TM|_S$ is the identity as well. 
Then $\Phi$ is Hamiltonian isotopic to the identity on a possibly smaller neighbourhood $U'$ of $S$. 
Furthermore, if $\Phi$ is the identity on an open neighbourhood $V$ in $M$ of a closed subset $B\subset S$ 
then the Hamiltonian can be taken to be $0$ on an open neighbourhood of $B$. 
\end{prop}
\begin{proof}
  Consider the graph $\Gamma(\Phi)$ of $\Phi$ in $(U \times M, \om \oplus (-\om))$. $\Gamma(\Phi)$ is a properly embedded Lagrangian submanifold of $U\times M$. By the assumption regarding $\Phi|_S$ and $d\Phi$ restricted to $TM|_S$ is the identity, by taking arbitrarily small neighborhoods $U'$ of $S$, the intersection of $\Gamma(\Phi)$ with neighborhoods $U'\times M$ of $S\times S$ is arbitrarily $C^1$ close  to the diagonal Lagrangian $\Delta\simeq U'$. Identifying a Weinstein neighborhood of $\Delta$ with an open neighborhood of the $0$ section in $T^*U'$, the Lagrangian $\Gamma(\Phi)$ becomes the graph of a closed $1$-form $\alpha$ over $U'$. The latter vanishes on $S$ by assumption. We may assume $U'$ deformation retracts to $S$. Therefore, the class of $\alpha$ vanishes, and in particular, $\Phi$ is Hamiltonian isotopic to the identity. Moreover, the primitive $f$ is constant along $S$ and may be taken to be $0$ there. Suppose $\Phi$ is identity along an open neighborhood of $A$. We may pass to a smaller neighborhood which deformation retracts to a subset of $S$. Since the Hamiltonian vanishes on $S$, it vanishes on that neighborhood as well.

\end{proof}

\begin{lemma}\label{Lemma:Makingthenormalidentity}
Let $(M,\om)$ be a symplectic $4$-manifold and $S\subset M$ a  symplectic sphere. Let $U$ be an open neighbourhood of $S$ and $\phi:U\rightarrow M$ a symplectic embedding whose restriction $\phi|_S$ is the identity. Then there exists a Hamiltonian isotopy $\beta_t:M \rightarrow M$ such that $\beta_t|_S = id$ and on  $d\beta_1 = d\phi$ on $S$. Moreover, if there is a closed subset $A\subset S$ for which $\phi$ is the identity on an open neighborhood of $A$ in $M$, then $\beta$ can be taken to be the identity on some open neighborhood of $A.$
\end{lemma}

The proof of Lemma \ref{Lemma:Makingthenormalidentity} relies on the following Lemma from \cite{Michor}.

\begin{lemma}
    
[Relative Poincaré Lemma] \label{RelativePoincareLemma}
Let $M$ be a smooth finite dimensional manifold, let $N \subset M$ be a closed submanifold, and let $k \geq 0$. Let $\omega_t$ $0\leq t\leq1$ be a 1-parameter family of closed $(k+1)$ forms on $M$ which vanishes when pulled back to $N$. Then there exists a 1-parameter family of $k$-form $\varphi_t$ on an open neighborhood $U$ of $N$ in $M$ such that $d \varphi_t=\omega_t|_ U$ and $\varphi=0$ along $N$. If moreover $\omega_t=0$ for all $t$ on $TM|_N$, then we may choose $\varphi_t$ such that the first space derivatives of $\varphi_t$ vanish on $N$.
    
\end{lemma}
\begin{proof}
The proof is exactly as in Lemma 31.16 in \cite{Michor} rephrased for 1-parameter families. 
\end{proof}
\begin{proof}[Proof of Lemma \ref{Lemma:Makingthenormalidentity}]
Since $d\phi$ preserves the tangent space to the symplectic submanifold $S$, it also preserves the normal space, and so defines a symplectic automorphism of $N(S)$. The structure group $Sp(2, \mathbb{R})$ of $N(S)$ is   homotopy equivalent to $U(1)$. Since $U(1)$ is Abelian, one deduces from this that the space of symplectic automorphisms of $N(S)$ is homotopy equivalent to the space of maps $S^2\to S^1$ which connected, since $\pi_2(S^1) = 0$. Therefore, we can isotope the automorphism $A:= d\phi|_{N(S)}$ to the identity via a path $A_t$ of fiberwise symplectic automorphisms. Consider the 1-parameter family of diffeomorphisms $\phi_t:= \exp \circ A_t \circ \exp^{-1}$ where $\exp$ is the exponential map from the normal bundle to an open neighbourhood of $S$. Note that $d\phi_t|_{N(S)} = A_t$. Consider the 1-parameter family of symplectic forms $\om_t:= \phi_t^*\om$. As $A_t$ are symplectic automorphisms, we see that $\om_t|_{T_S M} = \om$ for all $t$ (where $T_S M$ denotes the restriction of $TM$ to $S$).  By Lemma~\ref{RelativePoincareLemma}, we can find a 1-parameter family of primitives $\sigma_t$ such that $d\sigma_t = \frac{d} {dt}\om_t$ and $\sigma_t$ vanishes along $S$. Mimicking the proof of Moser's theorem we then obtain a time dependent vector fields $X_t$ whose flow gives us a path of diffeomorphisms $\psi_t$ such that $\psi_t^*\om_t =\om$ and moreover $d\psi_t|_{T_S M} = \id$. Then $d(\phi_t \circ \psi_t)|_{T_S M} = A_t$ and $\phi_t\circ\psi_t$ is a path of symplectomorphisms connected to the identity. 
Let $Y_t$ be the symplectic vector field which is the time derivative of  $\phi_t\circ\psi_t$. 
Then in a small simply connected neighbourhood $U$ of the sphere $S$, $Y_t$ is Hamiltonian and is generated by a function $H_t$. Considering the time 1-flow of the bumped-off Hamiltonian $\rho H_t$ gives us the required global symplectomorphism $\beta: M \rightarrow M$ whose derivative by construction on the normal of $S$ is $A_1 = d\phi|_{N(S)}$. For the last sentence in the statement, note that the path $\phi_t$ is identity on a neighborhood of $B$ by construction. Therefore, $\sigma_t$ can be chosen to vanish on a neighborhood of $B$.  
\end{proof}

\begin{prop}\label{Prop:Dehntwist}
   Let $S \subset M$ be a symplectically embedded sphere in a symplectic 4-manifold $M$. Suppose $M$ is equipped with a Hamiltonian torus action for which $S$ is invariant. Let $U_x$ and $U_y$ denote small neighbourhoods in $M$ of the toric fixed points $x,y$ respectively. Given any symplectomorphism $\phi:M \rightarrow M$ such that $\phi(S)=S$ and $\phi|_{U_x} = \phi|_{U_y} = \id$ there exists an open neighborhood $U$ of $S$ and local symplectomorphism $\psi: U \rightarrow U$ such that 
    \begin{enumerate}
        \item $\psi$ preserves the moment map $\mu$
        \item $\psi$ is identity on $U_x\cup U_y$
        \item $\phi \circ \psi$ restricted to $S$ is isotopic to the identity via symplectomorphisms of $S$ that are identity on $U_x\cap S$ and $U_y\cap S$.
    \end{enumerate}
\end{prop}
\begin{proof}
 Let $\om_0$ denote the standard symplectic form on the sphere $S^2$ and let $\Omega_{x,y}$ denote the space of symplectic forms on $S^2$ that are in the same cohomology class as $\om_0$ and are equal to the standard symplectic form in some fixed small neighbourhood $V_x$ and $V_y$ of the points $x,y \in S^2$ respectively. Note that all elements of $\Omega_{x,y}$ tame the standard complex structure. Thus $\Omega_{x,y}$ is convex. By Moser's theorem we have the following fibration 
 \[
\Symp_{\id,x,y}(S^2) \rightarrow \Diff_{\id,x,y}(S^2) \twoheadrightarrow \Omega_{x,y}(S^2)
 \]
where $\Diff_{\id,x,y}$ denotes the space of diffeomorphisms that are the identity on $V_x$ and $V_y$ and $\Symp_{\id,x,y}$ are the symplectomorphisms that are the identity on the same open neighbourhoods. As $\Omega_{x,y}$ is  contractible, we get a homotopy equivalence $\Symp_{\id,x,y} \simeq \Diff_{\id,x,y}$. 
By Proposition 2.4 in  \cite{FarbMargalit}, we know that $\pi_0(\Diff_{\id,x,y}) \simeq \Z$ and furthermore each component contains a representative of a Dehn twist. As the Dehn twists are in fact symplectomorphisms identity on $V_x$ and $V_y$, the above discussion implies that each isotopy class of $\Symp_{\id,x,y}$ is represented by a Dehn twist. 

By assumption $\phi|_S \in \Symp_{\id,x,y}$. Let $\Lambda_n$ denote the Dehn twist isotopic to $\phi|_S$. Since $\Lambda_n$ is identity on $U_x\cap S$ and $U_y\cap S$ we can pick a trivialization of the normal bundle over the complement of $U_x,U_y$, assuming WLOG they are invariant, and extend $\Lambda_n$ to an equivariant symplectomorphism of  the normal bundle as $\tilde{\Lambda}_n (x,v):= (\Lambda(x),v)$ for all $(x,v) \in (S\setminus (U_x\cup U_y)) \times \R^2$. We take $\psi=\Tilde{\Lambda}^{-1}$. Clearly, $\psi$ maps a neighborhood $U$  to itself and preserves $\mu$.  Moreover, $\phi \circ \psi$ is isotopic to the identity via symplectomorphism that are identity on $U_x$ and $U_y$. 
 
\end{proof}

\begin{proof}[Proof of Proposition \ref{prpATFinterpolation}]
By picking Lagrangian sections $\sigma,\sigma'$ of $\mu,\mu'$ respectively over $V$,  Arnold-Liouville provides us with a symplectic map $\Phi_0:U\to X$ such $\mu'=\mu\circ\Phi_0$ over $V$. The claim will follow if we show that the map $\Phi_0$ is Hamiltonian, at least after composing with a fiber preserving local symplectomorphism. Indeed, bumping off the Hamiltonian away from the boundary divisors, we obtain a map $\Phi:X\to X$ which agrees with $\Phi_0$ near the boundary divisors. This $\Phi$ satisfies the requirements of the Proposition. 

To proceed, let $D_i$ denote the irreducible components of the boundary divisor $D$. Let $D_{ij}:= D_i \cap D_j$ ($i \neq j$). It is automatic that $\Phi_0(D_{ij})= D_{ij}$. Using the argument in the proof of \ref{Lemma:InterpolatingtorusfibrationonC^2} we obtain a Hamiltonian diffeomorphism $\Psi_{vert}:X\to X$ so that $\Theta_{vert}:= \Psi_{vert}\circ\Phi_0$ is the identity on a neighbourhood of the points $D_{ij}$ and still preserves $D$. Namely, we symplectically identify a small ball $B_{ij}$ centered at  $D_{ij}$  with the standard ball in $\C^2$ so that $D_i,D_j$ map to the coordinate axes. We use the Alexander trick to connect $\Phi^{-1}|_{B_{ij}}$ to the identity via symplectomorphisms which preserve the coordinate axes, and then modify to get a global symplectomorphism which is supported in $\cup_{i,j}B_{ij}$, preserves the divisor $D$ and agrees with $\Phi_0^{-1}$ on $B_{ij}$ (after slightly shrinking). The details are exactly the same as in the proof of Lemma \ref{Lemma:InterpolatingtorusfibrationonC^2}. 

For each $i$, let $\psi_i:U\to U$ be a map satisfying the properties guaranteed by Proposition \ref{Prop:Dehntwist} with respect to $\Theta_{vert}\circ\psi_i$. We may extend $\psi_i$ as identity in a neighborhood of the divisors $D_j$ for $j\neq i$. We can take this neighborhood to still be $U$. Let $\psi$ be the composition of the $\psi_i$ taken in any order. Then   $\Theta_{vert}\circ\psi|_{D_i}$ is isotopic to the identity via symplectomorphisms that are identity near a fixed neighbourhood of $D_{ij}$. Since $\psi$ preserves $\mu'$, we adjust our initial choice of map $\Phi_0$ to $\Phi_0\circ\psi$. We thus assume WLOG that $\Theta_{vert}|_{D_i}$ is isotopic to the identity via symplectomorphisms that are identity near a fixed neighbourhood of $D_{ij}$.

We extend this Hamiltonian isotopy of $\Theta_{vert}|_{D_i}$to a neighborhood of $D$  using the normal projection. Call the time $1$ map of this isotopy $\alpha$. Then $\Theta_{nbhd}:=\alpha^{-1}\circ\Theta_{vert} =\alpha^{-1}\circ \Psi_{vert}\circ\Phi_0$ is a local symplectomorphism which is identity on $D$ and on a neighborhood of the $D_{ij}$ and is Hamiltonian isotopic to $\Phi_0$.

Lemma \ref{Lemma:Makingthenormalidentity} gives us an additional Hamiltonian symplectomorphism $\beta:X\to X$ such that $d\beta|_{D_i} = d\Theta_{nbhd}|_{D_i}$. Let $\Theta_{der}:=\beta^{-1}\circ \Theta_{nbhd}$. The local symplectomorphism $ \Theta_{der}$ is Hamiltonian isotopic to $\Phi_0$ and satisfies $\Theta_{der}|_{D_i} = \id,$ $d\Theta_{der}|_{D_i} = \id,$ and $\Theta_{der}$ is the identity near $D_{ij}.$

Thus the restriction of $\Theta_{der}$ to a neighborhood of any of the components $D_i$ satisfies the conditions of Proposition \ref{Lemma:Isotopytoidentityonaneighbourhood} with $A$ a neighbourhood of the vertices $D_{ij}$. Therefore, $\Theta_{der}$ is Hamiltonian in that neighborhood and the Hamiltonian generating it can be taken to be $0$ in a neighbourhood of $\bigcup_{i,j}D_{ij}$. Thus $\Theta_{der}$ is Hamiltonian isotopic to the identity on a neighborhood of $D$. Since $\Phi_0$ is Hamiltonian isotopic to $\Theta_{der}$, the claim follows.


\end{proof}

\section{Symplectic Kulikov models from toric degenerations}\label{Sec:KulikovfromFano} 

In this section we prove Theorem \ref{thmToricKulikov}. The proof proceeds by first constructing a suitable resolution of the singular locus of the toric degeneration, and then showing that this resolution admits a symplectic form with the required properties. Finally, we compute the integral affine structure. 

Let  $X$ be a smooth complex 3-dimensional toric Fano variety endowed with a toric symplectic form $\omega$. Let $P\subset \mathfrak{t}^*$ be the moment polytope, and let $\mu_X: X \rightarrow P$ denote the moment map. We denote by $D_i$ the toric boundary divisors and by $F_i$ the corresponding faces of $P$.  The toric boundary $\sum D_i$ of $X$ is the zero locus of a section $s_0$ of the anti-canonical bundle $-K_X$. We fix a smooth hypersurface $H$ in $X$ cut out transversally as the zero locus of another section $s_1$ of $-K_X.$ This will be our generic fiber. 

Consider the  family $\pi: E \rightarrow \CP^1$ where $E$ is defined as  
\begin{equation}\label{Equation:Pencilofsections}
  {E}:= \left\{\left(x,[\lambda_0:\lambda_1]\right) \subset X \times \CP^1~|~ \lambda_0s_0(x) + \lambda_1 s_1(x)=0\right\}, 
\end{equation}
and $\pi$ is the restriction of the projection from $X \times \CP^1 \rightarrow \CP^1$ to $E$. Our aim is to modify this family into a symplectic Kulikov model. The first problem we are faced with is that the total space $E$ has singularities. 
\subsection{Singularities of the total space and their resolution}
\begin{defn}{(Base locus)}
The base locus of the pencil defined by $s_0,s_1$ is the set of points on $x \in X$ such that $s_0(x)=s_1(x)=0$. 
\end{defn}

\begin{lemma}\label{Lemma:Baselocusintersectsallstrata}
For any choice of $s_1,$ the base locus of the pencil defined by $s_0,s_1$ intersects each connected component of the codimension 2 toric stratum. 
\end{lemma}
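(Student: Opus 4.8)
The plan is to reduce the statement to a purely lattice-theoretic fact about the moment polytope $P$ of a smooth toric Fano threefold. A codimension $2$ toric stratum of $X$ corresponds to an edge $F$ of $P$; equivalently it is a toric curve $C_F \cong \CP^1$ fixed by a codimension-$2$ subtorus. The base locus is $V(s_0) \cap V(s_1)$, where $V(s_0) = \sum_i D_i$ is the full toric boundary. Since $C_F$ lies inside $V(s_0)$ already, what must be shown is that $V(s_1)$ meets $C_F$, i.e.\ that the restriction $s_1|_{C_F}$ is not nowhere-vanishing. So the first step is to record that $s_0, s_1 \in H^0(X, -K_X)$ and that $-K_X = \mathcal{O}_X(\sum_i D_i)$ is very ample (or at least globally generated and of positive degree on every toric curve), which holds because $X$ is toric Fano.

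**The degree computation.**
Next I would compute $(-K_X) \cdot C_F$. By adjunction, or directly from the fan description, $(-K_X)\cdot C_F = D_{i}\cdot C_F + D_{j}\cdot C_F + (\text{other }D_k)\cdot C_F$, where $D_i, D_j$ are the two facet divisors containing $C_F$ and the remaining terms are nonnegative. The key point is that $-K_X$ is ample, hence $(-K_X)\cdot C_F \geq 1 > 0$; in fact for a smooth toric Fano one has $(-K_X)\cdot C_F \geq 1$ always, and this already suffices. Since $s_1$ is a global section of a line bundle $L = -K_X$ with $\deg(L|_{C_F}) = (-K_X)\cdot C_F \geq 1$, the restriction $s_1|_{C_F}$ is a section of a positive-degree line bundle on $\CP^1$, so it has a zero on $C_F$ unless it is identically zero. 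Thus the proof splits into two cases.

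**Ruling out the identically-zero case.**
The remaining obstacle — and I expect this to be the only real content — is to show $s_1|_{C_F}$ is not identically zero, for \emph{any} choice of $s_1$. (Note the lemma is stated for arbitrary $s_1$, not generic $s_1$, so one cannot invoke genericity.) But this follows because $-K_X$ is globally generated: the linear system $|-K_X|$ has no base points, so in particular for any point $p \in C_F$ there is \emph{some} section not vanishing at $p$; more to the point, the sublinear-system argument shows that if $s_1|_{C_F} \equiv 0$ then $s_1 \in H^0(X, -K_X - C_F)$... which is the wrong kind of statement since $C_F$ is a curve. The clean argument instead: $s_1|_{C_F} \equiv 0$ would force $C_F \subset V(s_1)$, and then the base locus contains $C_F$, so the conclusion of the lemma (base locus meets this component of the stratum) holds trivially. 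In other words, in \emph{either} case — $s_1|_{C_F}$ has an isolated zero on $C_F$, or $s_1|_{C_F}\equiv 0$ — the base locus $V(s_0)\cap V(s_1)$ meets $C_F$, which is exactly the connected component of the codimension-$2$ stratum attached to the edge $F$. Running this over all edges $F$ of $P$ gives the statement.

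**Summary of the plan.**
So concretely: (1) identify components of the codim-$2$ stratum with toric curves $C_F$, $F$ an edge of $P$, and note $C_F \subset V(s_0)$; (2) observe $s_1 \in H^0(-K_X)$ and $(-K_X)\cdot C_F \geq 1$ since $X$ is Fano; (3) restrict $s_1$ to $C_F \cong \CP^1$: a section of $\mathcal{O}_{\CP^1}(d)$ with $d \geq 1$ either vanishes somewhere or is identically zero; in both sub-cases $C_F \cap V(s_1) \neq \emptyset$; hence $C_F$ meets the base locus. The main (and essentially only) subtlety is remembering to handle the $s_1|_{C_F}\equiv 0$ case rather than quietly assuming $s_1$ generic — it is handled for free by the observation that then $C_F$ lies in the base locus outright.
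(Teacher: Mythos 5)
Your proposal is correct and follows essentially the same route as the paper: restrict $s_1$ to the toric curve $C_{ij}=D_i\cap D_j$, use ampleness of $-K_X$ to get positive degree on $C_{ij}\cong\CP^1$, and conclude the restriction has a zero. Your explicit treatment of the $s_1|_{C_{ij}}\equiv 0$ case is a harmless (indeed trivially subsumed) refinement of the same argument, since a vanishing or identically zero section still yields a point of the base locus on $C_{ij}$.
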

\begin{proof}
    Let $C_{ij}$ be the curve  $D_i \cap D_j$. 
    The anti-canonical bundle is ample. Therefore, so is its pullback to $C_{ij}$. So $-K_X|_{C_{ij}}$ has strictly positive degree. It follows that the restriction of $s_1|_{C_{ij}}$ has a zero. 
 
\end{proof}
\begin{lemma}\label{Lemma:Singularpointsnearcentralfibre}

    Assume $H:= s_1^{-1}(0)$ intersects each positive dimension toric stratum transversely\footnote{According to \cite[\S6.6]{Mikhalkin2004} this holds if $H$ is the closure of a very affine hypersurface with Newton polytope $P$ close enough to the tropical limit.
    }. 
    Then the singular points of the total space $E$ 
    are the points in the fiber over $\lambda_1=0$ which are the intersection of the base locus with the codimension 2 toric strata.
\end{lemma}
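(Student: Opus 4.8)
The plan is to apply the Jacobian criterion to $E$, viewed as the zero locus inside the smooth fourfold $X\times\CP^1$ of the section $F=\lambda_0 s_0+\lambda_1 s_1$ of $\mathrm{pr}_X^{*}(-K_X)\otimes\mathrm{pr}_{\CP^1}^{*}\mathcal{O}(1)$. Once this line bundle is trivialized near a point $p\in E$, the hypersurface $E$ is singular at $p$ exactly when the differential of the resulting local holomorphic function vanishes at $p$, a condition independent of the trivialization once $p\in E$. I would first dispose of everything lying over the complement of the base locus $B=\{s_0=s_1=0\}$: the assignment $x\mapsto(x,[-s_1(x):s_0(x)])$ is a regular section of $\mathrm{pr}_X$ over $X\setminus B$ whose image is all of $E\cap\mathrm{pr}_X^{-1}(X\setminus B)$, so this open part of $E$ is isomorphic to the smooth variety $X\setminus B$ (this also exhibits $E$ as the blow-up of $X$ along $B$). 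Hence $\mathrm{Sing}(E)$ lies over $B\subset\{s_0=0\}=\sum_i D_i$, and it remains to decide, for each $x\in B$, which points of $\{x\}\times\CP^1\subset E$ are singular.

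For this I would compute $dF$ at a point lying over $x\in B$ in a trivialization, writing $f_0,f_1$ for $s_0,s_1$ locally, so that $f_0(x)=f_1(x)=0$. On the chart $\lambda_1=1$ one finds $dF|_{(x,[0:1])}=df_1(x)$, which is nonzero since $H=s_1^{-1}(0)$ is smooth; on the chart $\lambda_0=1$ with coordinate $t=\lambda_1$ one finds $dF|_{(x,[1:t_0])}=df_0(x)+t_0\,df_1(x)$, the $dt$-component vanishing because $f_1(x)=0$. As $df_1(x)\neq0$, this last covector vanishes for some $t_0$ only if $df_0(x)$ is proportional to $df_1(x)$, and then at a unique value of $t_0$. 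Now I would stratify $B$. Using that $H$ is a generic anticanonical hypersurface --- transverse to all toric strata, and hence, being of codimension one, disjoint from the torus-fixed points --- the curve $B=H\cap\bigcup_iD_i$ lies in the smooth locus of $\sum_iD_i$ except at the finitely many points of $H\cap C_{ij}$ (nonempty by Lemma~\ref{Lemma:Baselocusintersectsallstrata}), where it is nodal. If $x$ lies in the smooth locus of $\sum_iD_i$, then $df_0(x)\neq0$ with kernel $T_xD_i$, and transversality of $H$ to $D_i$ makes $df_0(x),df_1(x)$ linearly independent, so $E$ is smooth over all of $\{x\}\times\CP^1$. If $x\in C_{ij}\cap H$, then in local coordinates with $D_i=\{z_1=0\}$ and $D_j=\{z_2=0\}$ one has $f_0=z_1z_2\cdot(\mathrm{unit})$, hence $df_0(x)=0$ and $dF|_{(x,[1:t_0])}=t_0\,df_1(x)$, which vanishes exactly at $t_0=0$; so the unique singular point of $E$ over $x$ is $(x,[1:0])$.

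Assembling the two cases gives $\mathrm{Sing}(E)=\{(x,[1:0]):x\in\bigsqcup_{i<j}(H\cap C_{ij})\}$, which under the identification $\pi^{-1}([1:0])\cong\{s_0=0\}$ is precisely the intersection of the base locus with the codimension-two toric strata, as claimed. The Jacobian computations are routine; the step I expect to require the most care is the stratification of $B$ --- in particular the point that $B$ avoids the torus-fixed points, so that no spurious singular points arise over the triple intersections. This uses transversality of $H$ to the zero-dimensional toric strata, i.e.\ genericity of $s_1$ (cf.\ the footnote to the statement), which is mildly stronger than the transversality to positive-dimensional strata in the stated hypothesis.
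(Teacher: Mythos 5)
Your proof is correct and follows essentially the same route as the paper: the Jacobian criterion for $F=\lambda_0 s_0+\lambda_1 s_1$ computed in the two affine charts of the $\CP^1$ factor, with smoothness of $H$ ruling out singular points over the fiber at infinity and transversality of $H$ to the divisors $D_i$ ruling them out over the smooth locus of $\sum_i D_i$, leaving only the points over the base locus meeting the codimension $2$ strata. Your extra care at the end is welcome and goes slightly beyond the paper's write-up: you verify that the points over $H\cap C_{ij}$ really are singular via the local model $f_0=z_1z_2\cdot(\mathrm{unit})$, and you correctly flag that excluding singular points over the torus-fixed points requires $H$ to miss them, a genericity condition not literally contained in the stated hypothesis on positive-dimensional strata (the paper's own final sentence is terse here and even cites the wrong lemma).
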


\begin{proof}
In a neighborhood of the point at infinity  $\lambda_0=0$, the defining  equation can be written as $\lambda s_0+s_1=0$ for $\lambda$ the coordinate $\lambda_0/\lambda_1$. In particular on the fiber over infinity, $s_1=0$, and the differential coincides with $ds_1+s_0d\lambda$ which has full rank by smoothness of $H$ and independence of $d\lambda$ from $ds_1$. So, we focus on the complement of the fiber over infinity. Set $\lambda_0=1$ and write now $\lambda=\lambda_1$. The differential is now $ds_0+\lambda ds_1+s_1d\lambda$. Since $d\lambda$ is independent of $ds_0,ds_1$, singular points can only occur when $s_1=0$ which implies also $s_0=0$. Since $H$ meets each component transversely, the relation $ds_0+\lambda ds_1=0$ can only hold if $\lambda=0$ and therefore $ds_0=0$. So, singular points occur only at intersections of $H$ with higher codimension strata of the toric boundary. By Lemma \ref{Lemma:Baselocusintersectsallstrata} these are precisely the intersections with the $1$-dimensional strata.

\end{proof}
 
From now on we assume $H$ satisfies the hypothesis of Lemma \ref{Lemma:Singularpointsnearcentralfibre}. Let $p$ be a singular point of the total space $E$ occuring as the intersection of $H=s_1^{-1}(0)$ with a codimension $2$ stratum $S$. The stratum $S$ is the is the intersection of a pair of divisors $D_1,D_2$ which are locally given by certain regular functions $z_1=0$ and $z_2=0$. In a neighborhood of $p$ in the ambient space $X\times\CP^1$, these functions together with $z_3=\lambda$ and $z_4=s_1$ form a local coordinate system in which $p$ is given by the equation $z_1z_2=z_3z_4.$ Such a singularity is called an ordinary double point singularity. It admits a resolution by \emph{small blow up} wherein $p$ is replaced by a copy of $\CP^1$, rather than by a codimension $1$ variety. Moreover, there are two distinct such resolutions. To describe this write $\mathcal{Z}:= \left\{z_1z_4 - z_2z_3 = 0\right\}\subset\C^4$. The two resolutions are as follows. 

\[
\mathcal{Z}^1_{sb}:= \left\{\left( (z_1,\cdots, z_4),[u;v]\right) \in \C^4 \times \CP^1 ~ | ~ z_1v= z_3u~~~ z_2v=z_4u \right\} 
\]
\[
\mathcal{Z}^2_{sb}:= \left\{\left( (z_1,\cdots, z_4),[u;v]\right) \in \C^4 \times \CP^1 ~ | ~ z_1v= z_2u~~~ z_3v=z_4u \right\} 
\]

A more geometric description is as follows. Consider the proper transform $\tilde{\mathcal{Z}}$ of $\mathcal{Z}$ in the blow up of $\C^4$ at the origin.  The exceptional locus $\tilde{\mathcal{Z}}$ is a copy of $\CP^1 \times \CP^1$.  Contracting along one of the rulings yields a construction of the small blow-up described above. The choice along which $\CP^1$ to contract gives the two small blow-ups. 

There is a natural projection map 
\begin{align*}
    \pi_{sb}: \mathcal{Z}_{sb} &\longrightarrow \mathcal{Z} \\
    ((z_1,\cdots, z_4),[u;v]) &\mapsto (z_1,\cdots, z_4)
\end{align*}
which is a biholomorphism away from the exceptional locus $((0,\cdots, 0),[u;v])$.

\subsection{Symplectic form on the small blow up}
The local model just described can be implanted to resolve any ordinary double point singularity in a variety $E$. The small blow up is naturally endowed with an integrable complex structure. However, even if the original manifold is K\"ahler away from the double points, the small blow up does not give rise to a K\"ahler structure. We would like to endow the small blow up with a symplectic form taming the complex structure. This is only possible under some topological conditions formulated in \cite{Conifold}. We recall the details. 

We restrict attention to the following setup, Let $(M,J)$ be a projective complex 3-dimensional variety with ordinary double point singularities at finitely many points $p_1 \cdots, p_k$ and a holomorphic embedding into a smooth projective toric variety $i: (M,J) \rightarrow Y$. We endow $M$ with a symplectic form $\om := i^*\om_{Y}$ which is the pull-back of a toric K\"ahler form on $Y$. Such a form is K\"ahler away from the nodes on $M$ and we refer to this as a \emph{singular projective K\"ahler variety} \footnote{\cite{Conifold} introduce a notion of nodal singularity of a symplectic manifold which allows to work without the restriction to projective varieties. We do not need this generality here however.} Let $(M_{sb},J_{sb})$ denote a choice small blow-up of $(M,\om,J)$ at the singular points and let $\pi_{sb}: M_{sb} \rightarrow M$ denote the standard projection.  

\begin{lemma}\label{Lemma:Family_of_Form_small_blowup}
 Assume there exists a $4$-cycle $D\in C_*(M_{sb};\R)$ which is a real linear combination of smooth cycles and such that the homological intersection satisfies $[D] \cdot [C] > 0$ for any exceptional sphere $C$.  Then there exists a representative $\alpha_{D}$ of the Poincar\'e dual to $[D]$ and an $\epsilon>0$ such that for any $t\in(0,\epsilon)$ the form $\omega_{tD}:= \pi_{sb}^* \om +t\alpha_{D}$ is  symplectic and  tames the complex structure $J_{sb}$.  
\end{lemma}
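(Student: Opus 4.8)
The plan is to exhibit a closed $2$-form representing $\mathrm{PD}[D]$ which is positive on each exceptional sphere, and then run a Moser/convexity argument showing that a small positive multiple added to $\pi_{sb}^*\omega$ tames $J_{sb}$. First I would observe that $\pi_{sb}^*\omega$ is already a closed $2$-form on $M_{sb}$ which is non-degenerate and tames $J_{sb}$ everywhere except along the exceptional set $\mathcal{B}$, where it degenerates precisely in the direction tangent to the exceptional spheres (since each $C\in\mathcal{B}$ is contracted to a point by $\pi_{sb}$). So the only place where the taming condition can fail is in a neighborhood of $\mathcal{B}$, and there the failure is confined to a rank-$2$ subbundle — the complex tangent line $TC$ together with its $J_{sb}$-image. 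Thus what one needs is a closed form $\alpha_D$ whose restriction to each $C$ is an area form with the correct (positive) orientation; then $\pi_{sb}^*\omega + t\alpha_D$ will be non-degenerate on $M_{sb}$ for all small $t>0$, and by a compactness argument over $M_{sb}$ it will also tame $J_{sb}$ for $t$ small.

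Next I would construct $\alpha_D$. Since $D$ is a real linear combination of smooth (embedded, or at least smoothly mapped) $4$-cycles, write $D=\sum_k a_k D_k$ with $D_k$ smooth; each $D_k$ admits a Thom form $\tau_k$ supported in a tubular neighborhood of $D_k$, i.e.\ a closed $2$-form Poincar\'e dual to $[D_k]$ localized near $D_k$, and we set $\alpha_D=\sum_k a_k\tau_k$, a closed $2$-form with $[\alpha_D]=\mathrm{PD}[D]$. The key point is that for any exceptional sphere $C\in\mathcal{B}$ we then have $\int_C \alpha_D = [D]\cdot[C] > 0$ by hypothesis. Because each $C$ is a $J_{sb}$-holomorphic sphere, $TC$ carries a canonical orientation, and $\int_C\alpha_D>0$ says exactly that $\alpha_D|_C$ is, on average, a positively oriented area form. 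To upgrade "on average" to "pointwise" along $C$, I would modify $\alpha_D$ by adding $d\beta$ for a compactly supported $1$-form $\beta$ near $\mathcal{B}$: on a tubular neighborhood of $C\cong S^2$ one can always adjust a closed $2$-form within its cohomology class so that its restriction to $C$ is a chosen area form of total area $\int_C\alpha_D$; doing this near each $C$ (the exceptional spheres are disjoint) produces the desired representative, still denoted $\alpha_D$. This step uses only that $C$ is a smooth $2$-sphere with a symplectically trivial-enough normal direction in the relevant $2$-plane field — i.e.\ the standard fact that on a surface a closed $2$-form is determined up to exact forms by its integral.

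Finally I would close the taming estimate. Fix an auxiliary Hermitian metric on $(M_{sb},J_{sb})$. On the compact complement of a small neighborhood $N$ of $\mathcal{B}$, $\pi_{sb}^*\omega(v,J_{sb}v)\geq c\,|v|^2$ for some $c>0$, while $|\alpha_D|$ is bounded, so $\omega_{tD}$ tames $J_{sb}$ there for $t<c/\sup_{M_{sb}}\|\alpha_D\|$. On $N$, decompose $T M_{sb}$ $J_{sb}$-orthogonally (pointwise along $C$, then extended) into the line bundle $L$ spanned by $TC$ and its complex orthogonal complement $L^\perp$: $\pi_{sb}^*\omega$ is $\geq c'|v|^2$ on $L^\perp$ (it degenerates only along $L$) and vanishes on $L$ exactly on $\mathcal{B}$, while $\alpha_D(v,J_{sb}v)\geq c''|v|^2$ for $v\in L$ near $\mathcal{B}$ by construction; a standard two-by-two block estimate (choosing $t$ small to control the cross terms $\pi_{sb}^*\omega(v_L,J v_{L^\perp})$) gives $\omega_{tD}(v,J_{sb}v)>0$ for all nonzero $v$ in $N$. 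Shrinking $\epsilon$ so both regimes hold simultaneously proves the lemma. I expect the main obstacle to be exactly this last gluing/estimate near $\mathcal{B}$: making precise that the positivity of $\alpha_D$ in the $TC$-direction can be arranged uniformly and is not destroyed by the off-diagonal terms when $\pi_{sb}^*\omega$ itself has a null direction there — i.e.\ the block-matrix positivity argument with one block degenerate. The cohomological input (existence of $D$ with $[D]\cdot[C]>0$) is what makes this possible, and it is the hypothesis the lemma isolates precisely for this reason; a cleaner alternative, which I would also consider, is to invoke the general "inflation along a positively-intersected cycle" technology of \cite{Conifold} directly rather than re-deriving the estimate by hand.
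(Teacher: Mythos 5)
Your cohomological setup is fine and matches the facts correctly: a Thom-form representative of $\mathrm{PD}[D]$, the observation that $g(\cdot,\cdot):=\pi_{sb}^*\omega(\cdot,J_{sb}\cdot)$ is positive away from the exceptional locus and degenerates there exactly on $T_xC$ (the kernel of $d\pi_{sb}$ along $C$), and the exact correction $d(\chi\, p^*\beta_0)$ making $\alpha_D|_C$ a positive area form of total area $[D]\cdot[C]>0$. But at the decisive point your route differs from the paper's. The paper does not merely arrange positivity of $\alpha_D$ along $TC$: near each exceptional curve it replaces the representative by $\lambda_C\omega_C$, where $\omega_C$ is pulled back from the standard K\"ahler form on $\mathcal{O}(-1)\oplus\mathcal{O}(-1)$ under a holomorphic identification of a tubular neighborhood $U_C$ and $\lambda_C=[D]\cdot[C]>0$; since $H_2(U_C)$ is generated by $[C]$, the difference with the original representative is exact, and one interpolates by cutting off a primitive. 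The payoff is that the resulting $\alpha_D$ tames $J_{sb}$ in \emph{all} complex directions on a whole neighborhood $V$ of the exceptional locus, so on $V$ the form $\pi_{sb}^*\omega+t\alpha_D$ is the sum of a form that is nonnegative on complex lines and one that is positive, hence tames for \emph{every} $t>0$ with no estimate whatsoever; only the compact complement of $V$ requires $t$ small. Your weaker normalization (positivity only on the line field $L=TC$) is what forces the degenerate block estimate you flag as the main obstacle.

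That estimate is repairable, but the mechanism you state is wrong: the cross terms $\pi_{sb}^*\omega(v_L,J_{sb}v_{L^\perp})$ do not involve $t$, so ``choosing $t$ small'' does nothing to them --- indeed shrinking $t$ only weakens your single positive term $t\,\alpha_D(v_L,J_{sb}v_L)$ on the degenerate block. The correct control is via the neighborhood, not $t$: since $g$ is positive semidefinite (pullback of a taming form by a holomorphic map), Cauchy--Schwarz gives $|g(v_L,v_{L^\perp})|\le\sqrt{g(v_L,v_L)}\,\sqrt{g(v_{L^\perp},v_{L^\perp})}$, and $g(v_L,v_L)/|v_L|^2\le\delta^2$ with $\delta\to 0$ as one approaches $C$. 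Writing the lower bound for $\omega_{tD}(v,J_{sb}v)$ as a quadratic form in $(|v_{L^\perp}|,|v_L|)$, positive definiteness requires an inequality of the shape $(\sqrt{c'}\,\delta+tC_2)^2<(c'-tC_1)(t c''+\delta^2)$, where $c'$ bounds $g$ from below on $L^\perp$, $c''$ bounds $\alpha_D$ from below on $L$, and $C_1,C_2$ bound $\alpha_D$; this can only be achieved by first shrinking the neighborhood so that $\delta$ is small relative to $c'c''$, and only then choosing $t$ small (also small enough for the taming on the fixed compact complement). With that order of quantifiers your argument closes, but it is genuinely more delicate than the paper's; the paper's choice of representative --- a scaled local K\"ahler form near the exceptional locus, which is exactly the \cite{Conifold} inflation you mention as a fallback --- is the cleaner path precisely because it eliminates this mixed estimate.
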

\begin{rmk}

    Note that the symplectic area of an exceptional sphere $C$ with respect to $\omega_{tD}$ is given by the homological intersection number $[tD] \cdot [C] $.
\end{rmk}
\begin{proof}
     The proof is given as part of the proof of Theorem~2.9 in \cite{Conifold}. We recall the details for self containment. Let $\tilde{\alpha}_D$ denote a 2-form in the  class  Poincar\'e dual to $D$.  For each exceptional curve $C$ fix a holomorphic isomorphism of the normal bundle to $C$ with the holomorphic vector bundle to $\mathcal{O}(-1) \oplus \mathcal{O}(-1)$. In a tubular neighborhood $U_C$ we can pull back the standard K\"ahler form  on the total space of the bundle $\mathcal{O}(-1) \oplus \mathcal{O}(-1)$ for which the area of $\CP^1$ is $1$. Shrinking the neighborhood $U_C$ if necessary, the pulled back form $\omega_C$ tames $J_{sb}$ on $U_C$. Let $\lambda_C:=[D]\cdot[C]>0$. Then the form $\lambda_C\omega_C$ still tames $J_{sb}$ on $U_C$. Moreover, since the homology of $U_C$ is generated by $[C]$, the closed form $\tilde{\alpha}_D-\lambda_C\omega_C$ is exact. We can thus interpolate $\tilde{\alpha}_D$ with $\lambda_C\omega_C$ by cutting off a primitive. We obtain a closed two form $\alpha_D$ which agrees with $\lambda_C\omega_C$ in a sufficiently small neighborhood of each exceptional curve $C$. More importantly, it tames $J_{sb}$ in an open neighborhood $V$ of the exceptional locus. By compactness of $M_{sb}\setminus V$ the closed two-form $\pi_{sb}^*\om+t\alpha_D$ is non-degenerate for $t$ small enough. On the other hand, since both forms tame $J_{sb}$ on $V$, we have that $\pi_{sb}^*\om+t\alpha_D$ is non-degenerate on $V$ for any $t$.

\end{proof}
Our aim to apply this to the singular projective K\"ahler variety $E$ embedded into $Y=X\times \CP^1$ and an appropriate choice of small blow up $E_{sb}$. For such a choice let $\Tilde{\pi}: E_{sb} \rightarrow \C$ be the composition $\pi \circ \pi_{sb} $. We start with the symplectic form $i^*\omega_Y$ induced from the inclusion of $E$ into $X\times\CP^1$.

It should be noted that unlike the usual blowup which can be realized by a surgery, and thus doesn't modify the symplectic form away from the blow-up locus, the small blow up involves a global modification of the symplectic form. In particular, the symplectic form on the regular fibers will also be affected. Our aim is to construct a cycle $D$ satisfying the condition of  Lemma \ref{Lemma:Family_of_Form_small_blowup} such that the symplectic form induced on regular fibers is closely related to the original one.

We first need to specify how to carry out the blow up. There are $N=24$ double points, and for each of them we have a binary choice. We will consider a scheme for making these choices we call \emph{greedy}. Pick any order on the facets $F_1,\dots F_m$ of the moment polytope $P$.  Proceed inductively as follows. For $F_1$ we take all double points occurring on the boundary of $F_1$ and carry out our blow up so the divisor $D_1$ is the one that gets blown up for each of them. Then at the $i$th stage we consider only the edges that have not yet been blown up, and, if there are any points not yet resolved, we blow them up inside $D_i$. From now in we refer by $E_{sb}$ to the complex manifold obtained by this procedure. 

 For each face $F_i$ let $B_i\subset s_0^{-1}(0)$ be the component of the base locus meeting $D_i,$ let $$\Sigma_i=\left\{\left(x,[\lambda_0:\lambda_1]\right) \in E~|~ x\in B_i\right\},$$
and let $\tilde{\Sigma}_i\subset E_{sb}$ be  the proper transform of $\Sigma_i.$ 
\begin{lemma}\label{lmIntersection} Pick a strictly decreasing sequence of positive real $t_1> t_2\dots> t_m>0$. The cycle $\Sigma_{t_1,\dots, t_m}:=\sum_{i=1}^mt_i\tilde{\Sigma}_i$ has positive intersection with each exceptional curve $C$. More precisely, if the underlying double point of $C$ occurs in an intersection $D_{ij}=D_i\cap D_j$ for $m\geq j>i$, the intersection number is $[\Sigma_{t_1,\dots, t_m}]\cdot [C] =t_i-t_j$. 
\end{lemma}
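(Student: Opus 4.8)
\textbf{Proof proposal for Lemma \ref{lmIntersection}.}

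The plan is to reduce the intersection number computation to a purely local calculation near each exceptional curve, together with a book-keeping argument controlling which divisor was blown up at which double point under the greedy scheme. First I would fix an exceptional curve $C$ lying over a double point $p$, which by Lemma \ref{Lemma:Singularpointsnearcentralfibre} occurs as an intersection of $H$ with a one-dimensional toric stratum $D_i \cap D_j$. The greedy scheme prescribes, for the pair $(i,j)$ with $i<j$, that the blow up at $p$ is carried out inside $D_i$ (the lower-indexed divisor), since when the algorithm reaches stage $i$ the edge $D_{ij}$ has not yet been processed. I would record this as the key combinatorial fact: every double point on $D_{ij}$ gets resolved at stage $\min(i,j)$, inside $D_{\min(i,j)}$.

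Next I would compute $[\tilde\Sigma_k]\cdot[C]$ for each $k$. The cycle $\tilde\Sigma_k$ is the proper transform of $\Sigma_k$, which sits over the base-locus component $B_k \subset s_0^{-1}(0) \cap D_k$. Away from the exceptional loci $\pi_{sb}$ is a biholomorphism, so the only contributions to the intersection with $C$ come from whether $\tilde\Sigma_k$ meets the exceptional $\mathbb{CP}^1$ over $p$. Using the explicit local model $z_1z_2 = z_3 z_4$ with $z_1 = 0$, $z_2=0$ defining $D_i$, $D_j$ and $z_4 = s_1$, and the two resolutions $\mathcal{Z}^1_{sb}, \mathcal{Z}^2_{sb}$ written out in the excerpt, I would check directly that the proper transform of $\Sigma_i$ (the one living on the divisor that \emph{is} blown up) meets $C$ transversally in one point, contributing $+1$, while the proper transform of $\Sigma_j$ (on the divisor that is \emph{not} blown up) is disjoint from $C$, contributing $0$; and $\tilde\Sigma_k$ for $k \neq i,j$ does not pass near $p$ at all. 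Hence $[\tilde\Sigma_i]\cdot[C] = 1$, $[\tilde\Sigma_j]\cdot[C] = 0$ — but this is not quite the final answer, because $C$ itself, as a class, is the exceptional curve, and the proper transform of the \emph{other} divisor's base-locus surface contributes through the self-intersection geometry of the small blow up. More carefully: in $\mathcal{O}(-1)\oplus\mathcal{O}(-1)$, the exceptional $\mathbb{CP}^1$ meets the proper transform of a generic disk in one of the two rulings directions; one surface $\tilde\Sigma_i$ meets it positively, and the transform of the surface through the other ruling, $\tilde\Sigma_j$, gives $[\tilde\Sigma_j]\cdot[C] = -1$ after one accounts for the fact that $\Sigma_j$ passed through the node and its proper transform drops by the exceptional curve. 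Summing, $[\Sigma_{t_1,\dots,t_m}]\cdot[C] = \sum_k t_k [\tilde\Sigma_k]\cdot[C] = t_i \cdot 1 + t_j\cdot(-1) = t_i - t_j$, which is positive since $i < j$ and the sequence is strictly decreasing.

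The remaining step is to confirm the sign assignment $[\tilde\Sigma_i]\cdot[C] = 1$, $[\tilde\Sigma_j]\cdot[C]=-1$ rather than the reverse, and I would do this by a careful reading of the local coordinates: $\Sigma_i$ is cut out near $p$ by $z_1 = 0$ (it lies in $D_i$) together with $z_4 = s_1$ vanishing along $B_i$, whereas the exceptional curve in $\mathcal{Z}^1_{sb}$ is $\{z_1=\cdots=z_4=0\}$ with $[u:v]$ free; substituting $z_1 = z_3 u/v$ etc. one sees exactly which surface acquires the exceptional curve as a component of its total transform. I would also need to double-check that $B_i$ and $B_j$ are the only base-locus components through $p$ — this follows from Lemma \ref{Lemma:Baselocusintersectsallstrata} and transversality of $H$ with the toric strata, since $p$ lies on exactly the two divisors $D_i, D_j$. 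The main obstacle I anticipate is getting the local intersection signs and multiplicities exactly right in the small-blow-up coordinates, in particular disentangling the contribution of the proper transform from that of the total transform of each $\Sigma_k$; the combinatorics of the greedy scheme and the global structure of the argument are straightforward by comparison.
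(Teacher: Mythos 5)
Your overall strategy coincides with the paper's: reduce to the local model $z_iz_j=\lambda s_1$ at each double point, use the greedy bookkeeping to see that a point on $D_{ij}$ with $i<j$ is resolved at stage $i$ inside $D_i$, compute $[\tilde\Sigma_i]\cdot[C]=+1$ and $[\tilde\Sigma_j]\cdot[C]=-1$, note that no other $\tilde\Sigma_k$ passes near $p$, and sum to get $t_i-t_j>0$. The final numbers agree with the paper's proof.

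However, your justification of the $-1$ has a genuine flaw. You first assert that $\tilde\Sigma_j$ is disjoint from $C$ and contributes $0$; this is impossible, since $\Sigma_j=\{z_j=s_1=0\}$ passes through the node, and the small resolution is an isomorphism away from the node with $C$ as the fibre over it, so the closed proper transform of any surface through the node must meet $C$. In fact, with the greedy choice (the one for which $\tilde D_i$ contains $C$), the planes $D_i=\{z_i=\lambda=0\}$ and $\Sigma_j=\{z_j=s_1=0\}$ lie in the same ruling of the quadric cone, so $\tilde\Sigma_j$ \emph{contains} $C$: it is the blow-up of the plane $\Sigma_j$ at the node with $C$ as its exceptional $(-1)$-curve, while $\tilde\Sigma_i$ and $\tilde D_j$ lie in the other ruling and meet $C$ transversally in one point. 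Your attempted repair, ``its proper transform drops by the exceptional curve,'' is the total-transform versus proper-transform accounting for a divisorial blow-up and has no meaning here: a small resolution has no exceptional divisor, and $\Sigma_j$ is not Cartier at the node, so there is nothing to subtract. The correct source of the $-1$ — which is exactly what the paper invokes when it says $\tilde\Sigma_j$ contains $C$ — is the normal-bundle computation: $[\tilde\Sigma_j]\cdot[C]=\deg\bigl(N_{\tilde\Sigma_j/E_{sb}}|_C\bigr)$, and since $N_{C/E_{sb}}\cong\mathcal{O}(-1)\oplus\mathcal{O}(-1)$ while $N_{C/\tilde\Sigma_j}\cong\mathcal{O}(-1)$, this degree is $-1$. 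With that step corrected (your proposed direct check in the charts $\mathcal{Z}^1_{sb},\mathcal{Z}^2_{sb}$ would in fact reveal the containment), the rest of your argument — the greedy combinatorics, the use of Lemma \ref{Lemma:Baselocusintersectsallstrata} and transversality to see that only $B_i,B_j$ pass through $p$, and the transversal $+1$ for $\tilde\Sigma_i$ — is sound and matches the paper.
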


\begin{proof}
    The first claim follows from the second. To prove the latter let us focus on the local model near one singular point $p$. In a neighborhood of the double point  $p$ the nodal projective variety $E$ is described up to units by the equation $z_iz_j=\lambda s_1$ where $z_i,z_j$ define the divisors $D_i,D_j$, and $\lambda,s_1$ are as before.  Then $\tilde{\Sigma}_i$ is the proper transform of $\{z_i=s_1=0\}$ which intersects $C$ transversally and thus intersects $C$ with intersection number $+1$ by holomorphicity. Similarly, $\tilde{\Sigma}_j$ is the proper transform of $\{z_j=s_1=0\}$ which contains $C$ and thus has intersection number $-1$ with $C$. See Figure \ref{fig:resolution_intersection}.

\end{proof}

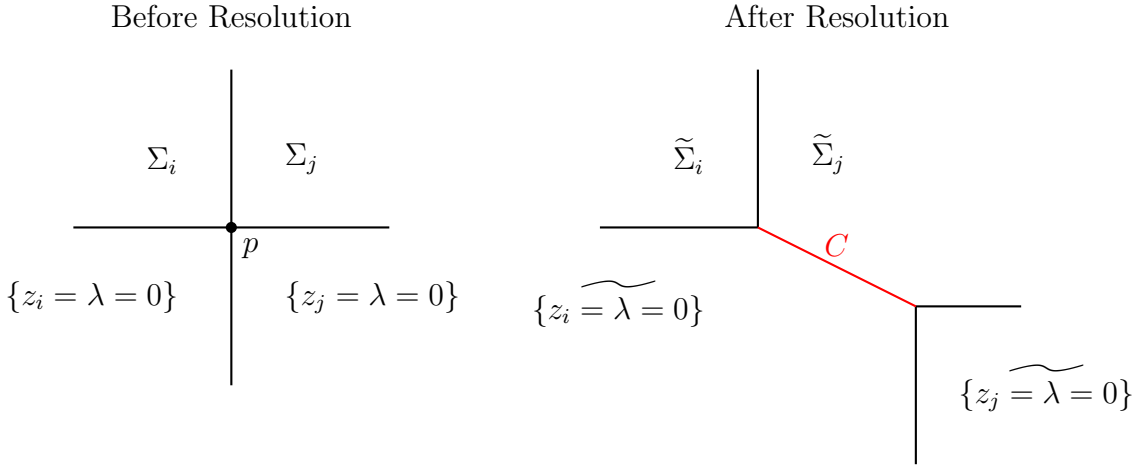
\begin{figure}[h]
    \centering
    \begin{tikzpicture}[scale=1.4]
        \begin{scope}[xshift=-2.5cm]
            \draw[thick] (0,0) -- (1.5,0);
            \draw[thick] (0,0) -- (0,1.5);
            \draw[thick] (0,0) -- (-1.5,0);
            \draw[thick] (0,0) -- (0,-1.5);
            
            \node[above right] at (0.4,0.4) {$\Sigma_j$};
            \node[above left] at (-0.4,0.4) {$\Sigma_i$};
            \node[below left] at (-0.4,-0.4) {$\{z_i=\lambda=0\}$};
            \node[below right] at (0.4,-0.4) {$\{z_j=\lambda=0\}$};
            
            \fill (0,0) circle (1.5pt);
            \node[below right] at (0,0) {$p$};
            
            \node[above] at (0,1.8) {Before Resolution};
        \end{scope}
        
        \begin{scope}[xshift=2.5cm]
            \draw[thick] (-1.5,0) -- (0,0);
            \draw[thick] (0,1.5) -- (0,0);
            
            \draw[thick] (1.5,-0.75) -- (2.5,-0.75);
            \draw[thick] (1.5,-2.25) -- (1.5,-0.75);
            
            \draw[thick, red] (0,0) -- (1.5,-0.75) node[midway, above] {$C$};
            
            \node[above right] at (0.4,0.4) {$\widetilde{\Sigma}_j$};
            \node[above left] at (-0.4,0.4) {$\widetilde{\Sigma}_i$};
            \node[below left] at (-0.4,-0.4) {$\widetilde{\{z_i=\lambda=0\}}$};
            \node[below right] at (1.8,-1.2) {$\widetilde{\{z_j=\lambda=0\}}$};
            
            \node[above] at (0.75,1.8) {After Resolution};
        \end{scope}
    \end{tikzpicture}
    \caption{Local model of the resolution at a double point $p$ defined by $z_iz_j=\lambda s_1$. 
    }
    \label{fig:resolution_intersection}
    \end{figure}

\begin{rmk}\label{Remark:Modifiedform_SymplecticOrthogonal}
    Note that the modification to the symplectic form is done by adding the Poincar\'e dual of the base locus cycle which doesn't meet the codimension 2 strata of the singular fibre. Therefore it can be taken to be supported away from the codimension 2 strata in the blow up. As a result we have that the proper transforms of the codimension 1 strata $D_{ij}$ and $D_{ik}$ and $D_{jk}$ still intersect symplectically orthogonally with respect to the modified symplectic form as well. Furthermore, the symplectic form is in fact compatible with the $J_{sb}$, in a small neighbourhood of the codimension 2 strata thus satisfying the requirements of Defintion~\ref{Defn:AdmissibleKulikovModel}.
\end{rmk}
In the following lemma we identify the a  fiber of $\tilde{\pi}^{-1}(t)$ for $t\neq 0$ with the corresponding element of the pencil in $X$. 
\begin{lemma}\label{lmToricClass}
The restriction of  the Poincar\'e dual of a class $[\Sigma_{t_1,\dots, t_m}]$ to a generic fiber of $\tilde{\pi}$ coincides with the restriction of the  Poincar\'e dual of the class $\sum_{i=1}^m t_iD_i$ to a generic fibre of $\pi$ in $H^2(X;\mathbb{R})$. Here we identify the generic fiber of $\tilde{\pi}$ with the generic fiber of $\pi$ via the blow down map. 
\end{lemma}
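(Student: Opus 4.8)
The claim is a homological statement comparing the restriction of $\PD[\Sigma_{t_1,\dots,t_m}]$ on a generic fiber $\tilde\pi^{-1}(t)$ with the restriction of $\PD[\sum_i t_i D_i]$ on the generic fiber $\pi^{-1}(t) = V(s_0 + t s_1) \subset X$, under the identification by the blow-down map. The plan is to work one index at a time: it suffices to show that for each $i$, the restriction of $\PD[\tilde\Sigma_i]$ to a generic fiber of $\tilde\pi$ equals, via the blow-down identification, the restriction of $\PD[D_i]$ to the corresponding generic fiber of $\pi$. Summing with coefficients $t_i$ then gives the result.

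\textbf{Key steps.} First I would note that away from the finitely many singular points, $\pi_{sb}$ is a biholomorphism, so the blow-down restricts to an isomorphism of the generic fiber $\tilde\pi^{-1}(t) \cong \pi^{-1}(t) = V_t$ for $t \neq 0$; moreover $\tilde\Sigma_i$ is the proper transform of $\Sigma_i = \{x \in B_i\} \times \{[\lambda_0:\lambda_1]\} \subset E$, and since the small blow up only alters things over the singular points (which lie in the fiber over $\lambda_1 = 0$, i.e.\ $t = \infty$ in the $\lambda = \lambda_0/\lambda_1$ coordinate, equivalently over a special value away from generic $t$), the intersection $\tilde\Sigma_i \cap \tilde\pi^{-1}(t)$ maps isomorphically onto $\Sigma_i \cap \pi^{-1}(t)$ under the blow-down for generic $t$. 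Next, $\Sigma_i \cap \pi^{-1}(t)$: recall $\pi^{-1}(t)$ is the hypersurface $V_t = V(s_0 + t s_1)$ and $B_i \subset s_0^{-1}(0)$ is the component of the base locus $\{s_0 = s_1 = 0\}$ meeting $D_i$. On $B_i$ both $s_0$ and $s_1$ vanish, hence $B_i \subset V_t$ for every $t$; so $\Sigma_i \cap \pi^{-1}(t) \cong B_i$. Thus I must show that the cycle $B_i \subset V_t$ represents $\PD$ of $D_i|_{V_t}$ in $H^2(V_t;\R)$, equivalently that $[B_i] = [D_i \cap V_t]$ in $H_2(V_t;\R)$ — or at least that they have the same Poincaré dual in $H^2(V_t;\R)$.

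\textbf{Main obstacle and its resolution.} The crux is the identification $[B_i] = \PD(D_i|_{V_t})$ inside the generic fiber $V_t$, which is a K3 surface. I would argue this by adjunction/intersection theory on $V_t$: the divisor $D_i \cap V_t$ on $V_t$ is an anti-canonical curve on the rational surface $D_i$ cut by... more precisely, by adjunction $D_i|_{V_t}$ corresponds to the class $[D_i]|_{V_t} \in H^2(V_t;\R)$, and I claim $D_i \cap V_t = B_i$ as sets, or differs from $B_i$ by a class that vanishes in $H^2(V_t;\R)$. Indeed $D_i \cap V_t = D_i \cap V(s_0 + t s_1)$; since $s_0$ vanishes identically on $D_i$ (as $D_i$ is a component of $s_0^{-1}(0)$), this equals $D_i \cap V(s_1) = D_i \cap H$, which is an anti-canonical curve in the rational surface $D_i$. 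Meanwhile $B_i = D_i \cap s_1^{-1}(0) \cap s_0^{-1}(0)$-component; since $s_0 \equiv 0$ on $D_i$, the base locus component $B_i$ meeting $D_i$ is precisely a component of $D_i \cap H$. Here one must be careful that $D_i \cap H$ might a priori be reducible, but by the genericity/transversality hypothesis on $H$ (Lemma~\ref{Lemma:Singularpointsnearcentralfibre}) and ampleness of $-K_X|_{D_i}$, $B_i$ is the relevant component and the identification of classes in $H^2(V_t;\R)$ holds — the point being that both $[D_i \cap V_t]$ and $[B_i]$ restrict to the same thing because $D_i \cap V_t$ lies in the central-fiber-like locus, and the class is determined by the intersection pattern recorded in Lemma~\ref{lmIntersection}. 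I would finish by assembling: the blow-down identifies $\tilde\Sigma_i \cap \tilde\pi^{-1}(t) \cong B_i \subset V_t$, and $B_i$ is Poincaré dual in $V_t$ to $D_i|_{V_t}$; scaling by $t_i$ and summing over $i$ yields the stated equality in $H^2(X;\R)$ restricted to the generic fiber. The subtle point throughout is keeping track of which curves are the actual components cut out (vs.\ the whole scheme-theoretic intersection) and verifying that the discrepancy, if any, is homologically trivial on the K3 fiber; this is where the transversality assumptions from the preceding lemmas do the work.
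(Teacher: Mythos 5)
Your proposal is correct and follows essentially the same route as the paper: reduce to each component $\tilde{\Sigma}_i$, use that the small blow-up only modifies the degenerate fiber to identify $\tilde{\Sigma}_i\cap\tilde{\pi}^{-1}(t)$ with $\Sigma_i\cap\pi^{-1}(t)=B_i$, and then observe that $D_i\cap\pi^{-1}(t)=D_i\cap H=B_i$ because $s_0$ vanishes identically on $D_i$. Your argument is in fact slightly more explicit than the paper's (which compresses the last step into the remark $\Sigma_i=B_i\times\CP^1$), and your worry about reducibility of $D_i\cap H$ is harmless since the key identity $D_i\cap V_t=D_i\cap H$ already gives the class equality.
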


\begin{proof}
The claim follows if we prove it for each component $\tilde{\Sigma}_i$. Note that the restriction of the Poincar\'e dual of $\tilde{\Sigma}_i$ to the generic fiber $\tilde{\pi}^{-1}(t)$ represents the class $\PD[ \tilde{\Sigma}_i \cap \tilde{\pi}^{-1}(t)]$ where $\PD$ stands for Poincar\'e dual in the cohomology of the fibre $\tilde{\pi}^{-1}(t)$ and $\cap$ is the homological intersection. As the small blow up operation is local and is only performed on the degenerate fibre, we have that the homological intersection in the small blow up is the same as the intersection in the original manifold i.e $\PD[ \tilde{\Sigma}_i \cap \tilde{\pi}^{-1}(t)] = \PD[\Sigma_i \cap \pi^{-1}({t})]$. But $\PD[\Sigma_i \cap \pi^{-1}({t})] = \PD[D_i \cap \pi^{-1}(t)]$. Indeed, $\Sigma_i= B_i \times \CP^1$ where $B_i$ was the component of the base locus meeting $D_i$. 
\end{proof}

\begin{lemma}\label{lmArbitraryToricClass}
    Fix a toric symplectic form $\omega_0$ on $X$. Then we can find a nearby symplectic form $\omega$, an ordered $m$-tuple $\vec{t}$ and a Poincare dual $\alpha_{\vec{t}}$ to $\Sigma_{\vec{t}}$ such that $\tilde{\omega}_{\vec{t}}:=\pi^*\omega+\alpha_{\vec{t}}$ is a symplectic form on $E_{sb}$ taming $J$ and such that the induced form on a generic fiber is cohomologous to the one induced by restriction of $\omega_0.$ Moreover, we can make $\omega$ arbitarily close to $\omega_0$ and $\vec{t}$ arbitrarily small.
\end{lemma}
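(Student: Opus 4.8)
The plan is to combine Lemma~\ref{lmToricClass} (which tells us the cohomology class induced on a generic fiber by $\alpha_{\vec t}$) with Lemma~\ref{Lemma:Family_of_Form_small_blowup} (which gives the taming for $\vec t$ small) and a linear-algebra/openness argument matching cohomology classes. First I would record what the target cohomology class on a generic fiber $V$ must be: the restriction $[\omega_0|_V]\in H^2(V;\R)$. Since $V$ is an anticanonical K3, $H^2(V;\R)$ has large rank, but the subspace we can hit is controlled: by Lemma~\ref{lmToricClass} the fiber class of $\pi^*\omega+\alpha_{\vec t}$ is $[\omega|_V]+\sum_i t_i[D_i\cap V]$, where the classes $[D_i\cap V]=\mathrm{PD}[F_i]$ are (images of) the toric boundary classes restricted to $V$. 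So the set of achievable fiber classes, as $\omega$ ranges over toric forms near $\omega_0$ and $\vec t$ over small ordered tuples, is an open neighborhood in $[\omega_0|_V] + \mathrm{span}_\R\{[D_i\cap V]\}$ intersected with the appropriate cone.

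The key step is to choose $\vec t$ and the perturbed toric form $\omega$ so that
\[
[\omega|_V] + \sum_{i=1}^m t_i\,[D_i\cap V] \;=\; [\omega_0|_V]\quad\text{in }H^2(V;\R).
\]
Here is the point: a toric K\"ahler form on $X$ has cohomology class $[\omega]=\sum_i a_i\,\mathrm{PD}[D_i]$ determined by the facet parameters $a_i$ of its moment polytope, and the restriction map $H^2(X;\R)\to H^2(V;\R)$ is injective on this toric subspace (by Lefschetz-type/ampleness considerations — $V$ is an ample anticanonical hypersurface, so restriction is injective on $H^2$). Thus, working inside the finite-dimensional space spanned by the $[D_i]$, I need: $[\omega]+\sum t_i[D_i]=[\omega_0]$ in $H^2(X;\R)$, i.e. $[\omega] = [\omega_0]-\sum_i t_i\,\mathrm{PD}[D_i]$. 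Now pick any small ordered tuple $\vec t=(t_1>\dots>t_m>0)$ with all $t_i$ sufficiently small; then $[\omega_0]-\sum t_i\,\mathrm{PD}[D_i]$ is still in the (open) K\"ahler cone of $X$ and is still a \emph{toric} class, hence represented by a toric K\"ahler form $\omega$, with $\omega$ arbitrarily close to $\omega_0$ as $\vec t\to 0$. With this $\omega$ and $\vec t$, Lemma~\ref{lmToricClass} gives that the fiber class of $\tilde\omega_{\vec t}:=\pi^*\omega+\alpha_{\vec t}$ equals $[\omega_0|_V]$ exactly. Finally, shrinking all $t_i$ further if necessary (the taming threshold $\epsilon$ of Lemma~\ref{Lemma:Family_of_Form_small_blowup} depends only on the data), Lemma~\ref{lmIntersection} guarantees $[\Sigma_{\vec t}]\cdot[C]=t_i-t_j>0$ for every exceptional curve $C$, so Lemma~\ref{Lemma:Family_of_Form_small_blowup} applies and $\tilde\omega_{\vec t}$ is symplectic and tames $J_{sb}$.

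The main obstacle I anticipate is the injectivity claim for the restriction map on the toric subspace of $H^2(X;\R)$ — i.e.\ making sure that matching the fiber class \emph{on $V$} really can be arranged by a genuine toric class adjustment on $X$, rather than merely up to the kernel of restriction. For a smooth toric Fano threefold and a smooth ample anticanonical surface $V$, the weak Lefschetz theorem gives that $H^2(X;\R)\to H^2(V;\R)$ is injective, which is exactly what is needed; so this is really a matter of citing the right Lefschetz statement and checking the toric classes $\mathrm{PD}[D_i]$ span the relevant space. A secondary bookkeeping point is to make sure "ordered $m$-tuple" with the strict inequalities $t_1>\dots>t_m$ is compatible with hitting the prescribed class: since we only need \emph{some} such tuple with all entries small, and the class we must subtract is $\sum t_i\mathrm{PD}[D_i]$ for \emph{our} choice of $t_i$, there is no constraint clash — we first fix a small admissible $\vec t$, and only then solve for $\omega$.
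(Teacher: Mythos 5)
Your overall strategy coincides with the paper's: decompose $[\omega_0]=\sum_i\lambda_i\,\PD[D_i]$, take for $\omega$ the toric form whose class is $[\omega_0]-\sum_i t_i\,\PD[D_i]$ (the paper literally takes the toric form $\omega_{\vec\lambda-\vec t}$ with shifted facet constants), use Lemma~\ref{lmToricClass} so that the fiber class of $\pi^*\omega+\alpha_{\vec t}$ equals $[\omega_0|_V]$, and get positivity on exceptional curves from Lemma~\ref{lmIntersection}. One side remark: the point you single out as the ``main obstacle'' is not one. You never need injectivity of the restriction $H^2(X;\R)\to H^2(V;\R)$: you arrange equality of classes upstairs and then restrict, so weak Lefschetz would only be relevant if you wanted to show this choice of $\omega$ is forced, which the lemma does not ask for.

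The genuine gap is in the taming step, where you quote Lemma~\ref{Lemma:Family_of_Form_small_blowup} as a black box. That lemma fixes the base form $\omega$ and the cycle $D$ and produces a threshold $\epsilon$ (which does depend on $\omega$, through nondegeneracy of $\pi_{sb}^*\omega$ away from the exceptional locus and compactness) such that $\pi_{sb}^*\omega+t\alpha_D$ tames for $t\in(0,\epsilon)$; it scales the cycle while keeping $\omega$ fixed. In your argument $\omega$ is chosen \emph{after} $\vec t$ so as to hit the class $[\omega_0]-\sum_i t_i\PD[D_i]$, hence the form to which the threshold is attached itself varies with $\vec t$; the parenthetical ``$\epsilon$ depends only on the data'' is precisely what needs proof, and is false if read as independence from $\omega$. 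The paper closes this by reopening the construction rather than citing the lemma: it fixes once and for all Poincar\'e-dual representatives $\alpha_i$ of the $\tilde{\Sigma}_i$ (one per facet, independent of $\vec t$) which coincide with $\pm\omega_{U_j}$ near each exceptional curve, so that near an exceptional curve over a point of $D_{ij}$ the sum $\sum_i t_i\alpha_i$ is $(t_i-t_j)\omega_{U_j}$, taming there by the greedy ordering; and it proves a uniform estimate $\omega_{\vec\lambda'}(v,Jv)>\epsilon$ on the complement of those neighborhoods, valid for all $\vec\lambda'$ in a fixed neighborhood $W$ of $\vec\lambda$ (background metric plus compactness). With that uniform $\epsilon$ one may then take $\vec t$ small with $\vec\lambda-\vec t\in W$. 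Your plan needs this uniformity-over-a-neighborhood argument (or an equivalent way of breaking the circularity between $\epsilon$, $\omega$ and $\vec t$); the rest matches the paper's proof.
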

\begin{proof}
    Write $\omega_0=\omega_{\vec{\lambda}}=\sum \lambda_i\beta_i$ where $\beta_i$ is Poincar\'e dual to $D_i$. We want to show that we can pick $\vec{t}=(t_1\dots t_N)$ and a Poincar\'e dual $\alpha$ to $\Sigma_{\vec{t}}$ such that $\omega_1:=\tilde{\pi}^*\omega_{\vec{\lambda}-\vec{t}}+\alpha$ tames $J_{sb}$. 

    For this we revisit the construction in Lemma \ref{Lemma:Family_of_Form_small_blowup}. Fix open neighborhoods $U_j$ of the exceptional spheres $C_j$ which are biholomorphic to open neighborhoods of the zero section in $\mathcal{O}(-1)\oplus\mathcal{O}(-1)$, and let $\omega_{U_i}$ be a Kahler form on $\mathcal{O}(-1)\oplus\mathcal{O}(-1)$ for which the zero section has unit area. For each face $D_i$ let $\alpha_i$ be a Poincar\'e dual to $\tilde{\Sigma}_i$ which coincides with $\pm\omega_{U_j}$ on some smaller neighborhood $ V_j$ of $C_j$ whose closure is in $U_i$. The sign is determined by the intersection number. This can be assumed as shown in the proof of Lemma \ref{Lemma:Family_of_Form_small_blowup}.  

    There is an open neighborhood $W$ of $\vec{\lambda}$ so that $\omega_{\vec{\lambda'}}$ is symplectic for all $\lambda'\in W$. Shrinking $W$ slightly and fixing a background metric on $E_{sb}$ we can find an $\epsilon>0$ such that on $E_{sb}\setminus \cup V_j$, we have that if $v$ is any unit tangent vector then $\omega_{\lambda'}(v,Jv)>\epsilon$ for all $\lambda'\in W.$  Then for $\vec{t}$ small enough with respect to $\epsilon$ and such that $\vec{\lambda}-\vec{t}\in W$, we have that $\tilde{\pi}^*\omega_{\vec{\lambda}-\vec{t}}+\sum_{i}t_i\alpha_i$ tames $J$. By Lemma~\ref{lmToricClass}, we have the cohomology class of $\tilde{\pi}^*\omega_{\vec{\lambda}-\vec{t}}+\sum_{i}t_i\alpha_i$ is the same as $\om_0$ thus proving the claim. 
\end{proof}

\subsection{Holomorphic volume form on the resolved manifold}\label{Subsection:Holformonsmallblowup}
Consider the restriction $E_{sb}|_{\C}$ of the family $E_{sb}$ to the complement of $\infty$. That is, consider the small blow up of $E\cap (X\times\C)$. Let $\Omega_X$ be a meromorphic volume form on $X$ with poles of order $1$ along the toric boundary. Then the volume form $$\Omega_{tot}:=\frac{s_0d\lambda\wedge \Omega_X}{\lambda(s_0+\lambda s_1)}$$ has a pole of order $1$ along $E$. We then define $\Omega_E=res(\Omega_{tot})$ at the smooth points of $E$, where $res$ denotes the Poincar\'e residue. $\Omega_E$ is a non-vanishing form wherever it is defined. The pullback $\Omega_{E_{sb}}$ is defined and non-vanishing on the complement of the exceptional locus which is of codimension $2$. By Hartogs' property $\Omega_{E_{sb}}$ extends to $E_{sb}|_{\C}$ and is non-vanishing everywhere. Thus the restriction of $E_{sb}$ has trivial canonical bundle. Since the components of the central fiber are rational and the intersection of any two is either empty or a rational curve, the following Lemma follows readily. 

\begin{lemma} \label{E_{sb}isKulikov}
${E_{sb}}|_{\C}$ is a Kulikov model of type III.  
\end{lemma}
\qed
 
\subsection{Almost toric fibration on the small blow-up}
By Lemma~\ref{E_{sb}isKulikov} ${E_{sb}}|_{\C}$ is a Kulikov model of type III.  In this section, we show that it is in fact a \emph{symplectic} Kulikov model of type III by producing an almost toric fibration on the central fibre of the degeneration.
 To recap, we consider $s_0^{-1}(0) = \bigcup_i D_i$, where $D_i$ is a 4-dimensional toric variety whose toric boundary is precisely the divisor $\bigcup_{j\neq i} {D}_{ij}$ for ${D}_{ij}:= {D}_{i} \cap {D}_{j}$. 
We have constructed a family of symplectic forms $\tilde{\om}_t$ for appropriate $t=(t_1,t_2,\cdots,t_n)$ on the small blowup $E_{sb}$. We denote the new holomorphic fibration by  $\tilde{\pi}:E_{sb} \rightarrow \CP^1$ with $\tilde{\pi}^{-1}(0) = \bigcup \Tilde{D}_i$ where each $\Tilde{D}_i$ the proper transform under the small blow up operation of ${D}_i$ at all singular points on $E$. Write $\tilde{\omega}_i=\tilde{\omega}_t|_{\Tilde{D}_i}$. We now turn to construct an almost toric fibration on $(\Tilde{D}_i,\tilde{\omega}_i)$. Before doing that, we briefly recall the notion of an almost toric blow up. We refer the reader to Section 5.4 in \cite{Symington} and Section 9.1 in \cite{Evans_2023} for more details. 
\subsubsection{Almost toric blow-up}\label{Sec:ATFblowup} 

\begin{defn}[Almost toric blow-up]\label{Defn:ATFblowup}
Let $(M,\om)$ be a symplectic 4-manifold with an almost toric fibration $\mu: M \rightarrow \R^2$ such that the 1-stratum of
the image $B:=\mu(M)$ is non-empty. Then the almost toric blow up/nodal blow up gives us a symplectic manifold $(\tilde{M}, \tilde{\om})$ with an almost toric fibration whose base diagram is constructed as follows:
\begin{enumerate}
    \item Choose a base diagram
for $B$ such that for some $a, \epsilon > 0 $ the set $\{(p_1, p_2)~|~ p_1 > -\epsilon, p_2 \geq 0, p_1 + p_2 < a + \epsilon\}$ with
the boundary components marked by heavy lines represents a fibered neighborhood of a
point in the 1-stratum. Remove the triangle with vertices $(0, 0), (a, 0), (0, a)$. 
 \item Connect the two resulting vertices of the base with a pair of dotted lines (As in fig~\ref{fig:ATFblowup})
\end{enumerate}

\end{defn}
{
 \tikzset{every picture/.style={line width=0.75pt}} 
 \begin{figure}[H]
    \centering

\begin{tikzpicture}[scale=1]

\draw[thick] (0,0) -- (3,0) -- (0,3) -- cycle;

\draw[->, thick] (3.5,1.5) -- (6,1.5);
\node at (4.75,2.0) {blow-up};

\coordinate (A) at (6.5,0);
\coordinate (B) at (9.5,0);
\coordinate (C) at (6.5,3);

\draw[thick] (A) -- (7.75,0);
\draw[thick] (8.75,0) -- (B);

\draw[thick] (A) -- (C);
\draw[thick] (B) -- (C);

\coordinate (D) at (7.75,0);
\coordinate (E) at (8.75,0);
\coordinate (F) at (7.75,1);

\draw[dashed, thick] (D) -- (F);
\draw[dashed, thick] (E) -- (F);
\node[font=\Large] at (F) {$\times$};
\end{tikzpicture}
\caption{Almost toric blow-up }
\label{fig:ATFblowup}
 \end{figure}
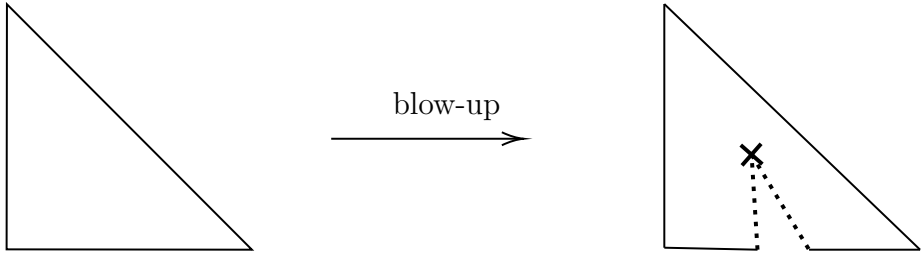
 }

\subsubsection{Construction of Almost toric fibration}

We recall the following useful claims.

\begin{thm}(Theorem 6.3 in \cite{Guillemin})\label{Theorem:Cohomologytoricform}
Let $(M,\om,\mu, \Delta)$ be a symplectic Toric manifold with image of momentum map $\mu: M \rightarrow \R^n$ being the Delzant polytope $\Delta$. Let $u_i$ denote the primitive normal vector to the boundary facets and let $\lambda_i$ denote the support constants defining the boundary hyperplanes. That is,   $\Delta$ is described by the  inequalities $\langle x,u_i\rangle \leq \lambda_i$ for $i \in \{1, \cdots, d\}$ where $d$ is the number of boundary facets. In particular, the facet $F_i$ is given by the relation $\langle x,u_i\rangle = \lambda_i$.  Let $C_i=\mu^{-1}(F_i)$ and $[C_i]$ their cohomology classes. Then the cohomology class of $\om$ is given by 
\begin{equation*}
    \frac{1}{2 \pi} [\om] = -\sum_{i=1}^d \lambda_i [C_i]
\end{equation*}
\end{thm}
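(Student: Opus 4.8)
The statement to prove is Theorem 6.3 of \cite{Guillemin}, which computes the cohomology class of the symplectic form on a symplectic toric manifold in terms of the support constants of its moment polytope. Since this is a classical result, my goal is to reconstruct a clean proof rather than invent a new one. The plan is to build the form whose class we want, namely $-\sum_i\lambda_i[C_i]$, as an explicit closed $2$-form and compare it with $\tfrac1{2\pi}[\om]$ on generators of $H^2(M)$. I would work via the standard presentation of $H^2$ of a toric manifold: the classes $[C_i]$ of the divisors generate, subject to the linear relations coming from the characters of the torus. Concretely, for each facet $F_i$ with primitive inward normal $u_i$ one has a relation $\sum_i\langle m,u_i\rangle[C_i]=0$ for every $m$ in the character lattice, and these are the only relations. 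So it suffices to test the asserted equality against a dual basis, e.g. against the classes of the invariant $2$-spheres $S_e$ corresponding to edges $e$ of $\Delta$.

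\textbf{Key steps.} First, for an edge $e$ of the polytope joining vertices $v,v'$, the preimage $S_e=\mu^{-1}(e)$ is an embedded symplectic $S^2$, and $\int_{S_e}\om$ equals $2\pi$ times the lattice length of $e$; this is the elementary $\mathrm{SU}(2)$/$\CP^1$ computation in the local Delzant chart around $e$, where $\mu$ restricted to $S_e$ is a standard height function with image of affine length $\ell(e)$. Second, on the homology side, $[S_e]\cdot[C_i]$ is the number of facets through... more precisely it equals $1$ if $F_i$ is one of the two facets containing $e$ transversally and is computed by the self-intersection data otherwise; the key identity, which is pure polytope combinatorics (Delzant condition at the vertices $v,v'$), is that $-\sum_i\lambda_i\,([S_e]\cdot[C_i])=\ell(e)$. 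I would derive this by choosing, at a vertex $v$ of $e$, the $\mathbb{Z}$-basis of normals $u_{i_1},\dots,u_{i_n}$ of the facets meeting at $v$ (Delzant: this is a lattice basis), expressing the edge direction as $\pm$ the dual basis vector, and tracking how the support constants enter the vertex coordinates $x_v$ via $\langle x_v,u_{i_k}\rangle=\lambda_{i_k}$; then $\ell(e)=\langle x_{v'}-x_v,\text{(edge direction)}\rangle$ unwinds into the stated alternating sum of $\lambda_i$'s. Pairing the two computations gives $\int_{S_e}\tfrac1{2\pi}\om=\ell(e)=-\sum_i\lambda_i([S_e]\cdot[C_i])=\langle -\sum_i\lambda_i[C_i],[S_e]\rangle$ for every edge $e$, and since the $[S_e]$ span $H_2(M;\mathbb{R})$ (they are dual to the $[C_i]$ modulo the torus-character relations, or one invokes that toric manifolds have $H_2$ generated by invariant spheres), the two cohomology classes agree.

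\textbf{Alternative / streamlining.} An even cleaner route, which I would probably present, is to exhibit a specific $2$-form representing $-\sum_i\lambda_i[C_i]$ and show directly it is cohomologous to $\tfrac1{2\pi}\om$. On the open dense torus orbit $M^\circ\cong(\C^\ast)^n$, the symplectic form is exact, $\om=d\beta$ with $\beta$ built from the moment map; near each facet divisor $C_i$ one has a circle action with a normalized Hamiltonian, and patching the local primitives with a partition of unity subordinate to tubular neighborhoods of the $C_i$ produces the comparison, the obstruction to globalizing being exactly measured by the periods $\lambda_i$ against the $[C_i]$. Either way the computation is local on the polytope.

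\textbf{Main obstacle.} The only genuinely substantive point is the combinatorial identity $-\sum_i\lambda_i([S_e]\cdot[C_i])=\ell(e)$ together with the correct bookkeeping of intersection numbers $[S_e]\cdot[C_i]$ when $F_i$ is \emph{not} one of the two facets spanning $e$ (these contribute via the normal bundle of $S_e$, i.e.\ via the ``bending'' of the polytope, and one must use the Delzant condition at \emph{both} endpoints $v,v'$). Everything else — the $\CP^1$ area computation, the generation of $H_2$ by invariant spheres, the sign conventions tying $u_i$ inward/outward to $\langle x,u_i\rangle\le\lambda_i$ — is routine. I would budget the bulk of the write-up to pinning down these signs and the intersection-number combinatorics carefully, since an off-by-sign there is the classic pitfall in this formula.
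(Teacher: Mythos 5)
The paper does not prove this statement at all: it is quoted as Theorem 6.3 of Guillemin's book and used as a black box, so there is no internal proof to compare yours against. On its own terms, your outline is the standard argument and it is sound: the invariant spheres $S_e=\mu^{-1}(e)$ do span $H_2(M;\R)$, the local computation $\int_{S_e}\om=2\pi\,\ell(e)$ (lattice length of $e$) is correct, and your ``key identity'' $-\sum_i\lambda_i\,([S_e]\cdot[C_i])=\ell(e)$ is exactly what falls out of the wall relation $u_j+u_{j'}+\sum_k a_k u_{i_k}=0$, where $F_{i_1},\dots,F_{i_{n-1}}$ are the facets containing $e$, $F_j,F_{j'}$ are the facets cutting off its endpoints $v,v'$, $a_k=[S_e]\cdot[C_{i_k}]$, and all other intersection numbers vanish: pairing this relation with $x_v$, using $\langle x_v,u_j\rangle=\lambda_j$, $\langle x_{v'},u_{j'}\rangle=\lambda_{j'}$ and the Delzant duality at $v'$ between primitive edge directions and facet normals gives the identity in two lines, so the obstacle you budget for does close without surprises. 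One concrete caveat you would hit when ``pinning down the signs'': with the half-space convention exactly as written in the statement ($\langle x,u_i\rangle\le\lambda_i$, i.e.\ outward normals), the displayed formula fails already for $\CP^1$ with moment image $[0,a]$ (the right-hand side integrates to $-a$ rather than $a$); Guillemin's convention is $\langle x,u_i\rangle\ge\lambda_i$ with inward normals, and your derivation should be carried out in that convention (or the overall sign flipped), so your $\CP^1$ and edge tests are not just routine checks but the place where this discrepancy in the quoted statement surfaces.
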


\begin{thm}(\cite{Salamon})\label{Thm:Cohomologousformsarediffeo}
   Let $M= \CP^2\# k\overline{\CP^2}$ or $S^2 \times S^2$, if $\om$ and $\om'$ are cohomologous symplectic forms on $M$, then there is a diffeomorphism $\varphi$ that acts trivially on homology such that $\varphi^*\om= \om'$. 
\end{thm}

\begin{lemma}\label{lemma:Toricopenness}

Consider the symplectic toric manifold  $( M_{\Delta},\omega,\mu,\Delta)$ and let $C_i$ be the boundary of the momentum map as in  Theorem \ref{Theorem:Cohomologytoricform}. Then there exists a small neighbourhood $U \subset H^2(M_{\Delta},\R)$ of $[\om]$ such that every class $A \in U$ has a symplectic toric representative $\om_A$ whose toric boundary is given by $\bigcup_i C_i$.

\end{lemma}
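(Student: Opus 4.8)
\textbf{Proof proposal for Lemma \ref{lemma:Toricopenness}.}

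The plan is to use the standard correspondence between symplectic toric manifolds with a fixed moment polytope combinatorial type and the cohomology classes of their symplectic forms, via Theorem \ref{Theorem:Cohomologytoricform}, and to observe that perturbing the support constants $\lambda_i$ gives a Delzant polytope of the same combinatorial type, hence a symplectic toric manifold diffeomorphic (indeed torically equivariantly diffeomorphic) to $M$. Concretely, write the given polytope as $\Delta = \{x : \langle x, u_i\rangle \le \lambda_i\}$ with primitive inward (or outward) normals $u_i$. The Delzant condition — that at each vertex the incident normals form a $\Z$-basis of the lattice — is an open condition on the vector $\vec\lambda = (\lambda_1,\dots,\lambda_d)$: the set of $\vec\lambda'$ for which $\{x : \langle x, u_i\rangle \le \lambda_i'\}$ is a simple polytope with the same face lattice and the same edge directions is open, and the lattice (Delzant) condition at each vertex depends only on the normals $u_i$, which are held fixed. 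So there is a neighborhood $\mathcal{N}$ of $\vec\lambda$ in $\R^d$ such that every $\vec\lambda' \in \mathcal{N}$ defines a Delzant polytope $\Delta_{\vec\lambda'}$ with the same normal fan as $\Delta$.

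Next I would invoke the Delzant construction: for each $\vec\lambda' \in \mathcal{N}$ we obtain a symplectic toric manifold $(M_{\vec\lambda'}, \omega_{\vec\lambda'}, \mu_{\vec\lambda'}, \Delta_{\vec\lambda'})$. Since the fan is constant over $\mathcal{N}$, all the $M_{\vec\lambda'}$ are equivariantly symplectomorphic as smooth toric varieties to the same underlying toric manifold, which is $M$; only the symplectic form (hence the polytope, hence the sizes) varies. Under this identification the preimage of the facet $F_i$ is the fixed toric divisor $C_i$, independent of $\vec\lambda'$. By Theorem \ref{Theorem:Cohomologytoricform}, $\tfrac{1}{2\pi}[\omega_{\vec\lambda'}] = -\sum_i \lambda_i' [C_i]$. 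Thus the map $\vec\lambda' \mapsto [\omega_{\vec\lambda'}]$ is an affine map $\R^d \to H^2(M;\R)$ whose image is the affine subspace spanned (up to the constant and sign conventions) by the $[C_i]$; as $\vec\lambda'$ ranges over the open set $\mathcal{N}$ the classes $[\omega_{\vec\lambda'}]$ fill out an open neighborhood of $[\omega]$ \emph{within that affine subspace}. Since the classes $[C_i]$ of the toric divisors span $H^2(M;\R)$ (the toric divisors generate $H^2$ of a smooth projective toric variety, and for a smooth toric Fano in our setting the relevant classes span the real cohomology), the affine span of the $[C_i]$ is all of $H^2(M;\R)$, so we in fact get an open neighborhood $U \subset H^2(M;\R)$ of $[\omega]$, each class of which is represented by $\omega_A := \omega_{\vec\lambda'}$ for a suitable $\vec\lambda' \in \mathcal{N}$, with toric boundary $\bigcup_i C_i$ as claimed.

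The one genuinely substantive point — and the place I would be most careful — is the surjectivity claim: that the $[C_i]$ span $H^2(M;\R)$, equivalently that the affine map $\vec\lambda' \mapsto [\omega_{\vec\lambda'}]$ has full-rank linear part. For a smooth complete toric variety this is classical (the divisor sequence $0 \to M_{\Z} \to \Z^d \to \operatorname{Pic} \to 0$ shows the $[D_i]$ generate the Picard group, hence $H^2$ since $H^2 = \operatorname{Pic}\otimes\R$ for smooth projective toric varieties), so it reduces to quoting this fact; alternatively, one restricts to the affine subspace spanned by the $[C_i]$ and states the conclusion there, which is all that is actually used in the applications (Lemma \ref{lmArbitraryToricClass}). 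A secondary routine point is checking that the perturbed polytope really is combinatorially equivalent with the same edge vectors — this follows because a simple polytope is locally rigid in the sense that small perturbations of the support constants that keep all vertices nondegenerate preserve the face lattice, and vertices vary smoothly and stay nondegenerate on a neighborhood of $\vec\lambda$.
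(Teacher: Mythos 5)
Your proposal is correct and follows essentially the same route as the paper's proof: perturb the support constants $\lambda_i$ (the Delzant condition being open), run the Delzant construction over the fixed fan to identify the resulting toric manifolds with $M$ so that the facets still correspond to the divisors $C_i$, and read off the cohomology class via Theorem \ref{Theorem:Cohomologytoricform}. The only difference is that you make explicit the spanning of $H^2(M;\R)$ by the classes $[C_i]$, a classical fact the paper leaves implicit, which is a welcome clarification rather than a divergence.
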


\begin{proof}

By Theorem~\ref{Theorem:Cohomologytoricform}, the cohomology class of $\om$ is given by $ \frac{1}{2 \pi} [\om] = -\sum_{i=1}^d \lambda_i C_i$ for support constants $\lambda_i$. Perturb the cohomology class by perturbing the support constants $\lambda_i$ defining the facets $F_i$ slightly by $h_i[C_i]$ ($h_i \in \R$). For small enough perturbations of $\lambda_i$ the  resulting polytope still satisfies the Delzant conditions. Call this new perturbed polytope $\Delta'$ with boundary divisors $C_i'$ and the symplectic manifold associated to it by $(M_{\Delta'},\omega_{\Delta'})$. The fan structures associated to $\Delta$ and $\Delta'$ respectively are isomorphic and hence the complex projective varieties $X_{\Delta}$ and $X_{\Delta'}$ associated to $\Delta$ and $\Delta'$  respectively are equal i.e  $X_{\Delta} =X_{\Delta'}$ and have $\bigcup_i C_i$ as their torus invariant codimension 1 divisors.  We also have the following equivariant orientation preserving diffeomorphisms
$\phi_{\Delta}: M_{\Delta}\rightarrow X_{\Delta},$ and $\phi_{\Delta'}: M_{\Delta'} \rightarrow (X_{\Delta'}= X_{\Delta})$ that send the toric strata in $M_{\Delta}$ and $M_{\Delta'}$ to the toric strata in $X_{\Delta}$. Thus  $\om':= (\phi_{\Delta}^{-1})^* \circ \phi_{\Delta'}^*\om_{\Delta'}$ is a toric symplectic form (for the action pulled back from $M_{\Delta'}$ via $\phi_{\Delta'}^{-1}\circ \phi_{\Delta}$) in the cohomology class $[\om]+ h_i[C_i]$ and the boundary toric divisor for the pulled back action is $\phi_{\Delta}^{-1}\circ \phi_{\Delta'}(C'_i)$ which is equal to $C_i$ as required. 
By suitable perturbations of the support constants defining $C_i$ we can obtain such a toric symplectic form in any class in a small enough neighbourhood of $[\om]$.  
    
\end{proof}
    
\begin{rmk}
Note that Lemma 4.14 can also be proved more easily by performing a torus equivariant symplectic inflation (\cite[Corollary 1.4]{CJP} along the divisors $C_i$ to achieve nearby cohomology classes in a small neighbourhood $U$.
\end{rmk}
We turn to construct an almost toric fibration on $(\tilde{D}_i,\tilde{\om}_i,\cup \tilde{D}_{ij})$. 

\begin{lemma}\label{Lemma:ConstructionofATF}
Abbreviate $t=\vec{t}$. Let $\alpha_t$ and $\omega$ be as in Lemma \ref{lmArbitraryToricClass}, and let $\tilde{\omega}_t:=\pi^*\omega+\alpha_t$.
There is an $\epsilon>0$ such that for $|t|<\epsilon$ the symplectic manifold $(\tilde{D}_i, \tilde{\omega}_t|_{\tilde{D}_{i}})$ admits an almost toric fibration $\tilde{\mu}^i_t: (\tilde{D}_i,\tilde{\omega}_t|_{\tilde{D}_i}) \rightarrow \tilde{B}_i$ with base $\tilde{B}_i$ such that 
$(\mu^i_t)^{-1}(\partial {\tilde{B}}_i) = \bigcup_j \tilde{D}_{ij}$. Moreover, for fixed $\omega_0$, $\epsilon$ can be taken fixed for some open neighborhood $U$ of $\omega_0$
in the space of symplectic forms.
\end{lemma}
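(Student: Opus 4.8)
The strategy is to reduce the construction of the ATF on the proper transform $\tilde D_i$ to the toric picture on $D_i$ via the almost toric blow-up of Definition~\ref{Defn:ATFblowup}. Recall $D_i$ is a smooth toric Fano surface and the restriction $\omega_0|_{D_i}$ is a toric K\"ahler form, so $(D_i,\omega_0|_{D_i})$ comes with a Delzant moment polytope $\Delta_i$ whose facets $F_{ij}$ correspond to the double curves $D_{ij}$. The proper transform $\tilde D_i$ is obtained from $D_i$ by a sequence of complex blow-ups at the points $D_i\cap H\cap D_{ij}$ for those $j$ at which, in the greedy ordering, the singular point got resolved \emph{inside} $D_i$; call this set of indices $J_i$, and say the point $p_{ij}\in D_{ij}$ for $j\in J_i$. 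Each such point lies on the toric boundary $D_{ij}=\mu_{D_i}^{-1}(F_{ij})$ but is a generic (non-toric-fixed) point of it, which is exactly the configuration handled by the almost toric blow-up: performing a nodal blow-up of $(D_i,\omega_0|_{D_i})$ at (a symplectically standard ball around) $p_{ij}$ produces an ATF whose base is $\Delta_i$ with a small triangle of size $a_{ij}$ removed from the facet $F_{ij}$ and replaced by a nodal vertex with an invisible cut (Figure~\ref{fig:ATFblowup}). Doing this at all $p_{ij}$, $j\in J_i$, yields an ATF $\tilde\mu^i$ on a symplectic manifold diffeomorphic to $\tilde D_i$ with boundary cycle $\bigcup_j\tilde D_{ij}$, and whose cohomology class is $[\omega_0|_{D_i}]-\sum_{j\in J_i}a_{ij}\,e_{ij}$, where $e_{ij}$ is the class of the exceptional sphere over $p_{ij}$.

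The remaining point is to match cohomology classes and symplectic forms. First I would compute, using Lemma~\ref{lmToricClass} (more precisely the computation $[\Sigma_{\vec t}]\cdot[C]=t_i-t_j$ from Lemma~\ref{lmIntersection} together with Remark after Lemma~\ref{Lemma:Family_of_Form_small_blowup}), that $\tilde\omega_t|_{\tilde D_i}$ has exceptional-sphere areas $[\tilde\omega_t]\cdot[e_{ij}]=t_i-t_j$ for $j\in J_i$, and that the restriction of $[\tilde\omega_t]$ to the complement of the exceptional curves agrees with $\pi^*\omega|_{\tilde D_i}$, which by Lemma~\ref{lmToricClass} has the same class on a fiber as the toric class $\sum_k\lambda_k\beta_k$ determined by $\omega$. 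Hence $[\tilde\omega_t|_{\tilde D_i}]$ is exactly the class of an almost toric blow-up of $(D_i,\omega|_{D_i})$ with sizes $a_{ij}=t_i-t_j>0$ (positive by the strict decrease of the $t_i$). By Lemma~\ref{lemma:Toricopenness}, for $\omega$ close enough to $\omega_0$ the form $\omega|_{D_i}$ is still toric with the same boundary divisors, so the almost toric blow-up construction applies verbatim and produces an ATF on $\tilde D_i$ in the correct cohomology class. Finally, to upgrade "same cohomology class" to "$\tilde\omega_t|_{\tilde D_i}$ itself is the ATF form", I invoke the Moser/uniqueness statement for symplectic forms on rational surfaces in a fixed deformation class with fixed cohomology class (the components $\tilde D_i$ are rational, where the symplectic cone is determined by cohomology and such forms are unique up to diffeomorphism by McDuff's work); pulling back $\tilde\mu^i$ by the resulting diffeomorphism gives $\tilde\mu^i_t$ with $(\tilde\mu^i_t)^{-1}(\partial\tilde B_i)=\bigcup_j\tilde D_{ij}$. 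The uniformity of $\epsilon$ over a neighborhood $U$ of $\omega_0$ follows because all the inputs — the Delzant perturbation bound in Lemma~\ref{lemma:Toricopenness}, the taming bound $\epsilon$ in Lemma~\ref{lmArbitraryToricClass}, and the size of the standard balls around the $p_{ij}$ — vary continuously and can be chosen uniformly on a compact neighborhood.

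The main obstacle I expect is the last step: passing from equality of cohomology classes to an actual symplectomorphism identifying $\tilde\omega_t|_{\tilde D_i}$ with the explicit almost-toric-blow-up form. This needs that on the rational surface $\tilde D_i$ the symplectic form is determined up to symplectomorphism by its cohomology class within its deformation class, and that the blow-up form and $\tilde\omega_t|_{\tilde D_i}$ lie in the same deformation class — the latter requiring care because $\tilde\omega_t$ was built by the global modification $\pi^*\omega+\alpha_{\vec t}$ of Lemma~\ref{lmArbitraryToricClass} rather than by a local symplectic blow-up. One way around this is to note that both forms tame the \emph{same} complex structure $J_{sb}|_{\tilde D_i}$ (by Lemma~\ref{Lemma:Family_of_Form_small_blowup} and the fact that the almost toric blow-up form can also be arranged to tame the blow-up complex structure), so they are connected through the convex space of $J$-taming forms, giving a deformation; then McDuff's uniqueness applies. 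A secondary, more bookkeeping-heavy point is checking that the greedy ordering really does produce, on each $\tilde D_i$, exactly the blow-up points lying on distinct facets (no facet blown up twice, which would need the iterated almost toric blow-up and a slightly more elaborate base diagram) — but this is exactly what "greedy" was designed to ensure, since once a double point on $\partial F_i$ is resolved at stage $i$ it is not revisited.
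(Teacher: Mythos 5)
Your overall strategy (build a model ATF by almost toric blow-up of the toric surface $D_i$, match cohomology classes, then transport the fibration to $(\tilde D_i,\tilde\omega_t|_{\tilde D_i})$) is the same as the paper's, but your final step has a genuine gap. From McDuff-type uniqueness you only get a diffeomorphism $\Psi$ of $\tilde D_i$ with $\Psi^*(\text{model form})=\tilde\omega_t|_{\tilde D_i}$; such a $\Psi$ carries the model ATF to \emph{some} ATF on $(\tilde D_i,\tilde\omega_t|_{\tilde D_i})$, but its boundary cycle is $\Psi^{-1}$ of the model's boundary divisors, which is a priori just some anticanonical cycle, not the actual configuration $\bigcup_j\tilde D_{ij}$ of proper transforms. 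The conclusion of the lemma, $(\tilde\mu^i_t)^{-1}(\partial\tilde B_i)=\bigcup_j\tilde D_{ij}$, is exactly the point that matters later (the symplectic Kulikov model requires the ATF boundary to be the double-curve cycle, and the gluing in Proposition \ref{lmInterpolation1} and the Gross--Siebert comparison depend on it), and your proposal asserts it without justification. The missing ingredient is a statement that moves one orthogonal symplectic log Calabi--Yau cycle onto another by an \emph{ambient symplectomorphism}: this is Theorem \ref{Theorem:Isotopyofdivisors}, applied after the forms are matched, together with Remark \ref{Remark:Modifiedform_SymplecticOrthogonal} which guarantees the $\omega$-orthogonality hypothesis for $\bigcup_j\tilde D_{ij}$. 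This is precisely how the paper closes the argument: it produces a diffeomorphism $\tilde D_i\to D_i'$ matching the divisors and the cohomology classes, and then invokes the Torelli theorem to upgrade it to a symplectomorphism of \emph{pairs} $(\tilde D_i,\tilde\omega_t,\cup_j\tilde D_{ij})\to(D_i',\omega_t',\cup_jD_{ij}')$, so that the pulled-back ATF has the correct boundary. Your proposal could be repaired by inserting exactly this step (apply Theorem \ref{Theorem:Isotopyofdivisors} to the two cycles $\Psi^{-1}(\cup_jD'_{ij})$ and $\cup_j\tilde D_{ij}$ inside $(\tilde D_i,\tilde\omega_t)$), but as written it is incomplete.

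A secondary problem is the class bookkeeping for the model. You propose the almost toric blow-up of $(D_i,\omega|_{D_i})$ itself with sizes $t_i-t_j$; that model has boundary areas $\omega(D_{ij})-l_j(t_i-t_j)$. But $\alpha_t|_{\tilde D_i}$ is Poincar\'e dual to $\Sigma_{\vec t}\cap\tilde D_i$, which is not supported only on exceptional classes: it also pairs nontrivially with the boundary spheres $\tilde D_{ij}$ (the components $\tilde\Sigma_k$ meet $\tilde D_{ij}$ at the resolved double points), so $\tilde\omega_t(\tilde D_{ij})\neq\omega(D_{ij})-l_j(t_i-t_j)$ and your model is in the wrong cohomology class. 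The paper avoids this by taking as blow-down form not $\omega|_{D_i}$ but the symplectic blow-down $\omega_t$ of $\tilde\omega_t|_{\tilde D_i}$ itself (a small perturbation of $\omega|_{D_i}$ with enlarged boundary areas), checking via Lemma \ref{lemma:Toricopenness} that $\omega_t$ is still toric with the same boundary divisors for $|t|$ small, and only then performing the almost toric blow-up with sizes $s_j^t$, so that the areas of exceptional and boundary spheres match by construction. This part of your argument is fixable along the same lines, but as stated the class comparison is incorrect.
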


\begin{proof}
Let $\{E^i_1,\cdots, E^i_k\}$ denote the exceptional spheres on $\tilde{D}_i$ coming from performing the small blow-up (in the complex category) and let $s_j^t=\tilde{\omega}_t(E^i_j)$. By construction, for a given $j$, the size of the exceptional curves that come from blowing up points on $D_{ij}$ are all the same. We may choose for each $i$ a symplectic form on $D_i$ with the property that $\omega_t(D_{ij}) = \omega(D_{ij}) + l_js_j^t$ for each boundary divisor  $D_{ij}$ of $D_i$. Here $l_j$ is the number of blown up points on $D_{ij}$. Furthermore, each $D_{ij}$ is still symplectic for $\om_t$.  To see this, note the symplectic blow down is diffeomorphic to the complex blow down which reproduces $D$. This is realized by identifying a neighborhood of each exceptional curve with the tautological disc bundle. $\omega_t$ can be taken as the the pullback of the symplectic form  on the blow-down via this diffeomorphism.  

We proceed to construct a toric structure on $D_i$. By making $t$ small $\omega_t$ is arbitrarily close to $\omega$. By Lemma~\ref{lemma:Toricopenness}, there exists a form $\Omega_t$ cohomologous to $\om_t$ such that $\Omega_t$ admits a toric action with toric boundary $\cup_j D_{ij}$. Any symplectic toric manifold in dimension 4 is diffeomorphic to either $\CP^2\# k\overline{\CP^2}$ for some $k \in \mathbb{N}_{>0}$ or $S^2 \times S^2$. Thus by Theorem~\ref{Thm:Cohomologousformsarediffeo}, there exists a diffeomorphism $\varphi: (D_i,\om_t) \to (D_i,\Omega_t)$ that acts trivially on homology. Pulling back the toric action  on $(D_i,\Om_t)$ via $\varphi$, gives us a toric action on $(D_i,\om_t)$ with toric boundary $\varphi^{-1}(\cup_j D_{ij})$. As $\cup_j D_{ij}$ is symplectic for $\om_t$ by construction and $\varphi$ acts trivially on homology, we have that $\cup_j D_{ij}$ is in fact log Calabi-Yau for the symplectic form $\om_t$. 
Thus we can compose $\varphi$ with a symplectomorphism coming from the Torelli theorem (Theorem \ref{Theorem:Isotopyofdivisors}), to obtain a toric structure for $\om_t$ with $\cup_j D_{ij}$ as the toric boundary. 

We now proceed to construct an almost toric fibration on $(\tilde{D}_i,\tilde{\omega}_t|_{\tilde{D}_i})$. Let $(D'_i,\omega'_t)$ be the almost toric blow up of $(D_i,\omega_t)$ obtained by removing triangles of capacity $s^t_j$ from the moment polytope of $(D_i,\omega_t)$. Note that we use the fact that $s_j^t$ is small to guarantee the existence of disjoint triangles to perform the almost toric blow up. We can thus endow $D'_i$  with an almost toric fibration $\mu'_i:D'_i\to \tilde{B}_i$. Let $\bigcup_{j}D'_{ij}$ denote the boundary of this almost toric fibration. Since $\tilde{D}_i$ obtained by blowing up $i_j$ points on $D_{ij}$ in the complex category we obtain a diffeomorphism  $\tilde{D}_i\to D'_i$ which maps $\tilde{D}_{ij}$ to $D'_{ij}$. Moreover, it maps the class of $\tilde{\omega}_t|_{\tilde{D}_i}$ to the class of  $\omega'_t$ by equality of the areas of the exceptional spheres and the boundary spheres.

Furthermore, by Remark~\ref{Remark:Modifiedform_SymplecticOrthogonal} we know that $\tilde{D}_{ij}$ is symplectically orthogonal with respect to $\tilde{\omega}_t|_{D_i}$ and $\phi^{-1}(D'_{ij})$ is symplectically orthogonal with respect to $\phi^*\omega'_t$ by construction of the almost toric blow up. Applying the Torelli Theorem~\ref{Theorem:Isotopyofdivisors}, we obtain a symplectomorphism  $\psi:(\tilde{D}_i,\tilde{\omega}_t,\cup_j\tilde{D}_{ij})\to (D_i',\omega'_t ,\cup_jD'_{ij})$. 

Pulling back the almost toric fibration on $D'_i$ via $\psi$ gives us the required almost toric fibration on $(\tilde{D}_i, \tilde{\omega}_t|_{\tilde{D}_{i}})$ with boundary divisors $\cup_j \tilde{D}_{ij}$ as required.
 
\end{proof}
    
The following corollaries follow from Lemma~\ref{Lemma:Family_of_Form_small_blowup}, \ref{Lemma:ConstructionofATF} and Lemma \ref{E_{sb}isKulikov}.
\begin{cor}\label{CYToriKulikov}
    For $|\vec{t}|$ small enough, $(E_{sb},\tilde{\omega}_{\vec{t}})$ is a symplectic Kulikov model of type III for the degeneration defined by the resolution of the pencil associated with the anti-canonical sections $s_0,s_1$. 
\end{cor}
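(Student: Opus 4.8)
The statement of Corollary~\ref{CYToriKulikov} asserts that $(E_{sb}, \tilde\omega_{\vec t})$ satisfies all three clauses of Definition~\ref{Defn:AdmissibleKulikovModel} plus the type III conditions, so the plan is simply to assemble the pieces already established. The verification splits into four checks: (i) $(E_{sb}, \tilde\omega_{\vec t})$ is a Kulikov model (smooth total space, trivial canonical bundle, reduced normal crossings central fiber); (ii) it is of type III (dual intersection complex homeomorphic to $S^2$, rational components, anti-canonical cycles); (iii) $\tilde\omega_{\vec t}$ tames $J_{sb}$ and the triple intersections are symplectically orthogonal; (iv) each component $\tilde D_i$ carries an ATF with almost toric boundary $\sum_j \tilde D_{ij}$.

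\textbf{Order of steps.} First I would record that by the greedy resolution procedure $E_{sb}|_{\C}$ is a genuine complex manifold with normal crossings central fiber $\bigcup_i \tilde D_i$, each component being a blow-up of the toric divisor $D_i$, hence rational, and with double curves $\tilde D_{ij}$ rational; this is exactly what was observed in \S\ref{Subsection:Holformonsmallblowup}, where the holomorphic volume form $\Omega_{E_{sb}}$ was shown (via Hartogs) to extend and be nowhere vanishing, giving triviality of the canonical bundle. The combinatorics of the central fiber is unchanged by small blow-up away from the singular points, so the dual intersection complex is still that of the toric anticanonical degeneration, namely $\partial P$, which is $S^2$; thus the model is of type III. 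Second, for $|\vec t|$ small, Lemma~\ref{lmArbitraryToricClass} furnishes the symplectic form $\tilde\omega_{\vec t} = \tilde\pi^*\omega + \alpha_{\vec t}$ taming $J_{sb}$ (this is where Lemma~\ref{Lemma:Family_of_Form_small_blowup} and the positivity of intersections from Lemma~\ref{lmIntersection} enter), and Remark~\ref{Remark:Modifiedform_SymplecticOrthogonal} gives that the correction term $\alpha_{\vec t}$ can be supported away from the codimension~$2$ strata, so the three proper transforms $\tilde D_{ij}, \tilde D_{jk}, \tilde D_{ki}$ meeting at a triple point remain mutually symplectically orthogonal, as required by the first bullet of Definition~\ref{Defn:AdmissibleKulikovModel}. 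Third, Lemma~\ref{Lemma:ConstructionofATF} provides, for $|\vec t| < \epsilon$, an almost toric fibration $\tilde\mu^i_{\vec t}: (\tilde D_i, \tilde\omega_{\vec t}|_{\tilde D_i}) \to \tilde B_i$ with $(\tilde\mu^i_{\vec t})^{-1}(\partial \tilde B_i) = \bigcup_j \tilde D_{ij}$, which is precisely the second bullet. Collecting (i)--(iv) and choosing $\vec t$ small enough that all the cited lemmas apply simultaneously completes the proof.

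\textbf{Main obstacle.} There is essentially no new mathematical content here — the corollary is a bookkeeping statement — so the only real subtlety is uniformity of the smallness constant: each of Lemmas~\ref{Lemma:Family_of_Form_small_blowup}, \ref{lmArbitraryToricClass}, and \ref{Lemma:ConstructionofATF} produces its own $\epsilon$, and one must take the minimum and check that the chosen $\tilde\omega_{\vec t}$ (which is fixed once $\vec t$ is fixed, via Lemma~\ref{lmArbitraryToricClass}) is the \emph{same} form used in Lemma~\ref{Lemma:ConstructionofATF}. Since Lemma~\ref{Lemma:ConstructionofATF} is stated for exactly $\tilde\omega_t = \pi^*\omega + \alpha_t$ with $\alpha_t, \omega$ as in Lemma~\ref{lmArbitraryToricClass}, this matching is automatic, and the argument is just a one-line synthesis. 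I would also note explicitly, for the reader, that the tameness (rather than compatibility) of $\tilde\omega_{\vec t}$ is unavoidable because the small blow-up genuinely destroys the Kähler property, which is the reason Definition~\ref{Defn:AdmissibleKulikovModel} was phrased with tame forms in the first place.
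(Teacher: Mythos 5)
Your assembly is exactly the paper's (unwritten) proof: the corollary is stated there to follow from Lemma~\ref{Lemma:Family_of_Form_small_blowup}, Lemma~\ref{Lemma:ConstructionofATF} and \S\ref{Subsection:Holformonsmallblowup}, which are precisely the ingredients you combine (taming via Lemma~\ref{lmArbitraryToricClass} plus the orthogonality of Remark~\ref{Remark:Modifiedform_SymplecticOrthogonal}, the ATFs on the components $\tilde D_i$, and the trivial canonical bundle/type III discussion), together with the same observation that one takes the minimum of the finitely many smallness constants. The only cosmetic slip is calling $\partial P$ the dual intersection complex --- it is the intersection complex, the dual complex being its combinatorial dual --- but both are homeomorphic to $S^2$ here, so the type III conclusion is unaffected.
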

\begin{cor}
Let $(X,\omega,\mu_X)$ be a toric Fano 3-fold with momentum polytope $P$ and let $\pi: E\rightarrow \C$ be the pencil of as defined in Equation \ref{Equation:Pencilofsections}.  For $|\vec{t}|$ small enough, the general fiber of the resolved fibration $\tilde{\pi}$ endowed with the restriction of $\tilde{\omega}_{\vec{t}}$ carries an almost toric fibration whose base can be identified with the boundary of the moment polytope $\partial P$.
\end{cor}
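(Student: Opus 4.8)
The plan is to combine the two previous corollaries with the main Theorem~\ref{Thm:LTFongoodKulikov} and to identify the abstract intersection complex with the boundary of the moment polytope. By Corollary~\ref{CYToriKulikov}, for $|\vec t|$ small enough the data $(E_{sb},\tilde\omega_{\vec t},\tilde\pi)$ together with the almost toric fibrations $\tilde\mu^i_t$ on the components $\tilde D_i$ constructed in Lemma~\ref{Lemma:ConstructionofATF} constitute a symplectic Kulikov model of type III. Indeed the taming symplectic form comes from Lemma~\ref{lmArbitraryToricClass}, symplectic orthogonality of the triple intersections is Remark~\ref{Remark:Modifiedform_SymplecticOrthogonal}, and the ATFs with the prescribed boundary cycles are exactly what Lemma~\ref{Lemma:ConstructionofATF} provides. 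Applying Theorem~\ref{Thm:LTFongoodKulikov} to this model, the generic fiber $\tilde\pi^{-1}(t)$ with the restricted symplectic form admits an almost toric fibration over the intersection complex $\Delta_{E_{sb}}$ carrying the nodal integral affine structure of Lemma~\ref{lmIntAffStr}.

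The remaining point is to identify $\Delta_{E_{sb}}$, as a topological space, with $\partial P$. First I would note that the dual intersection complex is unchanged by the small blow up: the components of the central fiber $\tilde\pi^{-1}(0)$ are the proper transforms $\tilde D_i$, which are in bijection with the toric divisors $D_i$ and hence with the facets $F_i$ of $P$; double curves $\tilde D_{ij}$ correspond to the edges $F_i\cap F_j$, i.e.\ to the edges of $P$; and triple points $\tilde D_{ijk}$ correspond to vertices of $P$. Since the blow up is an isomorphism away from finitely many points on the $1$-dimensional strata, these incidence relations are exactly those of $\partial P$. Building the cell complex $B$ as in Proposition~\ref{propPcwsNAffine} from this data then produces a complex whose $1$-skeleton is combinatorially that of $\partial P$ and whose $2$-cell attached to $\tilde D_i$ is the base $\tilde B_i$ of the ATF on $\tilde D_i$; since $\tilde B_i$ is a small modification (by almost toric blow ups along the boundary) of the moment polytope of $(D_i,\omega_t)$, which is a facet of $P$ up to the boundary perturbation, one obtains a homeomorphism $\Delta_{E_{sb}}\cong \partial P$. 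Composing with this homeomorphism, the ATF from Theorem~\ref{Thm:LTFongoodKulikov} becomes an ATF of the generic fiber over $\partial P$, which is the assertion.

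The main obstacle, as in Theorem~\ref{thmToricKulikov}, is that this argument as stated only identifies the \emph{underlying cell complex} and not the \emph{integral affine structure}: the base $\tilde B_i$ is the moment polytope of a small perturbation of $D_i$ with corners replaced by almost toric blow ups, so it need not literally be the facet $F_i$ with its standard affine structure. For the present corollary, which only claims an ATF whose base \emph{can be identified with} $\partial P$, the homeomorphism above suffices; the finer statement that the induced nodal integral affine structure agrees with $\mathcal A_{GS}$ up to nodal slides is precisely the content of Theorem~\ref{thmToricKulikov}, whose proof occupies the remainder of this section. So here I would simply invoke Corollary~\ref{CYToriKulikov} and Theorem~\ref{Thm:LTFongoodKulikov}, record the combinatorial identification of $\Delta_{E_{sb}}$ with $\partial P$, and defer the affine-structure comparison to Theorem~\ref{thmToricKulikov}.
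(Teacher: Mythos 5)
Your proposal is correct and follows exactly the route the paper intends: the paper leaves this corollary without an explicit argument, deriving it from Corollary \ref{CYToriKulikov} (the resolved family is a symplectic Kulikov model) together with Theorem \ref{Thm:LTFongoodKulikov}, plus the observation that the small resolution does not change the combinatorics of the intersection complex, so it is identified with $\partial P$. Your added remark that the finer integral affine comparison with $\mathcal{A}_{GS}$ is deferred to Theorem \ref{thmToricKulikov} (via Proposition \ref{prpGSIAStr}) is also consistent with how the paper organizes the material.
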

 
\begin{rmk}
  One can also appeal to Theorem 2 in \cite{Li-Min-Ning}  to directly produce the almost toric fibration  $\mu: (\tilde{D}_i, \tilde{\omega}_{\vec{t}}|_{D_{i}}) \rightarrow B$ with boundary $\bigcup_j D_{ij}$. However, we chose to give a self contained proof to have an explicit description of the almost toric fibration. We use the construction in Lemma~\ref{Lemma:ConstructionofATF} to provide an explicit isomorphism with the Gross-Siebert construction in Section~\ref{SecGrossSiebert}.
\end{rmk} 

\subsection{Relation to the Gross-Siebert construction}\label{SecGrossSiebert}
Let $\cX_{t}=(E_{sb},\tilde{\omega_t})$ be a symplectic Kulikov model as in the previous subsection. 
We now turn to give an explicit description of the nodal integral affine structure produced by Theorem \ref{tmNAffine} on the intersection complex $\Delta_{\cX_t}$.  

Let $N\simeq\Z^3$ be a three dimensional lattice and let  $P\subset N_{\mathbb{R}}=N\otimes \R$ be a Delzant polytope cut out by half spaces $\langle \nu_i,x\rangle\leq b_i$ where $\nu_i$ are primitive generators in the dual lattice $M=\Hom(N,\Z)$ and $b_i$ are positive real numbers. Then $\partial P$ carries a piecewise integral affine structure induced from the embedding in $N_{\mathbb{R}}$. By the Delzant condition, we can extend this integral affine structure to include open neighborhoods of the vertices by taking the canonical structure introduced in Definition \ref{dfCanIntAff}.  Let $S=\{p_1,\dots, p_N\}$ be the set of midpoints of the edges of $\partial P$. Then this extends uniquely to an integral affine structure on $\partial P\setminus S$. It is not hard to see that the monodromy around a point $p\in S$ is a shear whose eigenline is the edge containing $p$. We call this nodal integral affine structure the \emph{Gross-Siebert} structure and denote it by $\mathcal{A}_{GS}$. 

\begin{rmk}
  This nodal integral affine structure is constructed 
  in \cite{GrossSiebert2002}  on the intersection complex of the original toric  degeneration $\pi:X\to \CP^1$. It works more generally without the Delzant assumption. The polytope $P$ is taken in \cite{GrossSiebert2002}  to be the Newton polytope of an ample line bundle on $X$. $P$ is then a lattice polytope. This is extended to an integral affine structure on the complement of the centers of the edges by specifying a \emph{fan structure} for each vertex $v$ which is readily seen to induce the canonical structure from definition \ref{dfCanIntAff}. In our setting, rather than considering the Newton polytope, we consider for each toric form $\omega$ on $X$ the moment polytope $P_{\omega}$.   
\end{rmk}
 
We then have the following claim:
\begin{prop}\label{prpGSIAStr}
    Assume the moment polytope $P_{\omega}$ of $X$ is cut out by the inequalities $\langle\nu_i,x\rangle\leq b_i$ for $1\leq i\leq m$. Suppose we carry out the greedy small blow up with parameters $t_1> t_2\dots>t_m$. Then after nodal slides along monodromy invariant lines the induced nodal integral affine structure $\mathcal{A}_{\vec{t}}$ on the intersection complex is integral affine isomorphic to the Gross-Siebert integral affine structure on the polytope $P_{\omega_{\vec{t}}}$ cut out by $\langle \nu_i,x\rangle\leq b_i+t_i$.
\end{prop}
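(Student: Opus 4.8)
The strategy is to track the integral affine structure through the two sources from which it is built in Theorem \ref{tmNAffine}: the canonical structure near the vertices (Definition \ref{dfCanIntAff}), and the ATFs $\tilde\mu^i_t$ on the components $\tilde D_i$ produced in Lemma \ref{Lemma:ConstructionofATF}. Near a vertex $X_{ijk}$ both the Gross--Siebert structure $\mathcal A_{GS}$ on $\partial P_{\omega_{\vec t}}$ and the structure $\mathcal A_{\vec t}$ from Theorem \ref{tmNAffine} are by construction the canonical structure associated with the three edge directions; so the local pictures at the $0$-skeleton already agree, and the content of the proposition is entirely in how the $2$-cells (and the positions of the nodes) are glued in. First I would set up the combinatorial dictionary: the $2$-cells of $\Delta_{\cX_t}$ are indexed by the components $\tilde D_i$, hence by the facets $F_i$ of $P_{\omega}$; the edges are indexed by the curves $\tilde D_{ij}$, hence by the edges $F_i\cap F_j$ of $P_\omega$; and the lengths match because the edge $B_{ij}=[0,\tilde\omega_t(\tilde D_{ij})]$ has length $\tilde\omega_t(\tilde D_{ij})=\omega(D_{ij})+l_j s_j^t$ (notation of Lemma \ref{Lemma:ConstructionofATF}), which is exactly the symplectic area of the corresponding edge of the perturbed Delzant polytope $P_{\omega_{\vec t}}$ with support constants $b_i+t_i$. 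So the $1$-skeleta, with their integral affine edge-structure and vertex-germs, are affinely isomorphic.

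The main step is to identify the integral affine $2$-cell associated with $\tilde D_i$. By Lemma \ref{Lemma:ConstructionofATF} this $2$-cell is the base $\tilde B_i$ of an almost toric fibration obtained as an \emph{almost toric blow up} (Definition \ref{Defn:ATFblowup}) of the moment polytope of the toric form $\omega_t$ on $D_i$, removing, for each edge $D_{ij}$ carrying $l_j$ blown-up nodes, $l_j$ triangles of capacity $s_j^t$ along that edge. On the Gross--Siebert side, the corresponding face $F_i$ of $P_{\omega_{\vec t}}$ is a Delzant polygon with a single node at the midpoint of each edge. Thus what I must show is: (i) the moment polygon of $(D_i,\omega_t)$, as a Delzant polygon, is integral affine isomorphic to $F_i\subset P_\omega$ — this is the statement that the toric surface $D_i$ with the appropriate support constants is the toric surface cut out by $F_i$, which follows from the standard toric geometry of the Fano $X$ together with the area-matching of the previous paragraph; and (ii) after the almost toric blow ups, the resulting nodal polygon $\tilde B_i$ is related to $F_i$ (with its GS nodes) by nodal slides along the monodromy-invariant lines. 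For (ii) the point is that an almost toric blow up of capacity $s$ along an edge of the polygon creates a node at distance $O(s)$ from the edge, whose invariant line is parallel to the edge; a nodal slide moves this node along that invariant line, and in particular can move it to the midpoint of the edge and push it to any prescribed interior distance. Since nodal slides are exactly the moves allowed in the statement, this matches the GS node placement. One must also check the \emph{number} of nodes per edge agrees: the greedy blow up scheme places, on the edge $F_i\cap F_j$, precisely the number of double points lying on $C_{ij}=D_i\cap D_j$ that get resolved on the $D_i$-side, and the total over the two sides is the full count of nodes the GS structure expects on that edge (one per lattice length, i.e. matching $24$ in total over $\partial P$ by the anticanonical condition). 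I would verify this bookkeeping using Lemma \ref{Lemma:Baselocusintersectsallstrata} and the inductive description of the greedy scheme.

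I expect the main obstacle to be step (ii) together with the global consistency of the nodal slides: it is not enough to match each $2$-cell to $F_i$ up to nodal slide \emph{separately}; the slides must be performed compatibly so that, edge by edge, the node contributed from the $\tilde D_i$ side and the node(s) contributed from the $\tilde D_j$ side — which in the intersection complex all live in the interiors of the top cells on the two sides of the shared edge $B_{ij}$ — can be simultaneously slid onto the configuration prescribed by $\mathcal A_{GS}$ (where GS places its nodes \emph{on} the edge midpoints). Concretely, I must check that a node sitting in the interior of $\tilde B_i$ near the edge $B_{ij}$, with invariant line parallel to $B_{ij}$, can be slid \emph{onto} the edge and then, viewed from the other side, this is consistent with the structure coming from $\tilde B_j$; equivalently, that the monodromy contributions around such a node, as seen in the ambient affine chart near the edge provided by Theorem \ref{tmNAffine}, are exactly the shears the GS structure has at its edge-midpoint nodes. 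This is essentially a local computation in the affine chart of Theorem \ref{tmNAffine} around an edge: the chart is built so that the circle actions on $TX_i|_{X_{ij}}$ and $TX_j|_{X_{ij}}$ agree, which is precisely what forces the two sides' node data to be compatible, so I anticipate it works, but writing it down carefully — keeping track of which primitive integral vectors are the edge directions and verifying the shear is conjugate to $\left(\begin{smallmatrix}1&1\\0&1\end{smallmatrix}\right)$ in an \emph{integral} basis — is where the real work lies. Once this local/global node bookkeeping is settled, the isomorphism $\mathcal A_{\vec t}\cong \mathcal A_{GS}(P_{\omega_{\vec t}})$ up to nodal slides is assembled by gluing the per-cell isomorphisms along the already-matched $1$-skeleton, using the uniqueness clause of Theorem \ref{tmNAffine} to guarantee the glued object is the structure $\mathcal A_{\vec t}$.
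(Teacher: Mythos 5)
Your reduction to a cell-by-cell comparison contains a genuine gap, and it is exactly the point the paper's inductive argument is designed to handle. A node produced by an almost toric blow up of capacity $s$ at a point of an edge has its monodromy-invariant line \emph{parallel} to that edge, at affine distance $s$ from it (the distance is the area of the exceptional sphere, which by Lemma \ref{lmIntersection} is $t_i-t_j$ for a double point on $D_{ij}$, $i<j$, resolved inside $D_i$). A nodal slide moves the node only along this invariant line, so it can never be ``slid onto the edge'' nor have its distance to the edge changed; your own text first records the parallelism and then asks for precisely this forbidden move. Consequently the plan of matching each $2$-cell $\tilde B_i$ of $\Delta_{\cX_{\vec t}}$ with the face $F_i$ of $P_{\omega_{\vec t}}$ (carrying its Gross--Siebert nodes on the edge midpoints) and then gluing along a pre-matched $1$-skeleton cannot work: up to nodal slides the nodes of $\mathcal{A}_{\vec t}$ remain interior to the $2$-cells, while the GS nodes of $F_i$ lie on its boundary, and the two pieces are not integral affine isomorphic.

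The paper resolves this by an isomorphism that does \emph{not} respect the cell decompositions. Proceeding by induction over the greedy order, with hybrid degenerations and Lemma \ref{lmNAffine}, it constructs maps $f_k:Q^{(k)}\to\Delta^{(k)}$ in which the inflated face $\partial_kQ^{(k)}$ corresponds to the inner polygon $p_k$ bounded by the monodromy-invariant segments through the nodes, while the trapezoidal strips of the $2$-cell between $p_k$ and the old boundary $\partial F_k$ are absorbed into the images of the \emph{neighboring} faces; the substantive step is showing, via the developing map and the affine functions $H_1$, $G_1$, that the neighboring face extends strictly affinely across the old edge exactly up to the invariant line (the level set at $b_k+t_k$), which also uses the canonical vertex structure of Definition \ref{dfCanIntAff} and the fact (Lemma \ref{lmIntersection}) that processing later faces pushes the earlier nodes further into the interior by $t_k$. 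Under this correspondence the nodes of $\mathcal{A}_{\vec t}$ land \emph{on} the edges of $P_{\omega_{\vec t}}$, and only slides along those edges (their invariant lines) are needed to reach $\mathcal{A}_{GS}$. Your combinatorial dictionary, area bookkeeping, and node count are reasonable preliminaries (though note that in the greedy scheme all nodes over an edge $D_{ij}$, $i<j$, sit on the $D_i$ side, and the relevant distances are $t_i-t_j$, not $O(t_i)$ per side), but without the strip-absorption mechanism the assembly step fails.
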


Before proving Proposition \ref{prpGSIAStr}  we need to consider a version of Proposition \ref{propPcwsNAffine} and Theorem \ref{tmNAffine} applied to the intersection complex for a hybrid degeneration.
That is, consider a degeneration arising by starting with a smooth toric Fano and carrying out some of the steps of greedy small blowup.
Call this a \emph{hybrid degeneration}. All the triple intersections are normal crossings, and all the components are equipped with almost toric fibrations. But there are some double point singularities in the total space. Moreover, any face which contains a double point is a toric variety by greediness. To such a degeneration we can associate an intersection complex $\Delta$. Moreover, the construction in the proof \ref{propPcwsNAffine} can be applied to endow $\Delta$ with an integral affine structure up to isomorphism. We then have the following lemma. 

\begin{lemma}\label{lmNAffine}
    Suppose $F$ is a face of $\Delta$ corresponding to a component of the central fiber which contains no double point. Then there is a neighborhood $V$ of $F$ on which there is a unique extension of the  piecewise nodal integral affine structure on $V$ to a strict nodal integral affine structure which agrees with the canonical  structure of Definition \ref{dfCanIntAff} on neighborhoods of the vertices.
\end{lemma}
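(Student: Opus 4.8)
\textbf{Proof strategy for Lemma \ref{lmNAffine}.}

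The plan is to adapt the proof of Theorem \ref{tmNAffine} to the local situation near the face $F$, exploiting the fact that $F$ and all its incident strata are genuine normal crossings. The only input that was used in the proof of Theorem \ref{tmNAffine} was: (i) the existence of a Calabi-Yau form $\Omega$ on a neighborhood of each double curve $X_{ij}$ of the central fiber; (ii) symplectic orthogonality of the three divisors at each triple point; and (iii) the ATF's on the components. All three are available for any component $X_i$ of a hybrid degeneration that contains no double point, because by greediness such an $X_i$ is still smooth and meets the neighboring components transversally, and the holomorphic volume form $\Omega_{E_{sb}}$ constructed in Section \ref{Subsection:Holformonsmallblowup} is defined and non-vanishing on a neighborhood of $X_i$ (the double points all lie on faces that are toric varieties, hence away from $F$). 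So I would first fix such a neighborhood $V$ of $F$ in $\Delta$, small enough that it contains no node and so that the only strata it meets are $F$, the edges $e_{ij}\subset\partial F$, the vertices of $F$, and small collars of the adjacent faces $F_j$; and I would check that each edge $e_{ij}$ in $\partial F$ comes from a double curve $X_{ij}$ over which $\Omega_{E_{sb}}$ is defined.

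Next I would run the edge-by-edge argument of Theorem \ref{tmNAffine} verbatim on $V$: for each edge $e_{ij}$ of $\partial F$ choose a basepoint in its interior, form the rank-$3$ lattice $\mathcal{A}$ of piecewise integral linear functions on a nodeless neighborhood of $e_{ij}$, define the weight homomorphism $wt:\mathcal{A}\to\Z$ via the degree of the induced circle action on $\Omega_{E_{sb}}$ along $X_{ij}$, and declare $\ker wt$ to be the integral affine structure there. The verifications that $wt$ is a well-defined surjective homomorphism, that the resulting chart extends the piecewise structures on $V\cap X_i$ and $V\cap X_j$, and that it is compatible with the canonical structure of Definition \ref{dfCanIntAff} near each vertex of $F$, are exactly as in the proof of Theorem \ref{tmNAffine} — the key point being that near a vertex $X_{ijk}$ the form $\Omega_{E_{sb}}$ agrees with the standard $\Omega_0$ on $\C^3$ up to a scalar, which does not affect the weight computation. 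Gluing these charts over the interior of $F$ (where the ATF $\tilde\mu^i$ already provides a strict nodal structure) gives the desired extension on all of $V$.

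For uniqueness I would argue, as in Theorem \ref{tmNAffine} and Remark \ref{rmkNoMonodromy}, that on the overlap of a neighborhood of an edge with a neighborhood of a vertex, any two piecewise-linear charts compatible with the given piecewise structure differ by an element of $\mathcal{A}$, and the normalization condition ``monodromy is the standard shear around the nodes in $\mathrm{int}(F)$, and the structure near vertices is the canonical one'' pins down the relevant sublattice; so the extension over $V$ is forced once it is forced over the interior of $F$ and over the collars, where it already is. The main obstacle, and the thing that needs to be stated carefully rather than just cited, is the claim that the Calabi-Yau form $\Omega_{E_{sb}}$ really is defined and non-vanishing on an entire neighborhood of $F$ in the total space $E_{sb}$ — i.e., that the double points of the hybrid total space that have not yet been resolved do not obstruct the residue construction near $X_i$. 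This follows because those unresolved double points lie in the $1$-dimensional toric strata of faces other than $F$, hence are disjoint from a suitably small neighborhood of $X_i$; on that neighborhood $E_{sb}$ is smooth with trivial canonical bundle and the argument of Section \ref{Subsection:Holformonsmallblowup} applies unchanged. Once this localization point is made, the rest is a routine transcription of the proof of Theorem \ref{tmNAffine}.
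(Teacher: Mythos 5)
Your proposal is correct and follows essentially the same route as the paper, whose proof of Lemma \ref{lmNAffine} is simply to run the argument of Theorem \ref{tmNAffine} face by face on the hybrid degeneration; your edge-by-edge weight construction and the vertex compatibility check are exactly that argument localized near $F$. The extra point you single out — that the unresolved double points avoid a neighborhood of the component corresponding to $F$, so the residue volume form and the normal-crossings/orthogonality inputs are available there — is precisely what makes the transcription legitimate, and is a worthwhile (if implicit in the paper) clarification.
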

\begin{proof}
    This follows as in Theorem~\ref{tmNAffine} but done for each face of the hybrid degeneration. 
\end{proof}

We will also need the following definition. 

\begin{defn}{[Developing map (Remark 2.20 in \cite{Evans_2023})]}
Let $M$ be an $n$-dimensional integral affine manifold. Then there is a globally defined local diffeomorphism $D: \tilde{M} \rightarrow \R^n$ from the universal cover $\Tilde{M}$, into $\R^n$ such that the integral affine structure of $\Tilde{M}$ (coming from the covering map to M) agrees with the pullback of the integral
affine structure along $D$. This is called the developing map. 
\end{defn}

\begin{proof}[Proof of Proposition \ref{prpGSIAStr}]
Order the components of $\partial P$ as $F_1,F_2\dots F_m$. For $k=1,\dots m$ let $\Delta^{(k)}$ be the boundary $\partial P_{\omega}$ of the moment polytope endowed with the piecewise nodal integral affine structure obtained by Proposition \ref{lmNAffine} from the hybrid degeneration associated with carrying out the first $k$ steps of the greedy blow-up. Note $\Delta^{(k)}$ is obtained from $\Delta^{(k-1)}$ by putting on $F_k$ the nodal integral affine structure whose edges have the same affine length as that of the toric integral affine structure with nodes placed an affine distance of $t_k$ from the boundary. Thus, it attaches continuously to $\Delta^{(k-1)}\setminus F_k.$ Theorem \ref{lmNAffine} then tells us the piecewise integral affine structure extends smoothly across $\partial F_k$. 
\tikzset{every picture/.style={line width=0.75pt}} 
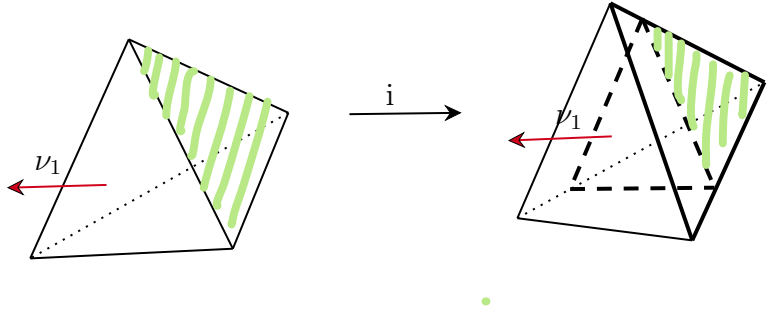
\begin{figure}[H]
    \centering
    \caption{The inclusion $Q^{0}\setminus\partial_1Q^{(0)}\to Q^{(1)}\setminus \partial_1Q^{(1)}$ }
    \label{fig:inclusion}
\begin{tikzpicture}[x=0.75pt,y=0.75pt,yscale=-0.7,xscale=0.7] 
[id:da038881653948903194] \draw [line width=1.5] [dash pattern={on 5.63pt off 4.5pt}] (480,43) -- (532,165) ; 
[id:da46930993744391714] \draw [line width=1.5] [dash pattern={on 5.63pt off 4.5pt}] (480,43) -- (428,166) ; 
[id:da05775791796222274] \draw [line width=1.5] [dash pattern={on 5.63pt off 4.5pt}] (428,166) -- (532,165) ; 
[id:da15476197947590087] \draw [dash pattern={on 0.84pt off 2.51pt}] (390,187) -- (568,89) ; 
[id:da31190267816454686] \draw [color={rgb, 255:red, 0; green, 0; blue, 0 } ,draw opacity=1 ][line width=1.5] (457,32) -- (568,89) ; 
[id:da4930129637659202] \draw [color={rgb, 255:red, 0; green, 0; blue, 0 } ,draw opacity=1 ][line width=1.5] (568,89) -- (516,203) ; 
[id:da15288893086629518] \draw (457,32) -- (390,187) ; 
[id:da8565839864457827] \draw [color={rgb, 255:red, 0; green, 0; blue, 0 } ,draw opacity=1 ][line width=1.5] (457,32) -- (516,203) ; 
[id:da8086646223680668] \draw (390,187) -- (516,203) ; 
[id:da1777731909884952] \draw (269,112) -- (346,111.04) ; \draw [shift={(349,111)}, rotate = 179.28] [fill={rgb, 255:red, 0; green, 0; blue, 0 } ][line width=0.08] [draw opacity=0] (10.72,-5.15) -- (0,0) -- (10.72,5.15) -- (7.12,0) -- cycle ; 
[id:dp34757533971838406] \draw [color={rgb, 255:red, 184; green, 233; blue, 134 } ,draw opacity=1 ][line width=3] [line join = round][line cap = round] (491,52) .. controls (491,56) and (491,60) .. (491,64) ; 
[id:dp2618366541540159] \draw [color={rgb, 255:red, 184; green, 233; blue, 134 } ,draw opacity=1 ][line width=3] [line join = round][line cap = round] (499,55) .. controls (499,64.67) and (499,74.66) .. (499,84) ; 
[id:dp45763217056494643] \draw [color={rgb, 255:red, 184; green, 233; blue, 134 } ,draw opacity=1 ][line width=3] [line join = round][line cap = round] (508,61) .. controls (508,73.12) and (504.29,86.01) .. (506,98) .. controls (506.21,99.48) and (508,100.51) .. (508,102) ; 
[id:dp48521254287420745] \draw [color={rgb, 255:red, 184; green, 233; blue, 134 } ,draw opacity=1 ][line width=3] [line join = round][line cap = round] (519,69) .. controls (519,67.67) and (519.16,71.68) .. (519,73) .. controls (518.53,77.03) and (517.4,80.96) .. (517,85) .. controls (516.05,94.55) and (513.09,113.17) .. (517,121) ; 
[id:dp7058835398383511] \draw [color={rgb, 255:red, 184; green, 233; blue, 134 } ,draw opacity=1 ][line width=3] [line join = round][line cap = round] (531,76) .. controls (531,92.02) and (527.06,110.7) .. (526,127) .. controls (525.57,133.65) and (526,153.67) .. (526,147) ; 
[id:dp8051431787247718] \draw [color={rgb, 255:red, 184; green, 233; blue, 134 } ,draw opacity=1 ][line width=3] [line join = round][line cap = round] (543,84) .. controls (538.72,96.84) and (538,120.13) .. (538,133) ; 
[id:dp2639732298441292] \draw [color={rgb, 255:red, 184; green, 233; blue, 134 } ,draw opacity=1 ][line width=3] [line join = round][line cap = round] (554,84) .. controls (553.87,85.59) and (552,117.66) .. (552,112) ; 
[id:da3307700442287226] \draw [color={rgb, 255:red, 208; green, 2; blue, 27 } ,draw opacity=1 ] (458,128) -- (387,130.88) ; \draw [shift={(384,131)}, rotate = 357.68] [fill={rgb, 255:red, 208; green, 2; blue, 27 } ,fill opacity=1 ][line width=0.08] [draw opacity=0] (10.72,-5.15) -- (0,0) -- (10.72,5.15) -- (7.12,0) -- cycle ; 
[id:da8833101741049766] \draw (110,58) -- (185,209) ; 
[id:da6255660865381705] \draw (110,58) -- (39,216) ; 
[id:da22738745661456383] \draw (39,216) -- (185,209) ; 
[id:da2527114373230739] \draw [dash pattern={on 0.84pt off 2.51pt}] (225,111) -- (39,216) ; 
[id:da873277983442696] \draw (110,58) -- (225,111) ; 
[id:da545503884980962] \draw (225,111) -- (185,209) ; 
[id:da6006178501191979] \draw [color={rgb, 255:red, 208; green, 2; blue, 27 } ,draw opacity=1 ] (94,163) -- (26,164.92) ; \draw [shift={(23,165)}, rotate = 358.39] [fill={rgb, 255:red, 208; green, 2; blue, 27 } ,fill opacity=1 ][line width=0.08] [draw opacity=0] (10.72,-5.15) -- (0,0) -- (10.72,5.15) -- (7.12,0) -- cycle ; 
[id:dp2560059326465801] \draw [color={rgb, 255:red, 184; green, 233; blue, 134 } ,draw opacity=1 ][line width=3] [line join = round][line cap = round] (367,247) .. controls (367,247) and (367,247) .. (367,247) ; 
[id:dp7449356659913429] \draw [color={rgb, 255:red, 184; green, 233; blue, 134 } ,draw opacity=1 ][line width=3] [line join = round][line cap = round] (124,66) .. controls (124,69.82) and (122.56,77.87) .. (121,81) ; 
[id:dp26300122894456746] \draw [color={rgb, 255:red, 184; green, 233; blue, 134 } ,draw opacity=1 ][line width=3] [line join = round][line cap = round] (132,71) .. controls (132,75.59) and (123.77,103.23) .. (129,98) ; 
[id:dp3604941179019505] \draw [color={rgb, 255:red, 184; green, 233; blue, 134 } ,draw opacity=1 ][line width=3] [line join = round][line cap = round] (144,74) .. controls (144,83.76) and (137,99.84) .. (137,113) ; 
[id:dp0710345106432606] \draw [color={rgb, 255:red, 184; green, 233; blue, 134 } ,draw opacity=1 ][line width=3] [line join = round][line cap = round] (157,81) .. controls (150.17,81) and (151.47,102.14) .. (150,109) .. controls (149.8,109.92) and (144.85,127.15) .. (148,124) ; 
[id:dp03070458640519791] \draw [color={rgb, 255:red, 184; green, 233; blue, 134 } ,draw opacity=1 ][line width=3] [line join = round][line cap = round] (169,87) .. controls (169,98.6) and (161.75,112.55) .. (159,124) .. controls (157.66,129.6) and (157.19,135.37) .. (156,141) .. controls (155.9,141.46) and (155,142.47) .. (155,142) ; 
[id:dp835468017140248] \draw [color={rgb, 255:red, 184; green, 233; blue, 134 } ,draw opacity=1 ][line width=3] [line join = round][line cap = round] (183,95) .. controls (179.58,118.97) and (169.45,141.4) .. (164,165) ; 
[id:dp7557175244497981] \draw [color={rgb, 255:red, 184; green, 233; blue, 134 } ,draw opacity=1 ][line width=3] [line join = round][line cap = round] (197,99) .. controls (191.56,119.67) and (185.33,140.64) .. (179,161) .. controls (177.62,165.43) and (175.47,169.6) .. (174,174) .. controls (173.58,175.26) and (173.06,178.94) .. (174,178) ; 
[id:dp723174229761047] \draw [color={rgb, 255:red, 184; green, 233; blue, 134 } ,draw opacity=1 ][line width=3] [line join = round][line cap = round] (210,103) .. controls (205.36,123.1) and (198.95,142.37) .. (193,162) .. controls (190.86,169.06) and (188.1,175.93) .. (186,183) .. controls (185.22,185.64) and (184,193.75) .. (184,191) ; 
\draw (293,88) node [anchor=north west][inner sep=0.75pt] [align=left] {i}; 
\draw (415,107) node [anchor=north west][inner sep=0.75pt] [align=left] {$\nu_1$}; 
\draw (40,140) node [anchor=north west][inner sep=0.75pt] [align=left] {$\nu_1$}; 
\end{tikzpicture}
\end{figure}
Let $Q^{(k)}$ be the boundary of the polytope obtained from $P_{\omega}$ by replacing the first $k$ defining inequalities $\langle \nu_i,x\rangle\leq b_i$ with $\langle \nu_i,x\rangle\leq b_i+t_i$. We inductively construct piecewise integral affine isomorphisms $$f_k:Q^{(k)}\to \Delta^{(k)}$$
such that the following hold:
\begin{enumerate}
    \item The restriction of $f_k$ to any face is a strict integral affine isomorphism. 
    \item $f_k$ agrees with the restriction of $f_{k-1}$ to $Q^{(k-1)}\setminus \partial_k Q^{(k-1)}$. Here $\partial_k Q^{(k-1)}$ is the face normal to $\nu_k$. For $k=1$ we take $f_0:Q^{(0)}\to \Delta^{(0)}$ to be the identity.
    \item The image under $f_k$ of any edge occurring in the first $k$ faces of $Q^{(k)}$ contains at least one node.
\end{enumerate}

The claim then follows by putting $k=m$. Indeed, $f_m$ is a piecewise integral affine isomorphism from $\partial P_{\omega_{\vec{t}}}$ with the affine structure induced from $N_{\R}$ to $\Delta_{\cX_{\vec{t}}}$ with the nodal integral affine structure $\mathcal{A}_{\vec{t}}$. Moreover, $f_m$ is strictly integral affine on the faces of $Q^{(m)}=\partial P_{\omega_{\vec{t}}}$. Since the integral affine structure in the neighborhood of each vertex of $\Delta$ is the canonical one, we have $\mathcal{A}_{\vec{t}}=\mathcal{A}_{GS}$.

\tikzset{every picture/.style={line width=0.75pt}} 
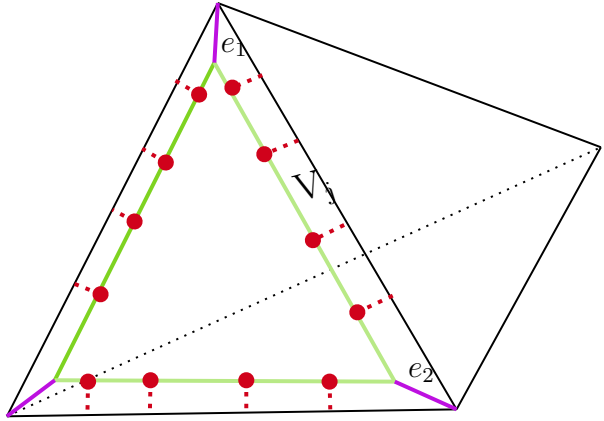
\begin{figure}[H]
    \centering
    \begin{tikzpicture}[x=0.75pt,y=0.75pt,yscale=-0.7,xscale=0.7] 
[id:da9218287184638247] \draw (300,15) -- (148,313) ; 
[id:da01717133214506572] \draw (300,15) -- (472,308) ; 
[id:da42684115616129414] \draw (148,313) -- (472,308) ; 
[id:da08314832122389437] \draw [dash pattern={on 0.84pt off 2.51pt}] (148,313) -- (576,119) ; 
[id:da45851105823226146] \draw (300,15) -- (576,119) ; 
[id:da9625768229150794] \draw (576,119) -- (472,308) ; 
[id:da3783312263714429] \draw [color={rgb, 255:red, 126; green, 211; blue, 33 } ,draw opacity=1 ][line width=1.5] (297.5,58) -- (182.5,287) ; 
[id:da8642434179856998] \draw [color={rgb, 255:red, 184; green, 233; blue, 134 } ,draw opacity=1 ][line width=1.5] (182.5,287) -- (427.5,288) ; 
[id:da9367557507116399] \draw [color={rgb, 255:red, 184; green, 233; blue, 134 } ,draw opacity=1 ][line width=1.5] (297.5,58) -- (427.5,288) ; 
[id:da5344566371506331] \draw [color={rgb, 255:red, 189; green, 16; blue, 224 } ,draw opacity=1 ][line width=1.5] (300,15) -- (297.5,58) ; 
[id:da3890167252979082] \draw [color={rgb, 255:red, 189; green, 16; blue, 224 } ,draw opacity=1 ][line width=1.5] (148,313) -- (182.5,287) ; 
[id:da7686883960512433] \draw [color={rgb, 255:red, 189; green, 16; blue, 224 } ,draw opacity=1 ][line width=1.5] (427.5,288) -- (472,308) ; 
[id:da6012898051223005] \draw [color={rgb, 255:red, 208; green, 2; blue, 27 } ,draw opacity=1 ][line width=1.5] [dash pattern={on 1.69pt off 2.76pt}] (262.5,130) -- (245.5,120) ; \draw [shift={(262.5,130)}, rotate = 210.47] [color={rgb, 255:red, 208; green, 2; blue, 27 } ,draw opacity=1 ][fill={rgb, 255:red, 208; green, 2; blue, 27 } ,fill opacity=1 ][line width=1.5] (0, 0) circle [x radius= 4.36, y radius= 4.36] ; 
[id:da2745185438219655] \draw [color={rgb, 255:red, 208; green, 2; blue, 27 } ,draw opacity=1 ][line width=1.5] [dash pattern={on 1.69pt off 2.76pt}] (269.5,71) -- (286.5,81) ; \draw [shift={(286.5,81)}, rotate = 30.47] [color={rgb, 255:red, 208; green, 2; blue, 27 } ,draw opacity=1 ][fill={rgb, 255:red, 208; green, 2; blue, 27 } ,fill opacity=1 ][line width=1.5] (0, 0) circle [x radius= 4.36, y radius= 4.36] ; 
[id:da9944451519817689] \draw [color={rgb, 255:red, 208; green, 2; blue, 27 } ,draw opacity=1 ][line width=1.5] [dash pattern={on 1.69pt off 2.76pt}] (240,172.5) -- (224,164) ; \draw [shift={(240,172.5)}, rotate = 207.98] [color={rgb, 255:red, 208; green, 2; blue, 27 } ,draw opacity=1 ][fill={rgb, 255:red, 208; green, 2; blue, 27 } ,fill opacity=1 ][line width=1.5] (0, 0) circle [x radius= 4.36, y radius= 4.36] ; 
[id:da6495024226496728] \draw [color={rgb, 255:red, 208; green, 2; blue, 27 } ,draw opacity=1 ][line width=1.5] [dash pattern={on 1.69pt off 2.76pt}] (215.5,225) -- (195.5,217) ; \draw [shift={(215.5,225)}, rotate = 201.8] [color={rgb, 255:red, 208; green, 2; blue, 27 } ,draw opacity=1 ][fill={rgb, 255:red, 208; green, 2; blue, 27 } ,fill opacity=1 ][line width=1.5] (0, 0) circle [x radius= 4.36, y radius= 4.36] ; 
[id:da8105087479207714] \draw [color={rgb, 255:red, 208; green, 2; blue, 27 } ,draw opacity=1 ][line width=1.5] [dash pattern={on 1.69pt off 2.76pt}] (206.5,288) -- (206,313) ; \draw [shift={(206.5,288)}, rotate = 91.15] [color={rgb, 255:red, 208; green, 2; blue, 27 } ,draw opacity=1 ][fill={rgb, 255:red, 208; green, 2; blue, 27 } ,fill opacity=1 ][line width=1.5] (0, 0) circle [x radius= 4.36, y radius= 4.36] ; 
[id:da8667704012615938] \draw [color={rgb, 255:red, 208; green, 2; blue, 27 } ,draw opacity=1 ][line width=1.5] [dash pattern={on 1.69pt off 2.76pt}] (251.5,287) -- (251,312) ; \draw [shift={(251.5,287)}, rotate = 91.15] [color={rgb, 255:red, 208; green, 2; blue, 27 } ,draw opacity=1 ][fill={rgb, 255:red, 208; green, 2; blue, 27 } ,fill opacity=1 ][line width=1.5] (0, 0) circle [x radius= 4.36, y radius= 4.36] ; 
[id:da8082658872577388] \draw [color={rgb, 255:red, 208; green, 2; blue, 27 } ,draw opacity=1 ][line width=1.5] [dash pattern={on 1.69pt off 2.76pt}] (320.5,287) -- (320.5,309.5) ; \draw [shift={(320.5,287)}, rotate = 90] [color={rgb, 255:red, 208; green, 2; blue, 27 } ,draw opacity=1 ][fill={rgb, 255:red, 208; green, 2; blue, 27 } ,fill opacity=1 ][line width=1.5] (0, 0) circle [x radius= 4.36, y radius= 4.36] ; 
[id:da9588349820970687] \draw [color={rgb, 255:red, 208; green, 2; blue, 27 } ,draw opacity=1 ][line width=1.5] [dash pattern={on 1.69pt off 2.76pt}] (380.5,288) -- (381,308) ; \draw [shift={(380.5,288)}, rotate = 88.57] [color={rgb, 255:red, 208; green, 2; blue, 27 } ,draw opacity=1 ][fill={rgb, 255:red, 208; green, 2; blue, 27 } ,fill opacity=1 ][line width=1.5] (0, 0) circle [x radius= 4.36, y radius= 4.36] ; 
[id:da868071559790941] \draw [color={rgb, 255:red, 208; green, 2; blue, 27 } ,draw opacity=1 ][line width=1.5] [dash pattern={on 1.69pt off 2.76pt}] (332,67) -- (310.5,76) ; \draw [shift={(310.5,76)}, rotate = 157.29] [color={rgb, 255:red, 208; green, 2; blue, 27 } ,draw opacity=1 ][fill={rgb, 255:red, 208; green, 2; blue, 27 } ,fill opacity=1 ][line width=1.5] (0, 0) circle [x radius= 4.36, y radius= 4.36] ; 
[id:da3031956738409429] \draw [color={rgb, 255:red, 208; green, 2; blue, 27 } ,draw opacity=1 ][line width=1.5] [dash pattern={on 1.69pt off 2.76pt}] (358,114) -- (333.5,124) ; \draw [shift={(333.5,124)}, rotate = 157.8] [color={rgb, 255:red, 208; green, 2; blue, 27 } ,draw opacity=1 ][fill={rgb, 255:red, 208; green, 2; blue, 27 } ,fill opacity=1 ][line width=1.5] (0, 0) circle [x radius= 4.36, y radius= 4.36] ; 
[id:da21610131472906757] \draw [color={rgb, 255:red, 208; green, 2; blue, 27 } ,draw opacity=1 ][line width=1.5] [dash pattern={on 1.69pt off 2.76pt}] (391,175) -- (368.5,186) ; \draw [shift={(368.5,186)}, rotate = 153.95] [color={rgb, 255:red, 208; green, 2; blue, 27 } ,draw opacity=1 ][fill={rgb, 255:red, 208; green, 2; blue, 27 } ,fill opacity=1 ][line width=1.5] (0, 0) circle [x radius= 4.36, y radius= 4.36] ; 
[id:da7666526733403244] \draw [color={rgb, 255:red, 208; green, 2; blue, 27 } ,draw opacity=1 ][line width=1.5] [dash pattern={on 1.69pt off 2.76pt}] (426,226) -- (400.5,238) ; \draw [shift={(400.5,238)}, rotate = 154.8] [color={rgb, 255:red, 208; green, 2; blue, 27 } ,draw opacity=1 ][fill={rgb, 255:red, 208; green, 2; blue, 27 } ,fill opacity=1 ][line width=1.5] (0, 0) circle [x radius= 4.36, y radius= 4.36] ; 
\draw (300,38) node [anchor=north west][inner sep=0.75pt] [align=left] {$e_1$}; 
\draw (435,273) node [anchor=north west][inner sep=0.75pt] [align=left] {$e_2$};
Text Node \draw (348.25,139.45) node [anchor=north west][inner sep=0.75pt] [font=\large,rotate=-344.5,xslant=-0.29] [align=left] {$V_j$};
\end{tikzpicture}
     \caption{Hybrid degeneration after performing small blow up on the face $F_1$}
\label{fig:Hybriddegeneration}
\end{figure}

For the base case start by blowing up the double points mapping to edges of $F_1$ by an amount $t_1$. To construct the map $f_1$ we start with the obvious inclusion $i:Q^{(0)}\setminus\partial_1Q^{(0)}\hookrightarrow Q^{(1)}\setminus \partial_1Q^{(1)}$, illustrated in Figure~\ref{fig:inclusion}. We have the identification $f_0: Q^{(0)} \xrightarrow{=} \Delta^{(0)}$. So we take $f_1\circ i = f_0$.

For a $j\neq 1$ such that the face $\partial_jQ^{(1)}$ meets $\partial_1Q^{(1)}$ we extend $f_1$ to all of $\partial_jQ^{(1)}$ by the following procedure. Consider inside $F_1$ the polygon $p_1\subset F_1$ whose edges are monodromy invariant segments through the nodes (e.g., the polygon defined by the union of the green line segments in Figure~\ref{fig:Hybriddegeneration}). Let $V_j\subset F_1$ be the polygon cut out by the segment $\ell_{1,j}=\partial_jQ^{(0)}\cap F_1$, the edge $\ell_2$ of $p_1$ parallel to $\ell_{1,j}$ and the segments $e_1,e_2$ connecting the vertices of $\ell_{1,j}$ with those of $\ell_2$ (See Figure~\ref{fig:Hybriddegeneration}).

\tikzset{every picture/.style={line width=0.75pt}} 
\begin{figure}[H]
    \centering
    
    \caption{Construction of $f_1$}
    \label{fig:f_0}
\input{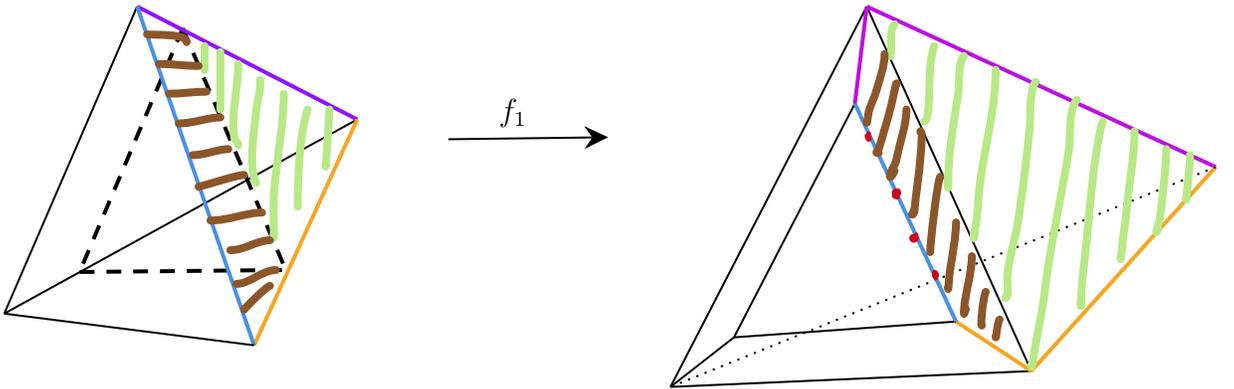}
\end{figure}
Consider the union $U_j= f_0(\partial_jQ^{(0)})\cup V_j\subset \Delta^{(k)}$. We show now that $f_1$ extends uniquely to an affine isomorphism $\partial_jQ^{(1)}\to U_j$.
By Lemma \ref{lmNAffine} the nodal piecewise integral affine structure on $U_j$  is integral affine with no nodes.  Since $U_j$ is simply connected there is a developing map $D_j:U_j \rightarrow \R^2$. Consider the composition map $D_j \circ f_1$ defined thus far on $i(\partial_jQ^{(0)})$ and extend to $\partial_jQ^{(1)}$ linearly.
Call this extended map $\widetilde{\left(D_j \circ f_0\right)}: \partial_j Q^{(1)} \rightarrow \R^2$. We claim the image of this map is exactly $D_j(U_j)$. Namely, we argue that the two images are bounded by the same lines in the affine space.  For this we first observe that, by Lemma~\ref{Lemma:Baselocusintersectsallstrata} that the base locus intersects each codimension 1 strata and hence after performing the small blow-up on each of the edges of $F_1$ in the hybrid degeneration,  each edge of $p_1$ is parallel to an edge in the boundary of $F_1$ and is an affine distance of $t_1$ away from it. Furthermore, by the description of the integral affine structure near the vertices as in Definition~\ref{dfCanIntAff}, it follows that the segments $e_1,e_2$ connect smoothly with edges of $F_j$ incident on the vertices $v_1,v_2$ of $\ell_{1,j}= F_j\cap F_1$. In other words, the union of $e_i$ with the edges of $F_j$ incident on $v_j$ is a straight line in this affine chart. Let us denote these two lines by $\tilde{e}_1$ and $\tilde{e}_2$ respectively. Now, $f_0$ maps the boundary edges of $\partial_jQ^{(0)}$ that transversely intersect $\partial_1Q^{(0)}$ into the lines $\tilde{e}_1$ and $\tilde{e}_2$. This is depicted by the images of the edges labelled purple and orange in Figure~\ref{fig:f_0}. The proof that $D_j(U_j) = \widetilde{\left(D_j \circ f_0\right)}(\partial_j Q^{(1)})$ is then complete if we show that $\widetilde{\left(D_j \circ f_0\right)}(\partial_1 Q^{(1)} \cap \partial_j Q^{(1)})$ maps to the affine line $D_j (\ell_2)$ (denoted by the blue edges in Figure~\ref{fig:f_0}). We prove this by showing that the above two affine lines are the level sets of an affine function at the same level.  
Consider the following two affine functions:
\begin{align*}
    H_1: \partial_j Q^{(1)} &\longrightarrow \R \\
    x &\mapsto \langle x,\nu_1\rangle,
\end{align*}
and,
\begin{align*}
   G_1:=\begin{cases}
    \langle x,(f_0)_*\nu_1\rangle \text{~~if $x\in f_0(\partial_jQ^{(0)})$} \\
\langle x,\vec{\ell}_{1,j}\rangle \text{~~if $x \in V_j$}, 
\end{cases} 
\end{align*}
where $\vec{\ell}_{1,j}$ denotes the primitive integral normal to the line $\ell_{1,j}$. We take the direction of $\vec{\ell}_{1,j}$ to be in the same direction as primitive normal to the level sets of $\langle \cdot,(f_0)_*\nu_1\rangle$. Note that the line $\partial_1 Q^{(1)} \cap \partial_j Q^{(1)}$ is defined by $H_1^{-1}(b_1+t_1)$. With these choices, it follows that $G_1$ is an integral affine function on 
$U_j$
having the monodromy invariant line $\ell_2$ as a level set with value $b_1 + t_1$. Pushing forward the maps $H_1$ and $G_1$ to $\R^2$ by $\widetilde{\left(D_j \circ f_0\right)}$ and $D_j$ respectively, we see that the pushforward maps agree on the overlap $D_j(f_0(\partial_jQ^{(0)})) = \widetilde{\left(D_j \circ f_0\right)}(\partial_jQ^{(0)})$. As the two maps are integral affine maps on $\R^2$ that agree on an open set, they agree everywhere and hence their level sets at $b_1+t_1$ is the same set. This completes the proof that $\widetilde{\left(D_j \circ f_0\right)}(\partial_1 Q^{(1)} \cap \partial_j Q^{(1)})$ maps to the affine line $D_j (\ell_2)$.  
 
It follows that after blowing up $F_1$, the integral affine structure in a neighborhood of $F_1$, is the required one and the map $f_1$ is defined as follows: $f_1(x) := (D_j)^{-1}\circ \widetilde{\left(D_j \circ f_0\right)}(x)$.

Repeating this for all $j\neq 1$ such that the face $\partial_jQ^{(1)}$ meets $\partial_1Q^{(1)}$, this extends $f_1$ to all of $Q^(1)\setminus\partial_1Q^{(1)}$. But then $\partial_1Q^{(1)}$ maps onto $p_1$ in the obvious way. The map $f_1$ thus defined satisfies the first two inductive assumptions by construction. It satisfies the third since the edges in $\partial_1Q^{(1)}$ map to the edges of $p_1$ each of which contains a node as already noted.

Suppose inductively we have proven the claim for $k-1$. To deduce it for $k$ observe the face $F_k$ is bordered by segments which either have singularities that have not yet been resolved, or which contain nodes. The effect of resolving the singularities is to push all the nodes into the interior of $F_k$. Indeed, for the new nodes this is by definition and Lemma \ref{lmIntersection}. For the nodes from the previous step, note that by Lemma \ref{lmIntersection} we are decreasing the size of the exceptional curve by an amount $t_k$. Let $p_k$ be the polygon formed by monodromy invariant segments through the nodes inside $F_k$. Then $p_k$ has affine distance $t_k$ from $\partial F_k$ and the rest of the proof proceeds as in the base case.
\end{proof}

\subsection{Proof of Theorem \ref{thmToricKulikov}}
\begin{proof}[Proof  of Theorem \ref{thmToricKulikov}]
    Start with a symplectic form $\omega_0$ on $X$ inducing a given form on the fiber $V$.  Pick any smooth anti-canonical divisor $H$ which intersects the toric divisors transversely. Construct $E_{sb}$ as in the discussion preceding Lemma  \ref{lmIntersection}. By Lemma \ref{lmArbitraryToricClass} we can find a symplectic form $\omega$ on $E_{sb}$ taming $J$ and whose restriction to $H$ is cohomologous to that of $\omega_0$. Two cohomologous forms which tame the same $J$ are symplectomorphic. By Corollary \ref{CYToriKulikov} $(E_{sb},\omega)$ is a symplectic Kulikov model. 

    The given hypersurface $V$ can be connected to $H$ by a path of smooth anti-canonical hypersurfaces. Thus $(V,\omega_0|_V)$ is symplectomorphic to $(H,\omega_0|_H)$ by symplectic parallel transport. The latter is symplectomorphic to the fiber $(H,\omega|_H)$ by the argument above. Thus $V$ admits a Kulikov model by definition. 

    The claim concerning the integral affine structure is the content of Proposition \ref{prpGSIAStr}.
\end{proof}

\bibliographystyle{amsalpha}
\bibliography{bibliography}
\end{document}